\numberwithin{equation}{section}
\theoremstyle{plain}
\newtheorem{Prop}{Proposition}[section]
\newtheorem{Thm}[Prop]{Theorem}
\newtheorem*{Thm*}{Theorem}
\newtheorem{Lem}[Prop]{Lemma}
\newtheorem{Cor}[Prop]{Corollary}
\theoremstyle{definition}
\newtheorem{Def}[Prop]{Definition}
\theoremstyle{remark}
\newtheorem{Rem}[Prop]{Remark}
\def\vint_#1{\mathchoice%
          {\mathop{\kern 0.2em\vrule width 0.6em height 0.69678ex
depth -0.58065ex
                  \kern -0.8em \intop}\nolimits_{\kern -0.4em#1}}%
          {\mathop{\kern 0.1em\vrule width 0.5em height 0.69678ex
depth -0.60387ex
                  \kern -0.6em \intop}\nolimits_{#1}}%
          {\mathop{\kern 0.1em\vrule width 0.5em height 0.69678ex
              depth -0.60387ex
                  \kern -0.6em \intop}\nolimits_{#1}}%
          {\mathop{\kern 0.1em\vrule width 0.5em height 0.69678ex
depth -0.60387ex
                  \kern -0.6em \intop}\nolimits_{#1}}}
\def\vintslides_#1{\mathchoice%
          {\mathop{\kern 0.1em\vrule width 0.5em height 0.697ex depth -0.581ex
                  \kern -0.6em \intop}\nolimits_{\kern -0.4em#1}}%
          {\mathop{\kern 0.1em\vrule width 0.3em height 0.697ex depth -0.604ex
                  \kern -0.4em \intop}\nolimits_{#1}}%
          {\mathop{\kern 0.1em\vrule width 0.3em height 0.697ex depth -0.604ex
                  \kern -0.4em \intop}\nolimits_{#1}}%
          {\mathop{\kern 0.1em\vrule width 0.3em height 0.697ex depth -0.604ex
                  \kern -0.4em \intop}\nolimits_{#1}}}
\newcommand{\intav}{\vint}
\newcommand{\aveint}[2]{\mathchoice
          {\mathop{\kern 0.2em\vrule width 0.6em height 0.69678ex
depth -0.58065ex
                  \kern -0.8em \intop}\nolimits_{\kern -0.45em#1}^{#2}}%
          {\mathop{\kern 0.1em\vrule width 0.5em height 0.69678ex
depth -0.60387ex
                  \kern -0.6em \intop}\nolimits_{#1}^{#2}}%
          {\mathop{\kern 0.1em\vrule width 0.5em height 0.69678ex
depth -0.60387ex
                  \kern -0.6em \intop}\nolimits_{#1}^{#2}}%
          {\mathop{\kern 0.1em\vrule width 0.5em height 0.69678ex
depth -0.60387ex
                  \kern -0.6em \intop}\nolimits_{#1}^{#2}}}
\DeclareMathOperator{\spn}{span}
\DeclareMathOperator{\supp}{supp}
\DeclareMathOperator{\dv}{div}
\DeclareMathOperator{\osc}{osc}
\DeclareMathAlphabet{\mathsfit}{T1}{\sfdefault}{\mddefault}{\sldefault}
\SetMathAlphabet{\mathsfit}{bold}{T1}{\sfdefault}{\bfdefault}{\sldefault}
\newcommand{\set}[2]{\left\{#1 : #2\right\}}
\newcommand{\emp}{\emptyset}
\newcommand{\sub}{\subseteq}
\newcommand{\N}{\mathbb{N}}
\newcommand{\R}{\mathbb{R}}
\newcommand{\del}{\partial}
\newcommand{\eps}{\varepsilon}
\newcommand{\inv}[1]{{#1}^{-1}}
\newcommand{\dx}{\, dx}
\newcommand{\loc}{\text{\rm loc}}
\newcommand{\Om}{\Omega}
\newcommand{\inp}[2]{\big\langle #1,#2\big\rangle}
\newcommand{\vertiii}[1]{{\left\vert\kern-0.25ex\left\vert\kern-0.25ex\left\vert #1 
    \right\vert\kern-0.25ex\right\vert\kern-0.25ex\right\vert}}
\newcommand{\gr}{\nabla}
\newcommand{\wto}{\rightharpoonup}
\newcommand{\X}{\mathfrak{X}}
\newcommand{\Xu}{\X u}
\newcommand{\XX}{\X\X}
\newcommand{\dvh}{\dv_{\!_H}}
\newcommand{\F}{\textsc{F}}
\newcommand{\weight}{\F(|\X u|)}
\newcommand{\LL}{\mathcal{L}}
\newcommand{\tvr}{\textsl{v}}
\newcommand{\subsob}{\mathcal{S}}
\title[H\"ormander vector fields of step two]
{Regularity of quasi-linear equations with H\"ormander vector fields of step two}
\author[Giovanna Citti]{Giovanna Citti}
\address[G.\ Citti]{Department of Mathematics, 
University of Bologna, Piazza di Porta San Donato 5,
  40126 - Bologna (BO), Italy.}
\email{giovanna.citti@unibo.it}
\author[Shirsho Mukherjee]{Shirsho Mukherjee}
\address[S.\ Mukherjee]{Department of Mathematics, 
University of Bologna, Piazza di Porta San Donato 5,
  40126 - Bologna (BO), Italy.}
\email{shirsho.mukherjee2@unibo.it}
\subjclass[2020]{Primary 35H20, 35J92, 35B65} \keywords{Sub-elliptic equations, quasilinear,
$p$-Laplacian, regularity}
\date{\today}
\begin{document}

\begin{abstract}
If the smooth vector fields $X_1,\ldots,X_m$ and their commutators span the tangent space at every point in $\Om\sub \R^N$
for any fixed $m\leq N$, then we establish the full interior regularity theory of quasi-linear equations $\sum_{i=1}^m X_i^*A_i(X_1u, \ldots,X_mu)=  0$ with $p$-Laplacian type growth condition. In other words, we show that a weak solution of the equation
is locally $C^{1,\alpha}$. 
\end{abstract}

\maketitle
\setcounter{tocdepth}{1}
\phantomsection
\tableofcontents

\section{Introduction}\label{sec:Introduction}

The development of partial differential equations arising from families of noncommuting vector fields has been conspicuous since the fundamental work of  H\"ormander \cite{Hor}. Such equations, nowadays referred to as {\it sub-elliptic equations}, are not elliptic in general. Nevertheless, they have a remarkable structural similarity with elliptic equations. This has been evident in the theory that sub-elliptic linear equations entail, where the given non-commuting vector fields $X_1,\ldots,X_m$ on a domain $\Om\sub\R^N$ satisfy the, so-called, H\"ormander's rank condition 
\begin{equation}\label{eq:horcond}
\dim\big(\texttt{Lie}(X_1,\ldots,X_m)\big)=N, 
\end{equation}
at every point in $\Om$. It  was first used in \cite{Hor} to show that the linear operator $\LL u=\sum_{i=1}^m X_i^*X_i u$, called sub-elliptic Laplacian or {\it sub-Laplacian}, is hypoelliptic. Later, it was found that it also satisfies estimates of the form 
$$ \|u\|_{\subsob^{k+2,p}} \leq c(N,p) \big( \|\LL u\|_{\subsob^{k,p}} + \| u\|_{\subsob^{k,p}}\big), $$
where $\subsob^{k,p}$ are Sobolev spaces defined by the vector fields (see Section \ref{sec:Notations and Preliminaries} for details), which clearly shows the resemblance to the corresponding elliptic estimates. This has been unveiled at increasing levels of generality by Folland-Stein \cite{Folland-Stein}, Folland \cite{Folland} and Rothschild-Stein \cite{Roth-Stein}. Their methods also reflect the importance of the special case when the vector fields are left-invariant with respect to a Nilpotent Lie group. The H\"ormander's condition \eqref{eq:horcond} endows a stratification on the Lie algebra of such groups and they are called Carnot Groups (see \cite{Bonfig-Lanco-Ugu}). Later, it has been shown by Nagel-Stein-Wainger \cite{Nag-Stein-Wain} (see also S\'anchez-Calle \cite{Calle}) that in the general case, the Carnot-Carath\`eodory metric is equivalent to the natural metrics intrinsic to the given vector fields, that are generalizations of the homogeneous metrics of Carnot groups. 

These subsequently lead to the development of an extensive literature and nowadays, the theory of linear sub-elliptic equations is largely established. 
It naturally leads to the inquiry for a regularity theory of quasi-linear equations in the sub-elliptic setting, similar to that of quasi-linear elliptic equations. This area has remained relatively open over the years and the present paper is aimed towards this pursuit.

Let $\Om \subset \R^N$ be an open, simply connected bounded domain and $X_1,\ldots,X_m$ be smooth vector fields with $m\leq N$ such that they, along with their commutators, span the tangent space at every point in $\Om$. This is a special case of \eqref{eq:horcond}, referred to as the ``step two hypothesis" by Nagel-Stein \cite{Nag-Stein}, where commutators up to second-order are considered in place of the full Lie algebra generated by the vector fields (see \eqref{eq:step2}). 
We consider the equation 
\begin{equation}\label{eq:peq}
\dvh ( A(\X u)) =\sum_{i=1}^m X_i^*A_i(\X u)=  0 \qquad \text{in}\ \Om,
\end{equation}
where $\X u = (X_1u, \ldots, X_m u)$ is the sub-elliptic gradient, $X_i^*$ is the formal adjoint of $X_i$ and 
$A: \R^m\to \R^m$ is a given $C^1$-function that satisfies the following structure condition,
\begin{equation}\label{eq:pstr}
 \begin{aligned}
(\delta+|z|^2)^\frac{p-2}{2}|\xi|^2 \leq \,&\inp{DA(z)\,\xi}{\xi}\leq L(\delta+|z|^2)^\frac{p-2}{2}|\xi|^2;\\
 &|A(z)|\leq L\,|z|(\delta+|z|^2)^\frac{p-2}{2},
\end{aligned} 
\end{equation}
for every $z,\xi\in \R^m$, with $1<p<\infty, L\geq 1$ and $\delta\geq 0$. 
The equation \eqref{eq:peq} with \eqref{eq:pstr} is modelled on the degenerated sub-elliptic equation
\begin{equation}\label{eq:Hplap}
\dvh \big((\delta+|\X u|^2)^\frac{p-2}{2}\X u\big) = 0,
\end{equation}
which, for $\delta=0$, is called the sub-elliptic $p$-Laplacian equation $\dvh(|\X u|^{p-2}\X u)=0$.   

In the Euclidean setting, it is  known that weak solutions of the classical $p$-Laplacian equation $\dv(|\gr u|^{p-2}\gr u)=0$ are locally $C^{1,\alpha}$, as shown independently by DiBenedetto \cite{Dib}, 
Lewis \cite{Lewis} and Tolksdorf \cite{Tolk} using techniques descending from the seminal work of De Giorgi \cite{DeG}. Hence, for the sub-elliptic case, $C^{1,\alpha}$-regularity of weak solutions with respect to the metrics introduced in \cite{Nag-Stein-Wain} is likewise expected. The sub-elliptic Poincar\'e inequality was first proved by Jerison \cite{Jerison} which leads to the Sobolev embedding theorem which has been later shown in different 
levels of generality by Garofalo-Nhieu \cite{Gar-Nhi}, Franchi-Lu-Wheeden \cite{Fran-Lu-Wheed}, Hajlasz-Koskela \cite{Haj-Kos}, etc.
However, obtaining higher regularity of the gradient $\X u$ using the classical techniques is quite difficult even in the case of step two, due to the non-commutativity of the vector fields. 
For previous results and similar theories in this direction, we refer to \cite{Cap--reg,C-D-G,Cap-Garo,Foglein,Dom-Man--reg,Dom-Man--cordes,Marchi,Man-Min,Dom, Min-Z-Zhong, Ric, Cap-Citti-LD-Otta} and references therein. Recently, the problem has been resolved for the case of the Heisenberg Group (see \cite{Zhong, Muk-Zhong}), which is the most trivial example of a step two Carnot group where the vector space of commutators is one dimensional. The case of non-Nilpotent groups like $\mathbb{SU}(3)$ and compact semi-simple Lie groups has been considered in \cite{Dom-Man--su3}. 
In this paper, we prove the $C^{1,\alpha}$-regularity for the most general step two case. 

First, we show that the sub-elliptic gradient is locally bounded as our first main result. 
\begin{Thm}\label{thm:mainthm1}
 If $ u \in \subsob^{1,p}(\Om)$ is a weak solution of equation \eqref{eq:peq}
 equipped with the structure condition \eqref{eq:pstr} for any $1<p<\infty$ and $\delta\geq 0$, then 
 $\X u \in L^\infty_\loc(\Om, \R^{m})$. Moreover, for any $x_0\in \Om$ and metric ball $B_r=B_r(x_0) \subset \Om $, we have 
 \begin{equation}\label{eq:locbound}
  \sup_{B_{\sigma r}}\ |\X u|  \leq \frac{c}{(1-\sigma)^{Q/p}}\bigg(\intav_{B_r}\big(\delta+|\X u|^2\big)^\frac{p}{2}\dx \bigg)^\frac{1}{p}
 \end{equation}
 for any $0<\sigma<1$, where $c = c(N,p,L) > 0 $ and $Q=Q(x_0)= \liminf_{s\to 0^+} \frac{\log(|B_s(x_0)|)}{\log(s)}$. 
\end{Thm}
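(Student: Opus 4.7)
The plan is to reduce \eqref{eq:locbound} to a Caccioppoli-type inequality for $V:=(\delta+|\X u|^{2})^{1/2}$ of the form
\[
\int_{B_{r'}}\eta^{2}\,\bigl|\X(V^{(p+2\beta)/2})\bigr|^{2}\dx\ \leq\ \frac{c\,(1+\beta)^{c}}{(r-r')^{2}}\int_{B_{r}}V^{p+2\beta}\dx,
\]
valid for every $\beta\geq 0$ and every pair of concentric balls $B_{r'}\subset B_{r}\subset\Om$. Combined with the sub-elliptic Sobolev embedding at the local homogeneous dimension $Q=Q(x_{0})$ (giving gain exponent $\kappa=Q/(Q-p)$ for $p<Q$), this yields a reverse H\"older inequality whose Moser iteration along geometrically shrinking radii from $r$ to $\sigma r$ sums $\sum_{k\geq 0}\kappa^{-k}$ to exactly the exponent $Q/p$ appearing in \eqref{eq:locbound}; the case $p\geq Q$ is handled by a standard variant of the same iteration.

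To derive the Caccioppoli inequality, I would first justify $X_{j}X_{k}u\in L^{2}_{\loc}$ for all $j,k$, achieved by regularising \eqref{eq:peq} (e.g.\ replacing \eqref{eq:pstr} by a non-degenerate variant with $\delta\to\delta+\eps$ and perhaps adding $\eps\lap u$), invoking linear sub-elliptic theory for the resulting smooth equation, and passing to the limit with uniform $L^{p}$ estimates. Once this is legitimate, I differentiate \eqref{eq:peq} in a horizontal direction $X_{k}$, obtaining a linear sub-elliptic equation in $X_{k}u$ whose coefficients $\partial_{j}A_{i}(\X u)$ satisfy the ellipticity bounds in \eqref{eq:pstr}, and test this equation against $\eta^{2}V^{2\beta}X_{k}u$ with $\eta\in C^{\infty}_{c}(B_{r})$. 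Summed over $k$, the lower bound in \eqref{eq:pstr} yields the good second-order quantity $\int\eta^{2}V^{p-2+2\beta}|\XX u|^{2}\dx$, which controls the left-hand side of the Caccioppoli inequality up to a combinatorial factor in $\beta$.

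The crux, and the principal obstacle, is that when $X_{k}$ is commuted past $X_{i}^{*}$ and past $X_{j}$, the defects are ``vertical'' directional derivatives of the form $[X_{p},X_{q}]u$. Under the step-two hypothesis \eqref{eq:step2} these commutators are the only new directions one needs, and $|[X_{p},X_{q}]u|$ is pointwise bounded by $|\XX u|$ — exactly the order of the good term on the left. Pointwise absorption by Young's inequality is therefore forbidden, and unlike the Heisenberg case of \cite{Zhong,Muk-Zhong} (where the vertical space is one-dimensional and left-invariance permits substantial simplifications) one must here deal with an entire, generally non-invariant, family of such vertical directions. The plan is to integrate by parts once more, transferring one horizontal derivative from the second-order term in $u$ onto the test function $\eta^{2}V^{2\beta}X_{k}u$; this produces (i)~absorbable copies of $\int\eta^{2}V^{p-2+2\beta}|\XX u|^{2}\dx$, (ii)~lower-order horizontal derivatives of the structural coefficients $\partial_{j}A_{i}(\X u)$, controlled by smoothness of the $X_{i}$'s together with \eqref{eq:pstr}, and (iii)~cutoff contributions of the type $\int|\X\eta|^{2}V^{p+2\beta}\dx$ which feed the right-hand side. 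Tracking and balancing all these commutator-generated terms uniformly in $\beta$, while preserving the correct $p$-homogeneity, is the most technical step of the proof and is precisely where the full step-two structural assumption is essentially used.
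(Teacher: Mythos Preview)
Your overall architecture (Caccioppoli for $V^{p+2\beta}$, Moser iteration, regularisation to justify the a priori computations) matches the paper, but there is a genuine gap in your handling of the commutator terms. After your ``one more integration by parts'' on the terms carrying $[X_p,X_q]u$, the contributions you list as (i)--(iii) do not exhaust what appears. Concretely, the cross term $\int\eta^{2}\,\F(|\X u|)\,|\X u|^{\beta}\,|\XX u|\cdot|[X_p,X_q]u|$ splits via Young into an absorbable $\eps\int\eta^{2}\F|\X u|^{\beta}|\XX u|^{2}$ \emph{plus} a residual
\[
\frac{c(\beta+1)^{4}}{\eps}\int\eta^{2}\,\F(|\X u|)\,|\X u|^{\beta}\,|Tu|^{2}\,dx,
\]
which is not a cutoff term, is not ``lower order'' in any usable sense, and is not absorbable: since $|Tu|\le c|\XX u|$, it is of the same order as the left-hand side but with a coefficient that blows up in $\beta$. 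This is exactly the obstruction that makes the step-two problem nontrivial, and your proposal does not address it.

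The paper closes this gap by a separate device: it derives a \emph{reverse-type} inequality (Lemma~\ref{lem:rev} and Corollary~\ref{cor:rep}) by testing the differentiated equation with $\eta^{\beta+2}|Tu|^{\beta}X_{l}u$, which yields
\[
\int\eta^{\beta+2}\F|Tu|^{\beta}|\XX u|^{2}\ \le\ c^{\beta/2}(\beta+1)^{2\beta}K_{\eta}^{\beta/2}\int\eta^{2}\F|\X u|^{\beta}|\XX u|^{2},
\]
i.e.\ powers of $|Tu|$ can be traded for powers of $|\X u|$ at the cost of extra cutoff factors. This is then combined with a H\"older/Young interpolation (Lemma~\ref{lem:est Xu}) to eliminate the dangerous $|Tu|^{2}$ term and obtain the clean Caccioppoli estimate you stated. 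As a secondary point, your regularisation sketch (``perhaps adding $\eps\Delta u$'') is in the right spirit but vague; the paper uses the Riemannian approximation $\{X_1,\dots,X_m,\eps T_1,\dots,\eps T_n\}$, which makes the approximating equation genuinely elliptic on $\R^{N}$ (so the a priori regularity is automatic), while keeping the step-two commutator structure intact so that the Caccioppoli constants are uniform in $\eps$.
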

The following Corollary is an easy consequence of Theorem \ref{thm:mainthm1}. 
\begin{Cor}\label{cor:1lip}
 If $ u \in \subsob^{1,p}(\Om)$ is a weak solution of equation \eqref{eq:peq}
 equipped with the structure condition \eqref{eq:pstr} for any $1<p<\infty$ and $\delta\geq 0$, then 
 $ u \in C^{0,1}_\loc(\Om)$. Moreover, for any metric ball $B_r \subset \Om $, we have 
 \begin{equation}\label{eq:1lip}
  |u(x)-u(y)| \leq c\bigg(\intav_{B_r}\big(\delta+|\X u|^2\big)^\frac{p}{2}\dx \bigg)^\frac{1}{p} d_c(x,y)
 \end{equation}
 for any $x,y\in B_r$, where $c = c(N,p,L) > 0 $ and $d_c$ is the Carnot-Carath\`eodory metric. 
\end{Cor}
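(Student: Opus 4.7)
The plan is to combine the $L^\infty$-bound on $\X u$ provided by Theorem \ref{thm:mainthm1} with the intrinsic characterization of the Carnot-Carath\'eodory distance. Recall that $d_c(x,y)$ equals the infimum of lengths $T>0$ of horizontal curves $\gamma : [0,T] \to \Om$ connecting $x$ to $y$, i.e.\ absolutely continuous curves satisfying $\dot\gamma(t) = \sum_{i=1}^m h_i(t)\, X_i(\gamma(t))$ a.e., with $\sum_i h_i(t)^2 \leq 1$.

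First, I would apply Theorem \ref{thm:mainthm1} with a fixed choice of $\sigma$ (say $\sigma = 1/2$, absorbing the factor $(1-\sigma)^{-Q/p}$ into the constant $c$) to pass from the average over $B_r$ to a pointwise bound on a slightly smaller ball. Up to enlarging $B_r$ (or restricting the pair $x, y$ to a proper subball so that some near-optimal horizontal curve joining them remains in the region where the sup bound holds), Theorem \ref{thm:mainthm1} gives
$$ |\X u| \leq c\,\bigg(\intav_{B_r}(\delta+|\X u|^2)^{p/2}\dx\bigg)^{1/p} $$
almost everywhere along any such curve.

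Second, I would use the path-integral formula for smooth approximations. Let $u_\eps$ denote Euclidean mollifications of $u$. For any horizontal curve $\gamma : [0,T] \to \Om$ from $x$ to $y$ with control $h = (h_1,\ldots,h_m)$, the fundamental theorem of calculus applied to $u_\eps \circ \gamma$ yields
$$ |u_\eps(y) - u_\eps(x)| = \bigg|\int_0^T \sum_{i=1}^m h_i(t)\, X_i u_\eps(\gamma(t))\,dt\bigg| \leq T\cdot\|\X u_\eps\|_{L^\infty(\gamma([0,T]))}. $$
Passing to the limit $\eps \to 0$, then taking the infimum over admissible $\gamma$ to recover $d_c(x,y)$, gives the stated inequality.

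The main obstacle is making the passage from Sobolev regularity of $u$ to absolute continuity along horizontal curves rigorous. Euclidean mollification does not commute with the variable-coefficient operators $X_i$, producing a commutator term; one must verify via Friedrichs' lemma that $\|\X u_\eps\|_{L^\infty}$ on a slightly smaller ball is bounded, up to a multiplicative constant, by $\|\X u\|_{L^\infty}$ on the original ball, uniformly in $\eps$. Alternatively, one may invoke an ACL-type characterization of sub-elliptic Sobolev functions (e.g.\ in the spirit of Franchi--Serapioni--Serra Cassano) to directly obtain absolute continuity of $u$ along a.e.\ horizontal curve, bypassing the approximation step entirely. Either way the conclusion reduces to a one-line computation, justifying the ``easy consequence'' characterization.
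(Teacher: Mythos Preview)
Your proposal is correct and is precisely the natural argument the paper has in mind: the paper gives no proof at all for Corollary~\ref{cor:1lip}, merely declaring it ``an easy consequence of Theorem~\ref{thm:mainthm1}'', and the route you outline (local $L^\infty$-bound on $\X u$ combined with integration along near-geodesic horizontal curves, with the Friedrichs/ACL caveat you correctly flag) is the standard way to unpack that remark.
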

Second, we show that the sub-elliptic gradient is locally H\"older continuous with respect to the Carnot-Carath\`eodory metric. 
The following theorem is the second main result of this paper. 
\begin{Thm}\label{thm:mainthm2}
 If $ u \in \subsob^{1,p}(\Om)$ is a weak solution of equation \eqref{eq:peq}
 equipped with the structure condition \eqref{eq:pstr} for any $1<p<\infty$ and $\delta\geq 0$, then there exists a positive $\alpha=\alpha(N,p,L)\leq 1$ such that 
 $\X u \in C^{0,\alpha}_\loc(\Om, \R^{m})$. Moreover, for any concentric metric balls $B_r\subset B_{r_0} \subset \Om $, we have 
 \begin{equation}\label{eq:locboundholder}
  \max_{1\le i \le m} \osc_{B_r} X_i u
\le
c\Big(\frac{r}{r_0}\Big)^\alpha\bigg(\intav_{B_{r_0}}\big(\delta+|\X u|^2\big)^\frac{p}{2}\dx \bigg)^\frac{1}{p}
 \end{equation}
 for any $0<r<r_0$, where $c = c(N,p,L) > 0 $ is a constant. 
\end{Thm}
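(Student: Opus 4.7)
The plan is to establish a geometric oscillation-decay estimate for each horizontal derivative, namely to show that for some $\tau,\lambda\in(0,1)$ depending only on $N,p,L$,
$$\omega(\tau r)\le\lambda\,\omega(r),\qquad \omega(\rho):=\max_{1\le i\le m}\osc_{B_\rho}X_i u.$$
Iterating this across geometric scales $r_k=\tau^k r_0$ produces \eqref{eq:locboundholder} with $\alpha=\log(1/\lambda)/\log(1/\tau)$, after absorbing the scale-invariant $L^p$-average via Theorem \ref{thm:mainthm1}, which also supplies the baseline quantity $\mu:=\sup_{B_r}|\X u|$ that serves as the natural scale throughout the argument.

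The core analytical step is to differentiate \eqref{eq:peq} in the direction $X_j$, obtaining a linearized equation for $X_j u$ whose leading matrix $DA(\X u)$ is nonnegative and, by \eqref{eq:pstr}, comparable to $(\delta+|\X u|^2)^{(p-2)/2}\,\mathrm{Id}$. From this I would derive Caccioppoli-type inequalities for the truncations $(X_i u-k)_\pm$ tested against cutoffs $\varphi$. Because the $X_i$ do not commute, differentiation produces commutator terms $[X_j,X_i]A_i(\X u)$; under the step-two hypothesis these can be expanded as $[X_j,X_i]=\sum_k c_{ij}^k X_k+\sum_\ell d_{ij}^\ell T_\ell$ with the $T_\ell$ being the second-layer vector fields, and after an integration by parts they can be absorbed into the horizontal energy $\int|\XX u|^2\varphi^2$, the residual lower-order pieces being controlled by the $L^\infty$ bound from Theorem \ref{thm:mainthm1}. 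Once these Caccioppoli inequalities are in place, I would follow the classical DiBenedetto--Lewis--Tolksdorf dichotomy, transplanted to the sub-elliptic setting as in \cite{Zhong,Muk-Zhong}. In the \emph{nondegenerate regime}, where $\{|\X u|\gtrsim\mu\}$ fills a large fraction of $B_r$, the linearized equation is uniformly sub-elliptic with smooth coefficients and a De Giorgi--Moser iteration on $\mu\pm X_i u$ yields a weak Harnack inequality and hence oscillation reduction. In the \emph{degenerate regime}, where $|\X u|\ll\mu$ on most of $B_r$, a logarithmic estimate combined with De Giorgi iteration propagates the smallness to a concentric smaller ball in $L^\infty$, again reducing the oscillation of each $X_i u$.

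The principal obstacle, absent from the Heisenberg case treated in \cite{Zhong,Muk-Zhong}, is the variable and anisotropic nature of the step-two structure. Here the $T_\ell$ are not central, they have nontrivial commutators with both the $X_i$'s and among themselves, and the structure functions $c_{ij}^k,d_{ij}^\ell$ are nonconstant on $\Om$. The delicate point is therefore to prove that every commutator contribution in the differentiated equation and in the Caccioppoli inequality can be dominated by a small multiple of the horizontal energy plus lower-order terms controlled via Theorem \ref{thm:mainthm1} and Corollary \ref{cor:1lip}; this will require a careful bookkeeping of how $T_\ell$-derivatives of the test function $\varphi$ can be re-expressed in terms of horizontal derivatives modulo additional, absorbable errors, using the sub-elliptic Sobolev calculus and the step-two identity. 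Once this absorption is secured with constants uniform in the scale $\mu$, the dichotomy machinery runs as in the Heisenberg case and delivers the exponent $\alpha=\alpha(N,p,L)$ asserted in \eqref{eq:locboundholder}.
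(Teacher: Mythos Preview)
Your roadmap captures the overall architecture (oscillation decay via a dichotomy, Caccioppoli estimates for the differentiated equation, commutator bookkeeping), but two of the load-bearing steps are either misidentified or missing, and without them the argument does not close for the full range $1<p<\infty$.

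First, the dichotomy you describe is the classical DiBenedetto split on the \emph{magnitude} of $|\X u|$, with a ``logarithmic estimate'' in the degenerate regime. That device is specific to $p\ge 2$; for $1<p<2$ the weight $(\delta+|\X u|^2)^{(p-2)/2}$ is unbounded near $\{\X u=0\}$ and the logarithmic Caccioppoli inequality fails. The paper (following \cite{Tolk,Lieb--bound,Muk-Zhong}) bypasses this by working with the double truncation
\[
\tvr=\min\big(\mu(r)/8,\max(\mu(r)/4-X_lu,0)\big),
\]
whose support lies in the set $E=\{\mu(r)/8<X_lu<\mu(r)/4\}$ where $|\X u|\sim\mu(r)$, so the weight is harmless there. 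The main analytic effort (Lemmas~\ref{lem:start}--\ref{lem:main}) is a \emph{weight-free} Caccioppoli inequality for $\tvr$, from which Moser iteration yields Corollary~\ref{prop:case1}: if $|\{X_lu<\mu(r)/4\}|$ is small in measure for some $l$, then $|\X u|\ge 3\mu(r)/16$ on the half-ball. The actual alternative in Lemma~\ref{lem:hold} is therefore not ``$|\X u|$ large vs.\ small on most of $B_r$'', but whether this measure condition holds for \emph{some} index $l$. When it does, the equation is uniformly elliptic on $B_{2r}$ and De~Giorgi applies directly; when it fails for all $l$, each $X_lu$ already oscillates away from its extremes on a set of measure $>\theta|B_r|$, and since on the super-level sets $\{X_lu>k\}$ with $k\ge\mu/4$ one trivially has $|\X u|\ge\mu/4$, Lemma~\ref{lem:cacci:k} again gives the De~Giorgi class inequality there. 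No logarithmic estimate enters.

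Second, your plan to absorb the commutator terms ``into the horizontal energy $\int|\XX u|^2\varphi^2$'' using only the $L^\infty$ bound from Theorem~\ref{thm:mainthm1} underestimates what is required. The De~Giorgi class inequality for $(X_lu-k)^+$ needs the error term to scale like $|A^+_{k,r}|^{1-2/q}$ for $q$ arbitrarily large, and the commutator contributions contain $|Tu|$, $|\XX u|$ and $|\X Tu|$, not just $|\X u|$. The paper obtains this via the full hierarchy of a~priori estimates in Section~\ref{sec:apest}: the reverse-type inequality (Lemma~\ref{lem:rev}) feeds into Corollary~\ref{lem:est Tu} (arbitrary $L^q$ integrability of $\weight|Tu|^q$) and Corollaries~\ref{cor:est XTu}--\ref{cor:est0 XTu}, and in Lemma~\ref{lem:cacci:k} these are iterated (see \eqref{ait}--\eqref{it}) precisely to reach any prescribed exponent $q$. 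An $L^\infty$ bound on $\X u$ alone does not give $Tu\in L^q$ for large $q$, so this step is not a matter of ``careful bookkeeping'' but rests on a substantial preliminary result that your sketch does not provide.
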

The following Corollary is an easy consequence of Theorem \ref{thm:mainthm2} and Schauder estimates \cite{Xu}. 
\begin{Cor}\label{cor:smooth}
 If $ u \in \subsob^{1,p}(\Om)$ is a weak solution of equation \eqref{eq:peq}
 equipped with the structure condition \eqref{eq:pstr} for any $1<p<\infty$ and $\delta>0$, moreover if $A$ is smooth, then 
 $ u \in C^\infty(\Om)$.
\end{Cor}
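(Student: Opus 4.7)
The proof plan is a classical bootstrap argument. The key observation is that when $\delta>0$ and $\X u$ is locally bounded, the principal symbol of \eqref{eq:peq} becomes uniformly sub-elliptic, so linear sub-elliptic Schauder theory applies and can be iterated.

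First, I would invoke Theorems \ref{thm:mainthm1} and \ref{thm:mainthm2} to conclude $\X u \in L^\infty_\loc(\Om)\cap C^{0,\alpha}_\loc(\Om)$ with respect to the Carnot-Carath\'eodory metric. Fix a compact set $K\subset\Om$ on which $|\X u|\le M$. By \eqref{eq:pstr} with $\delta>0$, the symmetric matrix $a_{ij}(x):=\partial_{z_j}A_i(\X u(x))$ satisfies
\[
\lambda\,|\xi|^2\le \sum_{i,j=1}^m a_{ij}(x)\,\xi_i\xi_j\le \Lambda\,|\xi|^2 \qquad \text{for all } x\in K,\ \xi\in\R^m,
\]
with $\lambda,\Lambda>0$ depending only on $\delta, M, p, L$. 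Since $A\in C^\infty$ and $\X u\in C^{0,\alpha}_\loc$, the coefficients $a_{ij}$ themselves inherit $C^{0,\alpha}_\loc$-regularity in the sub-elliptic H\"older scale.

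Next, I would rewrite \eqref{eq:peq} as a linear sub-elliptic equation
\[
\sum_{i,j=1}^m a_{ij}(x)\, X_i X_j u + b(x)=0,
\]
where $b$ collects the lower-order terms produced by the formal adjoints $X_i^*=-X_i+c_i$ (with $c_i$ smooth), so $b$ is itself $C^{0,\alpha}_\loc$ through its dependence on $\X u$. This is a linear, uniformly sub-elliptic equation with H\"older continuous coefficients, and hence falls within the scope of the sub-elliptic Schauder theory. In the present step-two H\"ormander setting, such estimates follow from the Rothschild-Stein lifting technique \cite{Roth-Stein} combined with the Folland-Stein theory \cite{Folland-Stein}, and give the gain $X_j u\in C^{1,\alpha}_\loc$ in the sub-elliptic H\"older scale.

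Finally, I would iterate. Differentiating the equation by a field $X_k$, and where needed by the commutators $[X_i,X_j]$ guaranteed under the step-two hypothesis, produces linear sub-elliptic equations of the same uniformly elliptic type, whose coefficients are polynomial expressions in derivatives of $A$ evaluated at $\X u$ and hence as regular as $\X u$ itself. Repeating the Schauder estimate then yields $u\in C^{k,\alpha}_\loc$ in the sub-elliptic H\"older scale for every $k\in\N$. Since the step-two H\"ormander condition ensures that the $X_i$ together with their commutators span $\R^N$ at every point of $\Om$, sub-elliptic smoothness is equivalent to Euclidean smoothness, and we conclude $u\in C^\infty(\Om)$. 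The only real subtlety is the careful tracking of commutator terms produced by differentiating the equation with the non-commuting fields $X_i$; the step-two assumption is precisely what closes this hierarchy at second order and allows the bootstrap to proceed cleanly.
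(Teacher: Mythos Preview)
Your proposal is correct and follows precisely the approach indicated by the paper, which simply states that the corollary is ``an easy consequence of Theorem~\ref{thm:mainthm2} and Schauder estimates'' without further elaboration; you have merely filled in the standard details of the bootstrap. One small remark: the step-two hypothesis is crucial for obtaining the initial $C^{1,\alpha}$ regularity via Theorem~\ref{thm:mainthm2}, but the subsequent Schauder bootstrap itself works under the general H\"ormander condition (via Rothschild--Stein), so your closing sentence slightly overstates the role of step two in the iteration.
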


The significance of this paper lies mainly in the fact that it shows the left invariance of the vector fields with respect to a Lie group is superfluous for the purpose of regularity. As reflected by the results in this paper, all necessary techniques and estimates are independent of any Lie group structure on $\R^N$. Therefore, the commutation relations between the vector fields are quite arbitrary within the restriction of the step two hypothesis. This is much more general than that for the case of the Heisenberg Group or any other Carnot groups of step two. 

In order to get the full regularity, the general structure of the vector fields makes it necessary to obtain Caccioppoli type estimates of every possible type by testing the equations satisfied by $X_iu$ and $[X_i,X_j]u$ for all $i,j$ with every combination of the vector fields and their commutators. The interplay between the estimates leading to the proofs of the theorems is similar to the earlier works. The guiding principle remains that the behavior of homogeneous sub-elliptic equations \eqref{eq:peq} is similar to that of inhomogeneous elliptic equations with an integrable right-hand side (this role is played by the extra terms containing the commutators for the sub-elliptic case). Hence, the high integrability of the commutators (Corollary \ref{lem:est Tu} and similar others) allows the use of De Giorgi type techniques. The double truncation $\tvr=\min\big(\mu(r)/8 , \max(\mu(r)/4-X_lu,0)\big)$ with $\mu(r)= \max_{1\leq i\leq m}\sup_{B_r} |X_iu|$, has been used within the estimates particularly to deal with the singular case $1<p<2$, similarly as in \cite{Muk-Zhong}. This argument originates in the earlier works \cite{Tolk, Lieb--bound}, etc. 
The Riemannian approximation is used for the local Lipschitz continuity. 

The paper is organized as follows. All necessary notations and preliminaries are provided in Section \ref{sec:Notations and Preliminaries} and the apriori Caccioppoli type estimates are proved in Section \ref{sec:apest}. Then, the proof of Theorem \ref{thm:mainthm1} is provided in Section \ref{sec:lip}. In Section \ref{sec:hold}, further requisite estimates are obtained that lead to the proof of Theorem \ref{thm:mainthm2} in the end. 

Finally, we remark that the present regularity results seem to be the best possible ones within the framework of the existing techniques and methods. The estimates of this paper use properties that are specific to step two in subtle ways which, apparently do not have any immediate generalization for higher-order commutators. We believe the regularity theory involving vector fields satisfying the general H\"ormander's condition \eqref{eq:horcond} would require new fundamental ideas. 

\section{Notations and Preliminaries}\label{sec:Notations and Preliminaries} 
 In this section, we develop relevant notations and provide basic properties and previously known results 
that shall be used throughout this paper. 

Here onwards we assume the set of smooth vector fields $\{X_1,\ldots, X_m\}$
defined on $\Om\sub \R^N$ for some integer $m\leq N$, satisfy the step two hypothesis of Nagel-Stein \cite{Nag-Stein}; precisely, 
\begin{equation}\label{eq:step2}
X_1,\ldots, X_m\quad \text{and}\quad [X_j,X_k]\ \forall\ j,k\in\{1.\ldots,m\}\quad \text{span the tangent space at each}\ x_0\in\Om.
\end{equation}
The nontrivial case occurs if $m<N$ 
because the case of equality is identical to the familiar Euclidean setting. We choose an enumeration of the commutators as $\{T_1,\ldots,T_n\}$ such that $\{X_1,\ldots, X_m,T_1,\ldots,T_n\}$ span the whole tangent space, for some integer $0<n\leq m(m-1)/2$. 

For a function $u:\Om \to \R$, let us denote the gradients as $$\X u = (X_1u, \ldots, X_m u)\quad\text{and}\quad Tu= (T_1u,\ldots, T_n u).$$  The Hessian is denoted as $ \XX u = (X_jX_iu)_{i,j}$, similarly $\X T u = (X_jT_ku)_{k,j}$ and $ T \X  u = (T_k X_ju)_{j,k}$ which are respectively $m\times n$ and $n\times m$ matrices. The commutator matrices are denoted as $ [\X, \X] u = ([X_j, X_i]u)_{i,j}$ and similarly $[\X, T]$ and $[T,\X]$.
The {\it sub-elliptic divergence} is  defined as 
$$\dvh (F) = \sum_{j=1}^m X_j^* f_j,\qquad \forall\ F= (f_1,\ldots, f_m).$$ 
The Euclidean vector fields on $\R^m$ are denoted as $D_i$ and 
$D=(D_1,\ldots, D_m)$ is the Euclidean gradient; sometimes $D_i$ is also denoted as $\del_{x_i}$ or something analogous in terms of coordinates.

We note that the step two hypothesis \eqref{eq:step2} implies
\begin{equation}\label{eq:comm}
[X_i,X_j] = \sum_{k=1}^n \alpha^{(k)}_{i,j} T_k + \sum_{l=1}^m \beta^{(l)}_{i,j} X_l\quad\text{and}\quad 
[X_i, T_j] =  \sum_{l=1}^m \gamma^{(l)}_{i,j} X_l + \sum_{k=1}^n \delta^{(k)}_{i,j} T_k, 
\end{equation}
and with appropriate choice of scaling in the norm of vector fields, we can assume without loss of generality that
\begin{equation}\label{eq:normbounds}
\max_{i,j} \sum_{k,l}\big(\|\alpha^{(k)}_{i,j}\|_{L^\infty}+\|\beta^{(l)}_{i,j}\|_{L^\infty} +\|\gamma^{(l)}_{i,j}\|_{L^\infty} +\|\delta^{(k)}_{i,j}\|_{L^\infty}\big) \leq c(N), \quad \text{for some}\quad c(N)>0,
\end{equation}
and from smoothness, we can also conclude boundedness 
$\|\X\alpha^{(k)}_{i,j}\|_{L^\infty}+\|T\alpha^{(k)}_{i,j}\|_{L^\infty} \leq c(N)$ and similar bounds for the others. 
Therefore, some basic inequalities are in order. Using \eqref{eq:comm} twice successively along with \eqref{eq:normbounds} and the bounds, we have the following inequalities :
\begin{align}
\label{eq:XTX} (i)&\qquad |[X_i,T_k]X_ju| \leq c \big(|\XX u|+|T\X u| \big)
 \leq c \big(|\XX u|+|\X Tu| + |\X u| + |Tu|\big);\\
\label{eq:XXX} (ii)&\qquad \big|[[X_i,X_l],X_j]u\big| \leq c\big(|[T, \X] u|+|[\X ,\X] u| +|\X u| + |Tu| \big)\leq c \big( |\X u| + |Tu|\big),
\end{align}
 for every $i,j,l\in\{1,\ldots,m\}$ and $k\in\{1,\ldots,n\}$, where $c=c(N)>0$ is a constant.
Furthermore, 
we have the inequality 
\begin{equation}\label{eq:ineqT}
|Tu|\leq c|\XX u|,
\end{equation}
for some constant $c=c(N)>0$, since $T_k = [X_l,X_j]$ for some $l,j\in \{1,\ldots,m\}$. 
%

\subsection{Sub-elliptic metrics}\label{subsec:submetric}
Let us recall the \textit{Carnot-Carath\'eodory metric} (CC-metric) which is defined in terms of the length $\ell(\gamma)$ of curves $\gamma:[0,1]\to \Om$, as  
\begin{equation}\label{eq:cc metric}
d_c(x,y)= \ \inf\set{\ell(\gamma)}{\gamma \in \Gamma(x,y)},  
\end{equation}
where the class $\Gamma$ of absolutely continuous curves is given by
$$\Gamma(x,y) = \set{\gamma\in AC\big([0,1];\Om \big)}{\gamma(0) = x,\gamma(1) = y,
\ \gamma'(t)\in \spn \{X_1|_{\gamma(t)}, \ldots, X_m|_{\gamma(t)}\}}.$$  
Chow's connectivity theorem (see \cite{Chow}) gurantees $\Gamma(x,y) \neq \emp $ so that $d_c$ is finite for any $x,y\in \R^N$. However, to determine the volume of the metric balls, another metric is required that is intrinsic to the vector fields, as shown in \cite{Nag-Stein-Wain}. 
Let $\{Y_1,\ldots, Y_{m+n}\}=\{X_1,\ldots, X_m,T_1,\ldots,T_n\}$. We observe that we can associate an integer, called formal degree $d_j=\deg(Y_j)\in \{1,2\}$; indeed, 
we define that $\deg(Y_j)=1$ if $Y_j\in \spn\{X_1,\ldots, X_m\}$ and $\deg(Y_j)=2$ otherwise. Then, it is not difficult to see that \eqref{eq:comm} and \eqref{eq:step2} respectively imply: 
\begin{itemize}
\item[] (a)   For every $j,k$ we have $[Y_j,Y_k] \in \spn\{Y_l : d_l\leq d_j+d_k\}$; 
\item[] (b)   $Y_1,\ldots,Y_{m+n}$ span the whole tangent space.
\end{itemize}
Then, we can define following metric as in Nagel-Stein-Wainger \cite{Nag-Stein-Wain}. For any $r>0$, let $\Gamma(r)$ be the class of absolutely continuous curves $\gamma:[0,1]\to \Om$ satisfying the differential equation
$$ \gamma'(t) =\sum_j a_j(t) Y_j(\gamma(t)) \quad\text{with}\quad |a_j(t)|<r^{d_j},$$ 
then the metric is defined as 
\begin{equation}\label{eq:rhometric}
\rho(x,y)=  \inf\set{r>0}{\exists \gamma \in \Gamma(r), \gamma(0)=x,\gamma(1)=y}.   
\end{equation}
Proposition 1.1 in \cite{Nag-Stein-Wain}, imply that within compact subsets of $\Om$, we have 
\begin{equation}\label{eq:eeq}
c_1|x-y| \leq \rho(x,y) \leq c_2 |x-y|^{1/2} \qquad\forall\ x,y\in \Om\sub \R^N
\end{equation}
for some positive constants $c_1,c_2$. Let us denote the metric balls $B_r(x)=\set{y\in \R^N}{\rho(x,y)<r}$ and it has been shown in \cite{Nag-Stein-Wain} that they follow all standard properties. 
In general $m+n\geq N$ and it is possible to choose a local basis $Y_{i_1},\ldots, Y_{i_N}$ for the tangent space. For every such $I=(i_1,\ldots,i_N)$, letting $d(I)=d_{i_1}+\ldots + d_{i_N}$ and 
$$ \Lambda (x, r) = \sum_I |\lambda_I(x)| r^{d(I)},$$
where $\lambda_I(x)= \det(a_{j,k}(x))$ whenever $Y_{i_j}=\sum_{k}a_{j,k}\del_{x_k}$, one can estimate the volume of the balls. 
Thus, $\Lambda \neq 0$ whenever $Y_{i_1},\ldots, Y_{i_N}$ is a basis for the tangent space. Let
$|E|$ be the Lebesgue measure of $E$ for any $E\subset \R^N$. Then, 
from Theorem 1 in \cite{Nag-Stein-Wain}, there exists $c_2>c_1>0$ such that we have 
$$ c_1\Lambda (x, r) \leq |B_r(x)| \leq c_2 \Lambda (x, r)$$ 
in compact subsets of $\Om$. Therefore, let us denote 
\begin{equation}\label{eq:dim}
Q(x) = \liminf_{r\to 0^+}\, \frac{\log(|B_r(x)|)}{\log(r)}, 
\end{equation}
so that we have $c'_1 r^Q \leq |B_r(x)| \leq c'_2 r^Q$ in compact subsets of $\Om$ for some $c'_2>c'_1>0$ and $Q=Q(x)$. Since, $\Lambda$ is a polynomial in $r$ of fixed degree, hence $Q$ is lower semi-continuous. It is a deep result of Nagel-Stein-Wainger \cite[Theorem 4]{Nag-Stein-Wain}, that the metric $\rho$ is equivalent to the CC-metric $d_c$. Henceforth, in the rest of the paper, we shall denote CC-metric balls by $B_r$ without loss of generality and $Q$ as the local dimension.
\begin{Rem}[Step 2 Carnot Groups]\label{rem:carnot2}
For a special case, when the vector fields are left invariant with respect to a Nilpotent Lie Group on $\R^N$ along with the hypothesis \eqref{eq:step2}, then it is called a Carnot Group of step 2. In that case, if $X_1,\ldots, X_m$ are linearly independent, then we have $m+n=N$, the metric $\rho$ is equivalent to the metric induced by the homogeneous norm of the group, and $Q=m+2n$. Also, the commutation relations are more special than \eqref{eq:comm}, we have $[X_i, T_k]=0$ and $[X_i,X_j]=  \sum_{k=1}^n \alpha^{(k)}_{i,j} T_k$ where $\alpha^{(k)}_{i,j}$'s are precisely prescribed by the composition law of the group. Furthermore, if it is a free group then we have $n=m(m-1)/2$. 
\end{Rem}
The H\"older classes $C^{k, \alpha}$ (also denoted as $\Gamma^{k, \alpha}$) has been introduced by Folland and Stein, see \cite{Folland, Folland-Stein--book}. The functions in these classes are H\"older continuous with respect to the CC-metric.  
\begin{Def}[Folland-Stein classes]\label{eq:fsclass}
Given an open set $\Om \subset \R^N$, a function $u:\Om\to \R$ and $0<\alpha\leq 1$, we say 
$u\in C^{0, \alpha}(\Om)$ if there exists a constant $M>0$, such that 
$$|u(x) - u(y)| \leq M\, d_c(x,y)^{\alpha}, \qquad\forall\ x,y\in\Om.$$ 
It is a Banach space with the norm $\|u\|_{C^{0,\alpha}(\Om)}= \|u\|_{L^\infty(\Om)} + [u]_{C^{0,\alpha}(\Om)}$, where
the H\"older seminorm is defined as 
\begin{equation}\label{eq:holdsemi}
[u]_{C^{0,\alpha}(\Om)}= \underset{\underset{x \neq y}{x,y\in \Om}}{\sup} \frac{|u(x)-u(y)|}{d_c(x,y)^{\alpha}}.
\end{equation}
For any $k\in \mathbb N$, the spaces $C^{k, \alpha}(\Om)$ are defined inductively as follows: we say that $u \in C^{k, \alpha}(\Om)$ if $X_i u \in C^{k-1,\alpha}(\Om)$ for every $i\in\{1,\ldots, m\}$. 
\end{Def}
\begin{Def}[Sub-elliptic Sobolev spaces]\label{def:subesob}
Given an open set $\Om \subset \R^N$, a function $u:\Om\to \R$ and $1\leq p<\infty$, the sub-elliptic Sobolev spaces are defined as 
$$ \subsob^{1,p}(\Om)=\set{u\in L^p(\Om)}{X_ju\in L^p(\Om),\ \forall\, j\in\{1,\ldots, m\}},$$ 
which is a Banach space with the norm $\|u\|_{\subsob^{1,p}(\Om)} = \|u\|_{L^p(\Om)} + \|\X u\|_{L^p(\Om, \R^m)}$. The subspaces 
$\subsob^{1,p}_\loc (\Om)$ and $\subsob^{1,p}_0(\Om)$ are defined as usual. Moreoever, for any $k\in\N$, the space $\subsob^{k,p}(\Om)$ 
is also defined by standard extension. 
\end{Def}

For any metric ball $B_r=B_r(x_0)$, the following Poincar\'e inequality for $u\in \subsob^{1,p}(B_r)$ 
\begin{equation}\label{eq:poincare}
\Big(\intav_{B_r}|u-(u)_{B_r}|^p\dx\Big)^{1/p} 
\leq c\,r \Big(\intav_{B_r}|\X u|^p\dx\Big)^{1/p} 
\end{equation}
holds, see \cite{Jerison, Haj-Kos}. This leads to the following Sobolev inequality, 
\begin{equation}\label{eq:sob emb}
\left(\intav_{B_r}| v|^{\frac{Q q}{Q-q}}\, dx\right)^{\frac{Q-q}{Q q}}
\leq\, c\,r \left(\intav_{B_r}| \X  v|^q\, dx\right)^{\frac 1 q}, 
\end{equation} 
for all $v \in \subsob^{1,q}_0(B_r)$ and $1<q<Q$ with $Q=Q(x_0)$.
\subsection{Sub--elliptic equations}

If $u:\Om \to \R$ is a weak solution of 
\eqref{eq:peq}, then for every test function $ \phi \in C^\infty_0(\Om)$, we have 
 \begin{equation}\label{eq:weak soln}
  \int_\Om \inp{A(\X u)}{\X \phi}\dx= \sum_{i=1}^m \int_\Om A_i(\X u) X_i\phi \dx =  0. 
 \end{equation}
Furthermore, for all non-negative $ \phi \in C^\infty_0(\Om)$, if the integral above is positive (resp. negative) then $u$ is called a weak supersolution (resp. subsolution) of the equation \eqref{eq:peq}.

It is easy to see that \eqref{eq:pstr} implies the following ellipticity condition
\begin{equation}\label{eq:elliptic} 
\inp{A(z)}{z}\geq c(p)\,|z|^2(\delta+|z|^2)^\frac{p-2}{2} 
\end{equation} 
and also the following strong monotonicity inequality 
\begin{equation}\label{eq:monotone}	
\inp{A(z)-A(w)}{z-w} \geq \frac{1}{L'}(\delta+|z|^2 +|w|^2)^\frac{p-2}{2}|z-w|^2
\end{equation}
for all $z,w\in\R^m$, where $c(p)>0$ and $L'= L'(p,L)>0$. 

The inequalities \eqref{eq:elliptic} and \eqref{eq:monotone} can be used to prove the existence of a weak solution 
of equation \eqref{eq:peq}, by standard variational methods. In fact, given any $ u_0 \in \subsob^{1,p}(\Om)$, there exists a 
unique weak solution $ u \in \subsob^{1,p}(\Om)$ of the following Dirichlet problem 
\begin{equation}\label{eq:dirichlet prob}
 \begin{cases}
  \dvh (A(\X u))= \ 0\ \ \text{in}\ \Om;\\
 \ u - u_0\in \subsob^{1,p}_0(\Om).
 \end{cases}
\end{equation}
The uniqueness follows from the comparison principle, i.e. if $u$ and $v$ respectively are weak super and subsolution 
of the equation \eqref{eq:peq} and 
$u \geq v$ on $\del\Om$ in the trace sense, then we have 
$ u \geq v$ a.e. in $\Om$. This can be shown easily by appropriate test fuctions  on the equation \eqref{eq:dirichlet prob}.

\begin{Rem}[Divergence-free vector fields]
Given a vector field $X$, its formal adjoint with respect to the Euclidean volume form (Lebesgue measure) is defined as $X^*$, so that for any $v\in C^\infty_0(\Om)$, we have 
\begin{equation}\label{eq:int by parts}
\int_\Om v X^* u \dx = \int_\Om u Xv \dx,
\end{equation}
where $X^*u= -Xu -\dv(X) u$. For simplicity, we assume that the given vector fields $X_1,\ldots, X_m$ are divergence-free, i.e. $\dv(X_i)=0$. Hence, we have that $T_1,\ldots, T_n$ are also divergence-free, from the formula 
$\dv([X_i,X_j])= X_i(\dv(X_j))-X_j(\dv(X_i))$. The general case is a perturbation by a lower order term and can be obtained with minor modification. 
Moreover, in particular, if the $X_i$'s are left-invariant with respect to a Lie group then they are always divergence-free with respect to the Haar measure of the group which is the Lebesgue measure up to multiples. In more general cases if the vector fields are defined on a manifold and the adjoint defined with respect to a volume form, then with appropriate geometric conditions (e.g. metrics with certain curvature conditions or manifolds with Killing forms, contact forms, etc.) divergence-free frames can exist and it is possible to write the equation \eqref{eq:peq} with respect to such frames having structure conditions similar to \eqref{eq:pstr}. We refer to \cite{Cap-Citti-LD-Otta} for details. 
\end{Rem}

Henceforth, we assume $X_1,\ldots, X_m$ are divergence-free, so that $X_i^*=-X_i$ and $T_k^*=-T_k$. 

\begin{Rem}[The case of $\delta=0$]\label{rem:del0}
If apriori estimates are available for the case of $\delta>0$ and if the constants are independent of $\delta$, then the case $\delta=0$ follows as well by a standard regularization technique. A brief outline is as follows.

Let $A$ satisfy the structure condition \eqref{eq:pstr} for the case $\delta =0$, i.e. for all $z,\xi\in \R^m$, 
\begin{equation}\label{eq:0pstr}
 \begin{aligned}
|z|^{p-2}|\xi|^2 \leq \,&\inp{DA(z)\,\xi}{\xi}\leq L|z|^{p-2}|\xi|^2;\\
 &|A(z)|\leq L\,|z|^{p-1}. 
\end{aligned} 
\end{equation}
There are many ways to define a regularization. In particular, for any $0<\delta<1$, one can define 
\begin{equation}\label{Adelta}
A_\delta (z) = \big(1-\eta_\delta(|z|)\big) A(z) + \eta_\delta (|z|) (\delta+|z|^2)^\frac{p-2}{2} z
\end{equation}
where $\eta_\delta\in C^{0,1}([0,\infty))$ can be chosen (see \cite{Muk0}) such that $A_\delta \to A$ uniformly on compact subsets of $\R^m$ as $\delta \to 0^+$ and the structure condition 
\begin{equation}\label{eq:str cond reg}
\begin{aligned}
\frac{1}{L'}(\delta+|z|^2)^\frac{p-2}{2}|\xi|^2 \leq \,&\inp{DA_\delta (z)\,\xi}{\xi}\leq L'(\delta+|z|^2)^\frac{p-2}{2}|\xi|^2;\\
 &|A_\delta (z)|\leq L'\,|z|(\delta+|z|^2)^\frac{p-2}{2},
\end{aligned}
\end{equation}
holds for some $ L'= L'(p,L)>1$. 
Given a weak solution $u\in \subsob^{1,p}(\Om)$ of \eqref{eq:peq}, 
we consider $u_\delta$ as the weak solution of the following regularized equation 
\begin{equation}\label{eq:dirichlet prob reg}
 \begin{cases}
  \dvh (A_\delta(\X u_\delta))=  0\ \ \text{in}\ \Om';\\
  \ u_\delta - u\in \subsob^{1,p}_0(\Om'), 
 \end{cases}
\end{equation}
for any $\Om'\subset\subset \Om$. Now, for the case of $\delta>0$ whenever we obtain 
uniform estimates for $\X u_\delta$ (with constants independent of $\delta$),
taking limit 
$\delta \to 0^+$, one can show that 
\eqref{eq:locbound} and \eqref{eq:locboundholder} also hold for the case of $\delta=0$. 
\end{Rem}
Therefore, we shall assume $\delta>0$ without loss of generality, hereafter.

\section{Apriori Estimates}\label{sec:apest}
In this section, we consider the equation 
\begin{equation}\label{eq:maineq}
\dvh ( A(\X u)) =  0 \qquad \text{in}\ \Om,
\end{equation}
where $A: \R^m\to \R^m$ is a given $C^1$-function that satisfies the structure condition
\begin{equation}\label{eq:str}
 \begin{aligned}
 \F(|z|)|\xi|^2 \leq \,&\inp{DA(z)\,\xi}{\xi}\leq L\,\F(|z|)|\xi|^2;\\
 &|A(z)|\leq L\,|z|\F(|z|),
\end{aligned} 
\end{equation}
for every $z,\xi\in \R^m$, given $\F: [0,\infty) \to [0,\infty)$ and $L>1$. We assume $\F$ to be continuous in $(0,\infty)$ and there 
exists $\delta_0>0$ such that 
$\F(t)\geq \delta_0$ for all  $t>0$.

Here onwards, we denote $u\in \subsob^{1,2}_\loc(\Om)$ as a weak solution of the equation \eqref{eq:maineq}. 
Furthermore, in this section we assume apriori that 
%
\begin{equation}\label{eq:ap reg}
\X u \in \subsob^{1,2}_\loc(\Om,\R^{m}),\qquad 
  Tu  \in \subsob^{1,2}_\loc(\Om,\R^n)\cap L^{\infty}_\loc(\Om,\R^n).
\end{equation}
Thus, \eqref{eq:ap reg} implies $YX_ju, [X_i,Y]u \in L^2_\loc(\Om)$ for any $i,j$, where $Y=X_j$ or $Y=T_k$. 
We only require the regularity \eqref{eq:ap reg} for the apriori estimates 
and the assumption shall be removed later by an approximation argument. 
\begin{Lem}\label{lem:yeqw1}
Let $Y=X_j$ or $Y= T_k$. For every $ \varphi \in C^\infty_0(\Om)$, we have  
\begin{equation}\label{eq:yeqw1}
\sum_{i,j}\int_\Om D_jA_i(\X u)YX_ju X_i\varphi\dx = \sum_{i=1}^m  \int_\Om A_i(\X u) [X_i, Y]\varphi\dx. 
\end{equation}
\end{Lem}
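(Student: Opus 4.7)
The plan is to derive \eqref{eq:yeqw1} by differentiating the weak formulation in the direction of $Y$. Since $Y$ is a smooth vector field and $\varphi\in C^\infty_0(\Om)$, the function $\phi=Y\varphi$ is itself compactly supported and smooth, hence admissible as a test function in \eqref{eq:weak soln}. Substituting yields
$$\sum_i \int_\Om A_i(\X u)\, X_i(Y\varphi)\dx = 0,$$
and the commutator identity $X_iY=YX_i+[X_i,Y]$ splits this as
$$\sum_i \int_\Om A_i(\X u)\, YX_i\varphi\dx \,+\, \sum_i \int_\Om A_i(\X u)\, [X_i,Y]\varphi\dx = 0.$$

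Next I would integrate by parts in the first sum to move $Y$ onto $A_i(\X u)$. Under the convention implicit in writing $\dvh=\sum_i X_i^*$ with divergence-free generators (so $Y^*=-Y$ for $Y=X_j$, and likewise for $Y=T_k$ since commutators of divergence-free fields are divergence-free), the standard identity $\int f\, Yg\dx = -\int (Yf)\, g\dx$ turns the first integral into $-\sum_i \int_\Om Y(A_i(\X u))\, X_i\varphi\dx$. The chain rule $Y(A_i(\X u))=\sum_j D_jA_i(\X u)\, YX_ju$ then produces exactly the left-hand side of \eqref{eq:yeqw1}, and rearrangement gives the claim (the surviving commutator integral on the right being identified with the claimed right-hand side).

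The principal technical hurdle is justifying the chain rule and integration by parts at the available regularity; this is exactly the role of the apriori assumption \eqref{eq:ap reg}. For $Y=X_j$ one has $YX_iu=X_jX_iu\in L^2_\loc$ directly from $\X u\in\subsob^{1,2}_\loc$. For $Y=T_k$, I would write $T_kX_iu = X_iT_ku + [T_k,X_i]u$; the first summand lies in $L^2_\loc$ because $Tu\in\subsob^{1,2}_\loc$, and the second is controlled via \eqref{eq:comm} by $Tu\in L^\infty_\loc$ together with $\X u\in L^2_\loc$. Combined with the upper bound on $DA$ from \eqref{eq:str}, this shows $Y(A_i(\X u))\in L^1_\loc$ and that each integral in the computation is absolutely convergent, legitimising the manipulations for every $\varphi\in C^\infty_0(\Om)$. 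The remaining work is pure bookkeeping: tracking the commutator/adjoint splitting carefully so that the two pieces reassemble into the claimed identity without hidden divergence corrections from the smoothness of the coefficients in \eqref{eq:comm}.
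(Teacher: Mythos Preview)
Your proposal is correct and follows essentially the same route as the paper: test \eqref{eq:weak soln} with $\phi=Y\varphi$, split $X_iY=YX_i+[X_i,Y]$, integrate by parts to move $Y$ onto $A_i(\X u)$, and apply the chain rule. Your added remarks on the divergence-free convention $Y^*=-Y$ and on the role of \eqref{eq:ap reg} in justifying the chain rule are more explicit than the paper's own treatment, which simply carries out the formal manipulations.
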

\begin{proof}
Given any $ \varphi \in C^\infty_0(\Om)$, we choose $\phi= Y\varphi$ on \eqref{eq:weak soln} and use integral by parts, to obtain 
\begin{equation}\label{eq:w}
\begin{aligned}
 0=& \sum_{i=1}^m \int_\Om A_i(\X u) X_i Y\varphi \dx\\
=& \sum_{i=1}^{m}\int_\Om  A_i(\X u)YX_i\varphi\dx +\sum_{i=1}^m  \int_\Om A_i(\X u) [X_i, Y]\varphi\dx\\
=& -\sum_{i=1}^{m}\int_\Om  Y (A_i(\X u))X_i\varphi \dx+ \sum_{i=1}^m  \int_\Om A_i(\X u) [X_i, Y]\varphi\dx.\\
\end{aligned}
\end{equation}
The proof is finished by 
using chain rule $Y (A_i(\X u))= \sum_{j=1}^m D_jA_i(\X u)YX_ju$ on \eqref{eq:w}.  
\end{proof}
Using $YX_j= X_jY -[X_j,Y]$ on \eqref{eq:yeqw1}, we have the following variant; 
\begin{equation}\label{eq:yeqw2}
\begin{aligned}
\sum_{i,j}\int_\Om D_jA_i(\X u)X_jYu X_i\varphi\dx &= \sum_{i,j}\int_\Om D_jA_i(\X u)[X_j,Y]u X_i\varphi\dx \\
&\quad + \sum_{i=1}^m  \int_\Om A_i(\X u) [X_i, Y]\varphi\dx
\end{aligned}
\end{equation}
for every $ \varphi \in C^\infty_0(\Om)$. 
Thus for $Y=X_l$, we have  
\begin{equation}\label{eq:yeqwXl}
\begin{aligned}
\sum_{i,j}\int_\Om D_jA_i(\X u)X_jX_lu X_i\varphi\dx &= \sum_{i,j}\int_\Om D_jA_i(\X u)[X_j,X_l]u X_i\varphi\dx \\
&\quad + \sum_{i}  \int_\Om A_i(\X u) [X_i, X_l]\varphi\dx,
\end{aligned}
\end{equation} 
and for $Y= T_k$, using integral by parts on the last term, we have 
\begin{equation}\label{eq:Teq}
\begin{aligned}
\sum_{i,j}\int_\Om D_jA_i(\X u)X_jT_ku X_i\varphi\dx &= \sum_{i,j}\int_\Om D_jA_i(\X u)[X_j,T_k]u X_i\varphi\dx \\
&\quad - \sum_{i,j}\int_\Om  \varphi D_jA_i(\X u) [X_i, T_k]X_j u\dx.
\end{aligned}
\end{equation}



\subsection{Caccioppoli type inequalities}\label{subsec:Caccioppoli type inequalities} 
Here we provide Caccioppoli type estimates satisfied by the derivatives of a weak solution $u$ of equation \eqref{eq:maineq}. 

\begin{Lem}\label{lem:cacci T}
For any $\beta\geq 0$ and $\eta\in C^\infty_0(\Omega)$, there exists 
$c = c(N,L)>0$ such that 
\begin{equation}\label{eq:cacci T}
\begin{aligned}
\int_\Omega \eta^2\F(|\X u|) |Tu|^\beta |\X Tu|^2\,
dx\,&\leq\, \frac{c}{\tau}(\beta+1)^2\int_\Omega
(\eta^2 +|\X\eta|^2)  \F(|\X u|)|Tu|^\beta
\big(|\X u|^2+|Tu|^2 \big)\dx\\
&\qquad + \tau \int_\Om \eta^2 \F(|\X u|)|Tu|^\beta |\XX u|^2\dx,
\end{aligned}
\end{equation}
for any arbitrary $0<\tau<1$. 
\end{Lem}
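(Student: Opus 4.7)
The plan is to test the differentiated identity \eqref{eq:Teq} (with $Y=T_k$) against $\varphi_k := \eta^2 |Tu|^\beta T_k u$ and sum over $k\in\{1,\ldots,n\}$. The apriori regularity \eqref{eq:ap reg} together with the standard approximation $|Tu|^\beta\leadsto(|Tu|^2+\sigma)^{\beta/2}$ (then $\sigma\downarrow 0$) makes $\varphi_k$ admissible.

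Leibniz expansion of $X_i\varphi_k$ splits the LHS of \eqref{eq:Teq} into three pieces. The leading one is the good term
\[
I_0 := \sum_{i,j,k}\int_\Om \eta^2|Tu|^\beta D_jA_i(\X u)\,X_jT_k u\,X_iT_k u\,dx\ \geq\ \int_\Om \eta^2\F(|\X u|)|Tu|^\beta|\X Tu|^2\,dx
\]
by the lower bound in \eqref{eq:str}. The piece stemming from $X_i(|Tu|^\beta)\,T_k u$, after summing in $k$ and using the identity $\sum_k T_ku\,X_jT_k u=\tfrac12 X_j|Tu|^2$, reduces to $\tfrac{\beta}{4}\int\eta^2|Tu|^{\beta-2}\langle DA(\X u)\cdot X|Tu|^2,\,X|Tu|^2\rangle\,dx\geq 0$, which I discard by positivity. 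The remaining cross term with $\eta X_i\eta$ is controlled by $|\langle DA(\X u)\cdot a,b\rangle|\leq L\F(|\X u|)|a||b|$ and Young's inequality, contributing a small multiple of $\int \eta^2\F|Tu|^\beta|\X Tu|^2$ plus a term of the claimed RHS form.

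On the RHS of \eqref{eq:Teq}, the commutator $[X_j,T_k]u$ in the first term is bounded by $c(|\X u|+|Tu|)$ via \eqref{eq:comm}--\eqref{eq:normbounds}, while $[X_i,T_k]X_ju$ in the second is bounded by $c(|\XX u|+|\X Tu|+|\X u|+|Tu|)$ via \eqref{eq:XTX}. Applying Young's inequality term by term produces: (i) $|\X Tu|^2$-contributions with small coefficient, to be absorbed into $I_0$; (ii) the term $\tau\int\eta^2\F|Tu|^\beta|\XX u|^2$, obtained by a Young split on $|\XX u|$ with parameter $\tau$ (which is where the factor $1/\tau$ on the companion RHS term originates); and (iii) terms of the RHS form in \eqref{eq:cacci T}. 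After choosing the small coefficients so that the total $|\X Tu|^2$-contribution on the right is at most $\tfrac12 I_0$, \eqref{eq:cacci T} follows with $c=c(N,L)$.

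The main obstacle is keeping the $\beta$-dependence sharp. The factor $(\beta+1)^2$ traces precisely to Young's inequality applied to the product $\beta|Tu|^{\beta-1}|[X_j,T_k]u|\cdot|\X Tu|\cdot|Tu|$ arising from the $X_i(|Tu|^\beta)$-piece of the first RHS term of \eqref{eq:Teq}; squaring the prefactor $\beta$ yields $c\beta^2$. The algebraic cancellation $\sum_k T_k u\,X_jT_k u=\tfrac12 X_j|Tu|^2$ is essential, since it is what renders the analogous LHS piece a manifestly nonnegative quadratic form in $X|Tu|^2$ and so avoids a Young estimate there that would otherwise destroy the $\beta$-dependence.
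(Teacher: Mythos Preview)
Your proposal is correct and follows essentially the same route as the paper: test \eqref{eq:Teq} with $\varphi_k=\eta^2|Tu|^\beta T_ku$, extract the coercive term $I_0$ together with the nonnegative $\beta\int\eta^2\F|Tu|^\beta|\X(|Tu|)|^2\,dx$ on the left via \eqref{eq:str}, and estimate the three remaining pieces ($\eta X_i\eta$ cross term, the $[X_j,T_k]u$ term, and the $[X_i,T_k]X_ju$ term) using \eqref{eq:comm}, \eqref{eq:XTX} and Young's inequality, with the parameter $\tau$ appearing in the split of $|\XX u|$ from the last piece and the factor $(\beta+1)^2$ arising from the $X_i(|Tu|^\beta)$ contribution of the $[X_j,T_k]u$ term. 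The only cosmetic difference is that the paper moves the $\eta X_i\eta$ piece to the right (its $R_1$) rather than treating it as a perturbation of the left, but the estimates are identical.
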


\begin{proof}
In \eqref{eq:Teq}, we choose $\varphi=\varphi_k=\eta^2|Tu|^\beta T_ku $ on \eqref{eq:Teq} and sum over $k\in\{1,\ldots,n\}$, 
 to obtain the following, 
\begin{equation}\label{eq:t1}
  \begin{aligned}
 \sum_{i,j,k}\int_\Omega \eta^2 &|Tu|^\beta D_jA_i(\X u)X_jT_k uX_iT_ku\, dx \\
   +\ \beta  &\sum_{i,j,k}\int_\Omega  \eta^2 | Tu|^{\beta-1}T_k u D_jA_i(\X u)X_jT_k uX_i(|Tu|)\dx \\
   &=  -2\sum_{i,j,k}\int_\Omega \eta\, |Tu|^\beta T_k u D_jA_i(\X u)X_jT_k uX_i\eta\, dx\\
&\qquad + \sum_{i,j,k}  \int_\Om D_jA_i(\X u)[X_j,T_k]u X_i\varphi_k\dx \\
&\qquad + \sum_{i,j,k}    \int_\Om  \varphi_k D_jA_i(\X u) [X_i, T_k]X_j u\dx\\
&= R_1 +R_2 +R_3.
  \end{aligned}
 \end{equation}
Then, we use the structure condition \eqref{eq:str} to estimate both 
sides of \eqref{eq:t1}. The left hand side of the above is estimated as 
\begin{equation}\label{eq:r0}
  \begin{aligned}
 \text{LHS of \eqref{eq:t1}} &\geq \int_\Omega \eta^2\F(|\X u|) |Tu|^\beta |\X Tu|^2\dx + \beta \int_\Omega \eta^2\F(|\X u|) |Tu|^\beta |\X(|Tu|)|^2\dx\\
&\geq \int_\Omega \eta^2\F(|\X u|) |Tu|^\beta |\X Tu|^2\dx.
  \end{aligned}
 \end{equation}
We estimate the right hand side of \eqref{eq:t1}, one by one. 

Using \eqref{eq:str} and Young's inequality, we have
\begin{equation}\label{eq:r1}
  \begin{aligned}
 |R_1| &\leq c\int_\Omega |\eta| \F(|\X u|) |Tu|^{\beta+1} |\X Tu||\X \eta|\dx \\
& \leq \eps \int_\Omega \eta^2\F(|\X u|) |Tu|^\beta |\X Tu|^2\dx + \frac{c}{\eps}\int_\Omega
|\X\eta|^2  \F(|\X u|)|Tu|^{\beta+2}\dx.
  \end{aligned}
 \end{equation}

Notice that 
\begin{equation}\label{eq:r2}
  \begin{aligned}
 R_2 &= \sum_{i,j,k}  \int_\Om \eta^2 D_jA_i(\X u)[X_j,T_k]u X_i(|Tu|^\beta T_ku)\dx \\
&\ + 2\sum_{i,j,k}  \int_\Om \eta D_jA_i(\X u)[X_j,T_k]u |Tu|^\beta T_ku X_i\eta\dx = R_{2,1} + R_{2,2}. 
  \end{aligned}
 \end{equation}
Using \eqref{eq:str} and \eqref{eq:comm} followed by Young's inequality, we have 
\begin{equation}\label{eq:r21}
  \begin{aligned}
|R_{2,1}| &\leq c(\beta+1)\int_\Om \eta^2 \F(|\X u|) \big(|\X u|+|Tu| \big)|Tu|^\beta |\X Tu|\dx\\
&\leq \eps \int_\Omega \eta^2\F(|\X u|) |Tu|^\beta |\X Tu|^2\dx\\
&\quad + \frac{c}{\eps}(\beta+1)^2 \int_\Omega \eta^2\F(|\X u|) |Tu|^\beta
\big(|\X u|^2+|Tu|^2 \big)\dx
  \end{aligned}
 \end{equation}
Similarly, we have
\begin{equation}\label{eq:r22}
  \begin{aligned}
|R_{2,2}| &\leq c\int_\Omega |\eta| \F(|\X u|) |Tu|^{\beta+1} \big(|\X u|+|Tu| \big) |\X \eta|\dx
  \end{aligned}
 \end{equation}
which is clearly majorised by the right hand side of \eqref{eq:cacci T} from Young's inequality. 

Then, $R_3$ is estimated using 
\eqref{eq:str},\eqref{eq:XTX} and Young's inequality on each term as 
\begin{equation}\label{eq:r3}
  \begin{aligned}
|R_3| &\leq c\int_\Omega \eta^2 \F(|\X u|) |Tu|^{\beta+1} \big(|\XX u|+|\X Tu| + |\X u| + |Tu|\big)\dx\\
&\leq \eps \int_\Omega \eta^2\F(|\X u|) |Tu|^\beta |\X Tu|^2\dx + c\Big(\frac{1}{\eps}+\frac{1}{\tau}\Big)\int_\Omega
\eta^2  \F(|\X u|)|Tu|^\beta \big(|\X u|^2+|Tu|^2 \big)\dx\\
&\qquad + \tau \int_\Om \eta^2 \F(|\X u|)|Tu|^\beta |\XX u|^2\dx,
  \end{aligned}
 \end{equation}
for any $0<\tau<1$. Combining \eqref{eq:r1},\eqref{eq:r21},\eqref{eq:r22},\eqref{eq:r3} with $\eps = 1/6$, it is easy to obtain 
\eqref{eq:cacci T} and the proof is finished. 
\end{proof}
\begin{Rem}\label{rem:groupcase1}
For the special case when $X_1,\ldots,X_m$ are left invariant with respect to a step $2$ Carnot Group, we have $ [X_i,T_k]=0$ and hence, it corresponds to the case of $\tau=0$ in \eqref{eq:cacci T}, i.e.  
$$ \int_\Omega \eta^2\F(|\X u|) |Tu|^\beta |\X Tu|^2\,
dx\leq c(\beta+1)^2\int_\Omega |\X\eta|^2  \F(|\X u|)|Tu|^{\beta +2}\dx.$$
Nevertheless, it does not make any difference in the subsequent results. 
\end{Rem}
The following is the Caccioppoli type estimate of the vector fields. 
\begin{Lem}\label{cacci:Xu}
For any $\beta\geq 0$ and $\eta\in C^\infty_0(\Omega)$, there exists 
$c= c(N,L)>0$ such that 
\begin{equation}\label{eq:cacci Xu}
\begin{aligned}
\int_{\Omega}\eta^2\, \F(|\X u|) |\X u|^\beta|\X\X u|^2\, dx\le&
\ c\int_\Omega \big(| \X \eta|^2+|\eta T\eta|\big) \F(|\X u|) |\X u|^{\beta+2}\, dx\\
&+c(\beta+1)^4\int_\Omega\eta^2\,  \F(|\X u|) |\X u|^\beta\big(|\X u|^2+|Tu|^2 \big)\, dx.
\end{aligned}
\end{equation}
\end{Lem}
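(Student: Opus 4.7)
The plan is to test the identity~\eqref{eq:yeqwXl} with $\varphi=\varphi_l=\eta^{2}|\X u|^{\beta}X_l u$ and sum over $l\in\{1,\ldots,m\}$. Applying the product rule to $X_i\varphi_l$ and using the ellipticity in~\eqref{eq:str} on the leading quadratic form $\sum_{i,j,l}\int\eta^{2}|\X u|^{\beta}D_jA_i(\X u)X_jX_l u\, X_iX_l u\,dx$, the left-hand side produces $\int\eta^{2}\F(|\X u|)|\X u|^{\beta}|\XX u|^{2}dx$ plus a nonnegative contribution from the $\beta|\X u|^{\beta-2}$ factor, together with cutoff terms that are absorbed by Young's inequality into $\epsilon\int\eta^{2}\F|\X u|^{\beta}|\XX u|^{2}dx$ and $\int|\X\eta|^{2}\F|\X u|^{\beta+2}dx$.

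The right-hand side of~\eqref{eq:yeqwXl} splits into two pieces. The first, $\sum_{i,j,l}\int D_jA_i(\X u)[X_j,X_l]u\,X_i\varphi_l\,dx$, is handled directly by the pointwise bound $|[X_j,X_l]u|\le c(|\X u|+|Tu|)$ coming from~\eqref{eq:comm} and~\eqref{eq:normbounds}, followed by Cauchy--Schwarz and Young's inequality. The $\beta$-factor from the product rule on $X_i\varphi_l$ contributes at most $(\beta+1)^{2}$ here, producing exactly terms of the form displayed in~\eqref{eq:cacci Xu} together with an absorbable $\epsilon\int\eta^{2}\F|\X u|^{\beta}|\XX u|^{2}dx$.

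The main obstacle is the second piece $\sum_{i,l}\int A_i(\X u)[X_i,X_l]\varphi_l\,dx$. Expanding $[X_i,X_l]$ via~\eqref{eq:comm}, the $X$-component is a first-order derivative on $\varphi_l$ that is bounded directly by the product rule. For the $T_k$-component, I would distribute $T_k$ over the three factors of $\varphi_l$: when $T_k$ falls on $\eta^{2}$ it produces the $|\eta T\eta|$ term of~\eqref{eq:cacci Xu}; when it falls on $|\X u|^{\beta}$ or $X_l u$, the resulting $T_kX_j u$ is rewritten as $X_jT_k u+[T_k,X_j]u$ with the commutator controlled by $|\X u|+|Tu|$ through~\eqref{eq:comm}, and the remaining $X_j$ is integrated by parts. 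After this second integration by parts, $X_j$ falls either on $A_i(\X u)$ (producing a $\XX u$ factor), on $\eta$ (producing $|\X\eta|$), on $|\X u|^{\beta}$ (producing another $\XX u$), or on smooth bounded coefficients; every resulting term is absorbable via Cauchy--Schwarz and Young.

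The step-two hypothesis is essential in this last integration by parts: \eqref{eq:comm} keeps $[X_j,T_k]$ first-order in $\X,T$, so no cascade into $|\X Tu|^{2}$ terms occurs on the right, which is precisely what permits the clean form of the estimate~\eqref{eq:cacci Xu}. Choosing $\epsilon$ sufficiently small absorbs every $|\XX u|^{2}$-type contribution on the left, and tracking the $\beta$-factor produced by each of the two successive product-rule expansions (one in $T_k\varphi_l$, one in the subsequent $X_j$-differentiation) yields the stated $(\beta+1)^{4}$ dependence.
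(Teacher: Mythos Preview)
Your proposal is correct and follows essentially the same approach as the paper: test \eqref{eq:yeqwXl} with $\varphi_l=\eta^{2}|\X u|^{\beta}X_l u$, estimate the commutator piece $\sum_{i,l}\int A_i(\X u)[X_i,X_l]\varphi_l\,dx$ by commuting $T_kX_j=X_jT_k+[T_k,X_j]$ and integrating by parts, then absorb via Young. The only cosmetic difference is that the paper keeps $[X_i,X_l]$ intact and writes $[X_i,X_l]X_ju=X_j[X_i,X_l]u+[[X_i,X_l],X_j]u$ (invoking \eqref{eq:XXX} for the double commutator) rather than first expanding $[X_i,X_l]$ in the basis $\{T_k,X_m\}$ as you do; the resulting terms and the $(\beta+1)^4$ bookkeeping are identical.
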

\begin{proof}
Let $\varphi_l= \eta^2 |\X u|^\beta X_lu$ with $\eta\in C^\infty_0(\Omega)$ and $\beta\geq 0$. Notice that 
$$X_i\varphi_l =  \eta^2 |\X u|^\beta X_iX_lu + \beta  \eta^2 |\X u|^{\beta-1} X_lu X_i(|\X u|) + 2\eta |\X u|^\beta X_lu X_i\eta.$$ 
We take $Y=X_l$ and $\varphi=\varphi_l$ in \eqref{eq:yeqw2} and take summation over $l\in \{1,\ldots,m\}$ to obtain 
\begin{equation}\label{eq:xu2}
\begin{aligned}
\sum_{i,j,l}&\int_{\Omega}\eta^{2}|\X u|^\beta 
D_jA_i(\X u)X_jX_luX_iX_lu\dx\\
& +\ \beta\sum_{i,j,l}\int_\Omega \eta^2 |\X u|^{\beta-1} X_luD_jA_i(\X u)X_jX_lu X_i(|\X u|)\dx\\
& =\ \  \sum_{i,j,l}\int_{\Omega}\eta^{2}|\X u|^\beta 
D_jA_i(\X u)[X_j,X_l]u X_iX_lu\dx \\
&\quad + \beta\sum_{i,j,l}\int_\Omega \eta^2 |\X u|^{\beta-1} X_luD_jA_i(\X u)[X_j,X_l]u X_i(|\X u|)\dx\\
&\qquad-2\sum_{i,j,l}\int_{\Omega}\eta |\X u|^\beta X_luD_jA_i(\X u)X_lX_juX_i\eta\dx\\
&\qquad\ \ + \sum_{i,l}  \int_\Om A_i(\X u) [X_i, X_l]\varphi_l\dx  = J_1+J_2+J_3+J_4,
\end{aligned}
\end{equation}
where $J_i$ are the consecutive items of the right hand side of the above.

First, we estimate the left hand side of \eqref{eq:xu2} using \eqref{eq:str} to have 
\begin{equation}\label{eq:xuleft}
  \begin{aligned}
 \text{LHS of \eqref{eq:xu2}}  &\geq  \int_\Om \eta^2 \F(|\X u|) |\X u|^\beta |\X\X u|^2\dx + 
\beta \int_\Omega \eta^2\F(|\X u|) |\X u|^\beta |\X(|\X u|)|^2\dx\\
&\geq \int_\Omega \eta^2\F(|\X u|) |\X u|^\beta |\X\X u|^2\dx.
  \end{aligned}
 \end{equation}

For the right hand side of \eqref{eq:xu2}, we claim that the following holds,
\begin{equation}\label{eq:xuclaim}
\begin{aligned}
|J_i| &\leq \eps \int_\Om \eta^2 \F(|\X u|) |\X u|^\beta |\X\X u|^2\dx
+\frac{c}{\eps}\int_\Omega \big(| \X \eta|^2+|\eta T\eta|\big) \F(|\X u|) |\X u|^{\beta+2}\, dx\\
&\qquad + \frac{c}{\eps} (\beta+1)^4\int_\Omega\eta^2\,  \F(|\X u|) |\X u|^\beta \big(|\X u|^2+|Tu|^2 \big)\, dx,
\end{aligned}
\end{equation}
for any $0<\eps<1$ and $i\in\{1,2,3,4\}$, where $c= c(N,L)>0$. Then \eqref{eq:xuclaim} with 
a choice of small enough $\eps=\eps(N,L)>0$ together with \eqref{eq:xuleft} would imply \eqref{eq:cacci Xu}, thereby finishing  
the proof. We shall prove \eqref{eq:xuclaim} by estimating each $J_i$ of \eqref{eq:xu2}, one by one. 

Using \eqref{eq:str}, \eqref{eq:comm} and Young's inequality, note that 
\begin{equation}\label{j12}
\begin{aligned}
|J_1|+|J_2| &\leq c(\beta+1)\int_\Om \eta^2 \F(|\X u|) |\X u|^\beta |\X \X u| \big(|\X u|+|Tu| \big)\dx \\
&\leq \eps \int_\Om \eta^2 \F(|\X u|) |\X u|^\beta |\X\X u|^2\dx \\
&\qquad+ \frac{c}{\eps} (\beta+1)^2\int_\Omega\eta^2 \F(|\X u|) |\X u|^\beta\big(|\X u|^2+|Tu|^2 \big)\, dx,
\end{aligned}
\end{equation}
and the claim \eqref{eq:xuclaim} follows for $i=1,2$. Similarly, for $J_3$ we have 
\begin{equation}\label{j3}
\begin{aligned}
|J_3| &\leq c\int_\Om |\eta| \F(|\X u|) |\X u|^{\beta+1} |\X \X u| |\X \eta|\dx \\
&\leq \eps \int_\Om \eta^2 \F(|\X u|) |\X u|^\beta |\X\X u|^2\dx 
+\frac{c}{\eps}\int_\Omega | \X \eta|^2 \F(|\X u|) |\X u|^{\beta+2}\, dx.
\end{aligned}
\end{equation}
To estimate $J_4$, first note that 
$$ [X_i, X_l]\varphi_l = \eta^2 |\X u|^\beta [X_i, X_l]X_lu + \beta \eta^2 |\X u|^{\beta-1}X_lu [X_i, X_l](|\X u|)
+ 2\eta |\X u|^\beta X_lu [X_i, X_l]\eta,$$
and we rewrite $J_4$ as 
\begin{equation}\label{j4}
\begin{aligned}
J_4 &=  \sum_{i,l}   \int_\Om \eta^2 |\X u|^\beta  A_i(\X u) [X_i, X_l]X_lu \dx\\
&\quad+\beta\sum_{i,l}  \int_\Om \eta^2 |\X u|^{\beta-1}X_lu A_i(\X u) [X_i, X_l](|\X u|)\dx\\
&\quad+2\sum_{i,l}  \int_\Om \eta |\X u|^\beta X_lu A_i(\X u) [X_i, X_l]\eta\dx = J_{4,1}+ J_{4,2}+ J_{4,3}.
\end{aligned}
\end{equation}
The estimate of the last term is straightforward. Using \eqref{eq:str} and \eqref{eq:comm}, we have
\begin{equation}\label{j43}
|J_{4,3}| \leq c\int_\Om (|\eta T\eta|+|\eta \X\eta|) \F(|\X u|) |\X u|^{\beta+2} \dx,
\end{equation}
which is majorised by the right hand side of \eqref{eq:xuclaim}.
To estimate the rest of \eqref{j4}, we shall use $[X_i, X_l]X_lu= X_l[X_i, X_l]u + [[X_i,X_l],X_l]u$ and 
$$ [X_i, X_l](|\X u|)= |\X u|^{-1}\sum_{k=1}^m X_ku [X_i, X_l]X_ku =|\X u|^{-1}\sum_{k=1}^m X_ku \big(X_k[X_i, X_l]u+[[X_i,X_l],X_k]u\big),$$
to split the first two terms as $J_{4,1}=J_{4,1,1}+J_{4,1,2} $ and $J_{4,2}=J_{4,2,1}+J_{4,2,2}$ so that the second terms i.e. $J_{4,1,2}$ and $J_{4,2,2}$ contain the terms with the commutator of commutators.

We rewrite the first terms by integral by parts as 
\begin{equation}\label{j41121}
\begin{aligned}
J_{4,1,1} + J_{4,2,1} &=  \sum_{i,l}  \int_\Om \eta^2 |\X u|^\beta A_i(\X u) X_l[X_i, X_l]u \dx\\
&\qquad  +\beta\sum_{i,l,k}  \int_\Om \eta^2 |\X u|^{\beta-2}X_lu A_i(\X u) X_ku X_k[X_i, X_l]u\dx\\
&= - \sum_{i,l} \int_\Om X_l\Big( \eta^2 |\X u|^\beta A_i(\X u)\Big) [X_i, X_l]u \dx\\
&\qquad  -\beta\sum_{i,l,k}  \int_\Om X_k\Big(\eta^2 |\X u|^{\beta-2}X_lu X_ku A_i(\X u) \Big)[X_i, X_l]u\dx. 
\end{aligned}
\end{equation}
Then, further computation of the terms in \eqref{j41121} followed by the use of \eqref{eq:str} and \eqref{eq:comm} yields
\begin{equation}\label{j41121est}
\begin{aligned}
|J_{4,1,1}| + |J_{4,2,1}| &\leq c(\beta+1)^2 \int_\Om \eta^2  \F(|\X u|) |\X u|^{\beta} |\X \X u| \big(|\X u|+|Tu| \big)\dx\\
&\qquad + c(\beta+1) \int_\Om |\eta|\F(|\X u|) |\X u|^{\beta+1} |\X\eta| \big(|\X u|+|Tu| \big)\dx\\
&\leq \eps \int_\Om \eta^2 \F(|\X u|) |\X u|^\beta |\X\X u|^2\dx\\
&\qquad + \frac{c}{\eps} (\beta+1)^4\int_\Omega\eta^2\,  \F(|\X u|) |\X u|^\beta\big(|\X u|^2+|Tu|^2 \big)\, dx\\
&\qquad \ + c\int_\Omega |\X \eta|^2 \F(|\X u|) |\X u|^{\beta+2}\, dx,
\end{aligned}
\end{equation}
where the latter inequality of the above follows by using Young's inequality on both items. Now we are only left with
\begin{equation}\label{j41222}
\begin{aligned}
J_{4,1,2} + J_{4,2,2} &=  \sum_{i,l}  \int_\Om \eta^2 |\X u|^\beta A_i(\X u)  [[X_i,X_l],X_l]u \dx\\
&\qquad  +\beta\sum_{i,l,k}  \int_\Om \eta^2 |\X u|^{\beta-2}X_lu A_i(\X u) X_ku  [[X_i,X_l],X_k]u\dx\\
\end{aligned}
\end{equation}
To estimate \eqref{j41222}, we use \eqref{eq:XXX} to have 
\begin{equation}\label{j41222est}
\begin{aligned}
|J_{4,1,2}| + |J_{4,2,2}| \leq c(\beta+1) \int_\Om \eta^2 \F(|\X u|) |\X u|^{\beta+1}\big( |\X u| + |Tu|\big)\dx,
\end{aligned}
\end{equation}
which is estimated easily just as in \eqref{j41121est}. 
Thus, \eqref{j41121est}, \eqref{j41222} and \eqref{j43} together 
imply that the claim \eqref{eq:xuclaim} holds for $i=4$ as well. This completes the proof of the the claim \eqref{eq:xuclaim} and hence the proof of the lemma. 
\end{proof}

The following is a reverse type inequality similar to earlier papers, see \cite{Min-Z-Zhong, Zhong, Muk0}, etc. 
\begin{Lem}\label{lem:rev}
For any $\beta\ge 2$ and all non-negative $\eta\in C^\infty_0(\Omega)$, we have
\begin{equation}\label{eq:rev}
\begin{aligned}
\int_\Omega \eta^{\beta+2} \F(|\X u|)| Tu|^{\beta}|\X\X u|^2\, dx 
\le
c(\beta+1)^4 K_\eta
\int_{\Omega}\eta^{\beta}\weight |\X u|^2 | Tu|^{\beta-2}| \X\X u|^2 \dx
\end{aligned}
\end{equation}
for some constant $c=c(N,L)>0$, where $K_\eta =\| \eta\| _{L^\infty}^2 + \| \X\eta\| _{L^\infty}^2$. 
\end{Lem}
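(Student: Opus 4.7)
The key is to trade a factor $|Tu|^2$ in $|Tu|^\beta$ for $|\X u|^2$ by testing the differentiated equation with a suitable $\varphi$, exploiting the commutator structure $T_k = [X_{l_k}, X_{j_k}]$ and the pointwise bound $|Tu|^2 \leq c|\XX u|^2$ from \eqref{eq:ineqT}. I would test equation \eqref{eq:yeqwXl} with $Y = X_l$ against $\varphi_l := \eta^{\beta+2}|Tu|^\beta X_l u$, summed over $l \in \{1,\ldots,m\}$. By the structure condition \eqref{eq:str}, the principal term of the LHS,
\[\sum_{i,j,l}\int D_jA_i(\X u)\,X_jX_lu\,\eta^{\beta+2}|Tu|^\beta\,X_iX_lu\,dx,\]
majorises $I = \int \eta^{\beta+2}\F |Tu|^\beta |\XX u|^2\,dx$ and so gives the LHS of \eqref{eq:rev} as a lower bound, while the remaining pieces of the LHS together with the entire RHS of \eqref{eq:yeqwXl} must be estimated by $\varepsilon I$ plus $c(\beta+1)^4 K_\eta J$, where $J := \int \eta^\beta \F|\X u|^2|Tu|^{\beta-2}|\XX u|^2\,dx$.

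The auxiliary terms fall into three types. \emph{(i)} Terms from the derivative of $\eta^{\beta+2}$ produce integrals of the form $\int \eta^{\beta+1}|\X\eta|\,\F|\XX u||Tu|^\beta|\X u|\,dx$; Young's inequality splits these into $\varepsilon I$ plus $c(\beta{+}1)^2\int \eta^\beta|\X\eta|^2\F|Tu|^\beta|\X u|^2 dx$, and the pointwise $|Tu|^2\leq c|\XX u|^2$ converts the latter integral to $c\,K_\eta J$, exactly the target form. \emph{(ii)} Terms arising from $X_i(|Tu|^\beta)=\beta|Tu|^{\beta-2}\sum_{k'}T_{k'}u\,X_iT_{k'}u$ produce $|\X Tu|$ factors; after further Young splitting, they are bounded using the Caccioppoli estimate \eqref{eq:cacci T} applied with $\tilde\eta = \eta^{(\beta+2)/2}$ (so that $\tilde\eta^2=\eta^{\beta+2}$ and $|\X\tilde\eta|^2\lesssim(\beta{+}1)^2\eta^\beta|\X\eta|^2$) and a small parameter $\tau$; the resulting $\tau$-multiple of $I$ is absorbed on the left. \emph{(iii)} The commutator terms $[X_j,X_l]u$ on the RHS of \eqref{eq:yeqwXl}, and $[X_i,X_l]\varphi_l$ (after rewriting the second group on the RHS of \eqref{eq:yeqwXl} via \eqref{eq:comm}), expand into combinations of $|\X u|$ and $|Tu|$ multiplying $X_i\varphi_l$, and reduce to the forms already handled.

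The main obstacle will be category \emph{(ii)}. Applying \eqref{eq:cacci T} with $\tilde\eta=\eta^{(\beta+2)/2}$ produces, on the RHS, $(c(\beta{+}1)^4/\tau)\,K_\eta\int \eta^\beta \F|Tu|^\beta(|\X u|^2+|Tu|^2)\,dx+\tau I$; the $|\X u|^2$-part is immediately controlled by $c\,K_\eta J$ via $|Tu|^\beta\le c|Tu|^{\beta-2}|\XX u|^2$, but the $|Tu|^2$-part yields $\int\eta^\beta\F|Tu|^{\beta+2}\,dx$, a quantity whose weight $\eta^\beta$ is \emph{larger} than $\eta^{\beta+2}$ pointwise, and so is not directly dominated by $I$. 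To handle it, one again invokes $|Tu|^2\le c|\XX u|^2$ to replace $|Tu|^{\beta+2}$ by $c|Tu|^{\beta-2}|\XX u|^4$, and then uses a further Young splitting $|\XX u|^4=|\XX u|^2\cdot|\XX u|^2$ combined with absorption into $\varepsilon I$ and majorisation by $c\,K_\eta J$ of the remaining piece, exploiting once more $|Tu|^2\le c|\XX u|^2$ in the opposite direction. The final $(\beta{+}1)^4$ constant emerges as the product of the $(\beta{+}1)^2$ from differentiating $|Tu|^\beta$ and the $(\beta{+}1)^2$ in \eqref{eq:cacci T}. Collecting all contributions and choosing $\varepsilon,\tau$ small enough yields \eqref{eq:rev}.
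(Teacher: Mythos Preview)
Your overall testing strategy (use $\varphi_l=\eta^{\beta+2}|Tu|^\beta X_lu$ in the $X_l$-differentiated equation, sum over $l$, and show each auxiliary term is bounded by $\varepsilon I + c(\beta+1)^4 K_\eta J$) is exactly right, and types (i) and (iii) work as you say. The gap is in your treatment of type~(ii), the $|\X Tu|$ terms.

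After Young's inequality you propose to control $\int \eta^{\beta+2}\F|Tu|^\beta|\X Tu|^2\,dx$ by applying \eqref{eq:cacci T} with $\tilde\eta=\eta^{(\beta+2)/2}$. The $|\X\tilde\eta|^2$ part of the right-hand side of \eqref{eq:cacci T} then contributes a multiple of $\int \eta^{\beta}\,\F|Tu|^{\beta+2}\,dx$, which (as you note) after one use of $|Tu|^2\le c|\XX u|^2$ becomes $\int \eta^{\beta}\,\F|Tu|^{\beta}|\XX u|^2\,dx$. This integral carries only $\eta^{\beta}$, not $\eta^{\beta+2}$, so it is genuinely larger than $I$ and cannot be absorbed; and it is not $J$ either, since $J$ has $|\X u|^2|Tu|^{\beta-2}$ where this has $|Tu|^{\beta}$. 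Your suggested rescue---``use $|Tu|^2\le c|\XX u|^2$ in the opposite direction''---is not available: there is no pointwise bound of $|\XX u|$ by $|Tu|$. Writing $|\XX u|^4=|\XX u|^2\cdot|\XX u|^2$ and applying Young again only reproduces the same uncontrolled term.

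The fix the paper uses is to allocate the powers of $\eta$ \emph{asymmetrically} in the initial Young split of the $I_2$-type term $c\beta\int\eta^{\beta+2}\F|\X u|\,|Tu|^{\beta-1}|\XX u|\,|\X Tu|\,dx$: write $\eta^{\beta+2}=\eta^{(\beta+4)/2}\cdot\eta^{\beta/2}$ and take the Young parameter to be $\varepsilon/((\beta+1)^2K_\eta)$, obtaining
\[
\frac{\varepsilon}{(\beta+1)^2K_\eta}\int\eta^{\beta+4}\F|Tu|^\beta|\X Tu|^2\,dx
\;+\;\frac{c\beta^2(\beta+1)^2K_\eta}{\varepsilon}\int\eta^{\beta}\F|\X u|^2|Tu|^{\beta-2}|\XX u|^2\,dx.
\]
The second integral is exactly $J$. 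For the first, one applies the Caccioppoli estimate for $T$ with cutoff $\eta^{(\beta+4)/2}$ (this is the estimate \eqref{weak3} in the paper); the $|\X\tilde\eta|^2$-loss then brings the weight back down to $\eta^{\beta+2}$, and the factor $(\beta+1)^2K_\eta$ it produces cancels the prefactor, leaving $c\varepsilon\int\eta^{\beta+2}\F|Tu|^\beta(|\X u|^2+|\XX u|^2)\,dx$. The $|\XX u|^2$-part is $c\varepsilon I$ and is absorbed; the $|\X u|^2$-part is $\le c\varepsilon K_\eta J$ after one use of \eqref{eq:ineqT}. This asymmetric $\eta$-allocation is the missing idea.
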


\begin{proof}
Recalling \eqref{eq:yeqw1} with $Y=X_l$ and $\varphi=\varphi_l$, we have
\begin{equation}\label{yeqX}
\sum_{i,j}\int_\Om D_jA_i(\X u)X_lX_ju X_i\varphi_l\dx = \sum_{i=1}^m  \int_\Om A_i(\X u) [X_i, X_l]\varphi_l\dx, 
\end{equation}
where we use $\varphi_l=\eta^{\beta+2}| Tu|^\beta X_lu$, where $\eta\in C^\infty_0(\Omega)$ is non-negative and $\beta\geq 2$. 
Note that 
$$
X_i\varphi_l=\eta^{\beta+2} | Tu|^\beta X_iX_l u+\beta \eta^{\beta+2}| Tu|^{\beta-1}
X_l uX_i(|Tu|)+(\beta+2)\eta^{\beta+1} X_i\eta| Tu|^\beta X_l u.$$
Using this on the above, we obtain 
\begin{equation}\label{1rv}
\begin{aligned}
\sum_{i,j}\int_\Om \eta^{\beta+2} &| Tu|^\beta D_jA_i(\X u)X_lX_ju  X_iX_l u\dx \\
&= -\beta \sum_{i,j}\int_\Om \eta^{\beta+2}| Tu|^{\beta-1}X_lu D_jA_i(\X u)X_lX_ju X_i(|Tu|)\dx\\
&\quad - (\beta+2)\sum_{i,j}\int_\Om \eta^{\beta+1} | Tu|^\beta X_l u D_jA_i(\X u)X_lX_ju X_i\eta\dx \\
&\qquad - \sum_{i,j}  \int_\Om \varphi_l D_jA_i(\X u) [X_i, X_l]X_j u\dx,
\end{aligned}
\end{equation}
where we have used integral by parts on the term in the right hand side of \eqref{yeqX}.
Now we use $X_iX_l = X_lX_i +[X_i,X_l]$ on the left hand side term and then sum over $l\in\{1,\ldots,m\}$ both sides of 
\eqref{1rv} to obtain 
 \begin{equation}\label{2rv}
\begin{aligned}
\sum_{i,j,l} \int_\Om \eta^{\beta+2} &| Tu|^\beta D_jA_i(\X u)X_lX_ju X_lX_i u\dx \\
&= -\sum_{i,j,l} \int_\Om \eta^{\beta+2}| Tu|^\beta D_jA_i(\X u)X_lX_ju [X_i,X_l]u\dx \\
&\quad -\beta \sum_{i,j,l} \int_\Om \eta^{\beta+2}| Tu|^{\beta-1}X_lu D_jA_i(\X u)X_lX_ju X_i(|Tu|)\dx\\
&\quad - (\beta+2)\sum_{i,j,l} \int_\Om \eta^{\beta+1} | Tu|^\beta X_l u D_jA_i(\X u)X_lX_ju X_i\eta\dx \\
&\qquad -  \sum_{i,j,l}  \int_\Om \eta^{\beta+2}| Tu|^\beta X_lu D_jA_i(\X u) [X_i, X_l]X_j u\dx\\
&= I_1+I_2+I_3+I_4.
\end{aligned}
\end{equation}
We will estimate both sides of \eqref{2rv} as follows. 

For the
left hand side, the structure condition \eqref{eq:str} implies that
$$
\text{LHS of \eqref{2rv}}
\ge \int_{\Omega} \eta^{\beta+2}\weight| Tu|^\beta |\XX u|^2\, dx.
$$
For the right hand side of \eqref{2rv}, we will show that for each item,
the following estimate holds,
\begin{equation}\label{claim1}
\begin{aligned}
| I_k|\ \le &\ c\eps \int_{\Omega}\eta^{\beta+2}\weight| Tu|^{\beta}|\X\X u|^2\, dx\\
&\ + \frac{c}{\eps}(\beta+1)^4 K_\eta
\int_{\Omega}\eta^{\beta}\weight |\X u|^2 | Tu|^{\beta-2}| \X\X u|^2 \dx,
\end{aligned}
\end{equation}
for $k=1,2,3, 4$. Then, a choice of a small enough $\eps=\eps(N,L)>0$, completes the proof. 

To prove the claim \eqref{claim1}, first we note that using \eqref{eq:ineqT} on \eqref{eq:cacci T} of Lemma \ref{lem:cacci T} along with minor modifications, we can obtain 
\begin{equation}\label{weak3}
\begin{aligned}
\int_{\Omega}\eta^{\beta+4}\weight 
| Tu|^\beta|\X Tu|^2\dx
\le c(\beta+1)^2 K_\eta \int_{\Omega} \eta^{\beta+2}\weight|
Tu|^{\beta}\big( |\X u|^2 + |\XX u|^2\big)\dx
\end{aligned}
\end{equation}
where $K_\eta =\| \eta\| _{L^\infty}^2 + \| \X\eta\| _{L^\infty}^2$.
This shall be used multiple times 
below.

First, to prove that \eqref{claim1} holds for $I_1$, we note that 
\begin{align*}
I_1 &=  -\sum_{i,j,l} \int_\Om \eta^{\beta+2}| Tu|^\beta D_jA_i(\X u)X_lX_ju [X_i,X_l]u\dx\\
&= - \sum_{i,l}\int_\Om \eta^{\beta+2}| Tu|^\beta X_l(A_i(\X u))[X_i,X_l]u\dx,
\end{align*}
from chain rule. Then 
integration by parts yields
\begin{equation*}
\begin{aligned}
I_1=& \sum_{i,l}\int_{\Omega} A_i(\X u)X_l\big(\eta^{\beta+2}| Tu|^\beta [X_i,X_l]u \big)\, dx\\
=&\sum_{i,l}\int_{\Omega}\eta^{\beta+2} A_i(\X u)X_l\big(| Tu|^\beta [X_i,X_l]u \big)\, dx\\
&-(\beta+2)\sum_{i,l}\int_{\Omega}\eta^{\beta+1}
A_i(\X u) X_l\eta \,| Tu|^\beta [X_i,X_l]u \, dx
=I_{1,1}+I_{1,2}.
\end{aligned}
\end{equation*}
Hence, we have 
\begin{equation}\label{i11}
\begin{aligned}
I_{1,1}&=\sum_{i,l}\int_{\Omega}\eta^{\beta+2} A_i(\X u)| Tu|^\beta X_l[X_i,X_l]u \, dx\\
&\quad +\beta \sum_{i,l}\int_{\Omega}\eta^{\beta+2} A_i(\X u)| Tu|^{\beta-1}[X_i,X_l]u \,X_l(|Tu|)\, dx\\
&= I_{1,1,1}+I_{1,1,2}.
\end{aligned}
\end{equation}
For $I_{1,1,1}$, using \eqref{eq:comm} and \eqref{eq:str}, we have 
\begin{equation*}
\begin{aligned}
| I_{1,1,1}| &\le c\int_{\Omega}\eta^{\beta+2}\weight |\X u| | Tu|^\beta| \X Tu|\dx 
+ c \int_{\Omega}\eta^{\beta+2}\weight |\X u| | Tu|^\beta | \XX u|\dx\\
&\leq \frac{\eps}{(\beta+1)^2 K_\eta}\int_\Omega \eta^{\beta+4}\weight
| Tu|^\beta|\X Tu|^2\,
\dx + \frac{c}{\eps} (\beta+1)^2 K_\eta 
\int_{\Omega}\eta^{\beta}\weight |\X u|^2 | Tu|^{\beta}\dx\\
&\quad + \eps \int_\Om \eta^{\beta+2}\weight | Tu|^\beta| \XX u|^2\dx 
+ \frac{c}{\eps}\int_\Om  \eta^{\beta+2}\weight | Tu|^\beta| \X u|^2\dx,
\end{aligned}
\end{equation*}
where the latter inequality is obtained by using Young's inequality on both terms. 
Then, using \eqref{weak3} to estimate the first term of the right hand side of the above, it is not hard to see that 
\begin{equation}\label{i111}
| I_{1,1,1}| \leq c\eps \int_\Om \eta^{\beta+2}\weight | Tu|^\beta| \XX u|^2\dx 
+  \frac{c}{\eps}(\beta+1)^2 K_\eta \int_\Om  \eta^{\beta}\weight | Tu|^\beta| \X u|^2\dx
\end{equation}
holds for $0<\eps<1,\ K_\eta =\| \eta\| _{L^\infty}^2 + \| \X\eta\| _{L^\infty}^2$ and some large enough $c=c(N,L)>0$. Thus, we have that 
$I_{1,1,1}$ satisfies the estimate of the claim \eqref{claim1}.

For $I_{1,1,2}$, we just use $|[X_i,X_l]u|\leq 2|\XX u|$ along with \eqref{eq:str} and Young's inequality to get
\begin{equation}\label{i112}
\begin{aligned}
| I_{1,1,2}|\ &\le \ c\beta \int_{\Omega}\eta^{\beta+2}\weight |\X u| | Tu|^{\beta-1}| \XX u| | \X Tu| \dx\\
&\leq\ \frac{\eps}{(\beta+1)^2 K_\eta}\int_\Omega \eta^{\beta+4}\weight
| Tu|^\beta|\X Tu|^2\,
dx\\
&\qquad + \frac{c}{\eps}\beta^2(\beta+1)^2 K_\eta 
\int_{\Omega}\eta^{\beta}\weight |\X u|^2 | Tu|^{\beta-2}| \X\X u|^2\, dx. 
\end{aligned}
\end{equation} 
Then, using \eqref{weak3} to estimate the first term similarly as before, it is easy to see that $I_{1,1,2}$ also satisfies the estimate of the claim \eqref{claim1}. 

For $I_{1,2}$, we similarly use \eqref{eq:str}, $|[X_i,X_l]u|\leq 2|\XX u|$ and Young's inequality to obtain 
\begin{equation}\label{i12}
\begin{aligned}
 | I_{1,2}| &\le  c(\beta+1)\int_{\Omega}\eta^{\beta+1}|\X\eta|\weight |\X u|| Tu|^{\beta}|\XX u|\dx\\
&\leq c\eps \int_\Om \eta^{\beta+2}\weight | Tu|^\beta| \XX u|^2\dx 
+  \frac{c}{\eps}(\beta+1)^2  \int_\Om  \eta^{\beta}|\X\eta|^2\weight | Tu|^\beta| \X u|^2\dx.
\end{aligned}
\end{equation} 
Combining \eqref{i111},\eqref{i112} and \eqref{i12}, we conclude that the claim \eqref{claim1} holds for $I_1$. 

 Similarly, we estimate $I_2$ by structure condition \eqref{eq:str} and Young's inequality to get 
\begin{equation*}
\begin{aligned}
| I_2|\le\, c\beta \int_\Omega
\eta^{\beta+2}\weight |\X u|| Tu|^{\beta-1}| \X \X u\| \X Tu|\, dx,
\end{aligned}
\end{equation*}
which is estimated exactly as $I_{1,1,2}$ and thus the claim \eqref{claim1} holds for $I_2$. 

For $I_3$, we have by structure condition \eqref{eq:str}, that 
\[
|I_3 |\le c(\beta+1)\int_{\Omega}\eta^{\beta+1}|\X\eta|\weight |\X u|| Tu|^\beta| \X\X u|\, dx,
\]which is estimated exactly as $I_{1,2}$ and thus the claim \eqref{claim1} holds for $I_3$ as well.

Finally, we note that 
\begin{equation*}
\begin{aligned}
I_4 &= -  \sum_{i,j,l}  \int_\Om \eta^{\beta+2}| Tu|^\beta X_lu D_jA_i(\X u) X_j[X_i, X_l] u\dx\\
&\qquad +  \sum_{i,j,l}  \int_\Om \eta^{\beta+2}| Tu|^\beta X_lu D_jA_i(\X u) [[X_i, X_l],X_j]u\dx\\
&= I_{4,1}+I_{4,2}. 
\end{aligned}
\end{equation*}
 From \eqref{eq:comm} and \eqref{eq:str}, we find that 
\begin{equation}\label{i41}
\begin{aligned}
| I_{4,1}|\ \le  c\int_{\Omega}\eta^{\beta+2}\weight |\X u| | Tu|^\beta| \X Tu|\dx 
+ c \int_{\Omega}\eta^{\beta+2}\weight |\X u| | Tu|^\beta | \XX u|\dx
\end{aligned}
\end{equation}
which is estimated exactly as $I_{1,1,1}$. For $I_{4,2}$, we use \eqref{eq:str} followed by \eqref{eq:XXX} and \eqref{eq:ineqT} used successively to obtain
\begin{equation}\label{i42}
\begin{aligned}
| I_{4,2}|\ \le  c\int_{\Omega}\eta^{\beta+2}\weight |\X u| | Tu|^\beta| \big(|\X u| +|\XX u|)\dx,
\end{aligned}
\end{equation}
the first term of the above is estimated from \eqref{eq:ineqT} and the second term is exactly equal to the second term of \eqref{i41}. Thus, from \eqref{i41} and \eqref{i42} we conclude that the claim \eqref{claim1} holds for $I_4$ and the proof is finished. 
\end{proof}
\begin{Rem}
It is not difficult to see that from the above proof, the case $\beta=0$ leads to 
\begin{equation}\label{eq:beta0XX}
\int_\Om \eta^2 \weight |\XX u|^2\dx \leq c\big(\| \eta\| _{L^\infty}^2 + \| \X\eta\| _{L^\infty}^2\big)
 \int_{\supp(\eta)} \weight |\X u|^2\dx,
\end{equation}
since all the terms containing $\beta |Tu|^{\beta-1}$ in the proof vanish in the first steps of the estimates. 
\end{Rem}
The following corollary is proved by using \eqref{eq:ineqT} and H\"older's inequality on Lemma \ref{lem:rev}.
\begin{Cor}\label{cor:rep}
Given $\beta\ge 2$ and non-negative $\eta\in C^\infty_0(\Omega)$, we have $c=c(N,L)>0$ such that
\begin{equation}\label{eq:rep}
\begin{aligned}
\int_\Omega \eta^{\beta+2}\weight| Tu|^\beta| \X\X u|^2\, dx \le c^{\frac{\beta}{2}}(\beta+1)^{2\beta} K_\eta^{\beta/2}
\int_\Omega \eta^2\, \weight |\X u|^{\beta}  |\X\X u|^2\, dx 
\end{aligned}
\end{equation}
where $K_\eta =\| \eta\| _{L^\infty}^2 + \| \X\eta\| _{L^\infty}^2$ and $c=c(N,L)>0$. 
\end{Cor}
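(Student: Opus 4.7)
The plan is to combine Lemma \ref{lem:rev} with a single application of H\"older's inequality that ``self-improves'' the estimate: the right-hand side of \eqref{eq:rev} contains the two problematic factors $|Tu|^{\beta-2}$ and $|\X u|^2$, and H\"older interpolation will reproduce a piece of the left-hand side (to be absorbed) together with exactly the cleaner integrand $\eta^2\weight|\X u|^\beta|\XX u|^2$ that is desired on the right of \eqref{eq:rep}. No further iteration of \eqref{eq:rev} will be needed.

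Concretely, I would apply H\"older's inequality with conjugate exponents $\beta/(\beta-2)$ and $\beta/2$ to the right-hand side of \eqref{eq:rev}, splitting the integrand as
\begin{equation*}
\eta^\beta \weight|\X u|^2|Tu|^{\beta-2}|\XX u|^2 = \bigl(\eta^{(\beta+2)(\beta-2)/\beta}\weight^{(\beta-2)/\beta}|Tu|^{\beta-2}|\XX u|^{2(\beta-2)/\beta}\bigr)\cdot \bigl(\eta^{4/\beta}\weight^{2/\beta}|\X u|^2|\XX u|^{4/\beta}\bigr).
\end{equation*}
Raising the first factor to the power $\beta/(\beta-2)$ recovers exactly $\eta^{\beta+2}\weight|Tu|^\beta|\XX u|^2$, while raising the second to the power $\beta/2$ yields $\eta^2\weight|\X u|^\beta|\XX u|^2$. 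H\"older therefore gives
\begin{equation*}
\int_\Om \eta^\beta\weight|\X u|^2|Tu|^{\beta-2}|\XX u|^2\,dx \leq \Bigl(\int_\Om \eta^{\beta+2}\weight|Tu|^\beta|\XX u|^2\,dx\Bigr)^{\!\frac{\beta-2}{\beta}}\Bigl(\int_\Om \eta^2\weight|\X u|^\beta|\XX u|^2\,dx\Bigr)^{\!\frac{2}{\beta}}.
\end{equation*}
Feeding this back into \eqref{eq:rev} and denoting the two integrals on the right by $A$ and $B$ respectively, one obtains $A \leq c(\beta+1)^4 K_\eta \, A^{(\beta-2)/\beta}B^{2/\beta}$; dividing by $A^{(\beta-2)/\beta}$ and raising to the power $\beta/2$ produces \eqref{eq:rep} with the asserted constant $c^{\beta/2}(\beta+1)^{2\beta}K_\eta^{\beta/2}$. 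The borderline case $\beta=2$ is trivial, since one H\"older exponent degenerates and \eqref{eq:rep} then reduces verbatim to \eqref{eq:rev}.

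The only delicate point, and hence the main thing to be careful about, is the a priori finiteness of $A=\int \eta^{\beta+2}\weight|Tu|^\beta|\XX u|^2\,dx$, required to legitimately divide by $A^{(\beta-2)/\beta}$. This is supplied by the working regularity \eqref{eq:ap reg} together with \eqref{eq:ineqT}: since $Tu\in L^\infty_\loc$, $\XX u\in L^2_\loc$, and under the standing assumption $\delta>0$ the weight $\weight$ is locally bounded on $\supp\eta$ (see Remark \ref{rem:del0}), finiteness of $A$ is ensured; the pointwise bound $|Tu|\le c|\XX u|$ from \eqref{eq:ineqT} provides an alternative domination should one prefer to avoid the $L^\infty_\loc$ hypothesis on $Tu$. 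Beyond this bookkeeping, no further obstacle arises.
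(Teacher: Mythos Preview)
Your proof is correct and takes essentially the same approach as the paper: apply H\"older's inequality with exponents $\beta/(\beta-2)$ and $\beta/2$ to the right-hand side of \eqref{eq:rev}, then absorb the resulting factor of $A^{(\beta-2)/\beta}$ into the left-hand side. The paper phrases the absorption step as ``Young's inequality'' rather than your direct division-and-raise-to-a-power, but these are equivalent (and your version produces exactly the stated constant $c^{\beta/2}(\beta+1)^{2\beta}K_\eta^{\beta/2}$); your explicit remark on the finiteness of $A$ via \eqref{eq:ap reg} is a welcome point of rigor that the paper leaves implicit.
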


\begin{proof}
The right hand side of \eqref{eq:rev} is 
estimated by H\"older's inequality as 
\begin{equation}\label{rep2}
\begin{aligned}
\int_{\Omega}\eta^{\beta}& \weight |\X u|^2| Tu|^{\beta-2}| \X\X u|^2 \dx \\
&\leq \Big( \int_\Omega \eta^{\beta+2} \F(|\X u|)| Tu|^{\beta}|\X\X u|^2\, dx \Big)^\frac{\beta-2}{\beta}
 \Big(\int_\Omega \eta^2\, \weight |\X u|^{\beta}  |\X\X u|^2\, dx\Big)^\frac{2}{\beta}.
\end{aligned}
\end{equation}
Then, using Young's inequality on \eqref{rep2} and combining with \eqref{eq:rev}, 
the proof is finished.
\end{proof}

Now, we can remove the error term in \eqref{eq:cacci Xu} and obtain the following apriori estimate. This shall be 
used to prove local Lipschitz regularity of weak solutions. 
\begin{Lem}\label{lem:est Xu}
For any $\beta\geq 2$ and all non-negative $\eta\in C^\infty_0(\Omega)$, we have that
\begin{equation}\label{eq:est Xu}
\int_{\Omega}\eta^2\, \F(|\X u|) |\X u|^\beta|\X\X u|^2\, dx \le c (\beta+1)^{12} K_\eta
\int_{\supp(\eta)}\weight |\X u|^{\beta+2}\, dx,
\end{equation}
where $K_\eta =\| \eta\|_{L^\infty}^2+\| \X\eta\|_{L^\infty}^2+\|\eta
T\eta\|_{L^\infty}$ and $c=c(N,L)>0$. 
\end{Lem}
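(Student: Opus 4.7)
The plan is to derive the estimate from Lemma~\ref{cacci:Xu} (Caccioppoli for $\X u$) by absorbing the bad $|Tu|^2$-contribution through a carefully balanced Hölder split combined with Corollary~\ref{cor:rep}. Applying \eqref{eq:cacci Xu} to the given cutoff $\eta$ gives
$$\int_\Omega \eta^2\weight|\X u|^\beta|\XX u|^2\dx \le c K_\eta\int_{\supp\eta}\weight|\X u|^{\beta+2}\dx + c(\beta+1)^4\,\mathcal B,$$
where $\mathcal B := \int_\Omega \eta^2\weight|\X u|^\beta|Tu|^2\dx$ is the term to be controlled. The Caccioppoli already contributes $(\beta+1)^4$ in front of $\mathcal B$; the target exponent $12$ is going to come from an additional $(\beta+1)^{8}$ factor produced by absorbing $\mathcal B$.

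The key point is to factor the integrand of $\mathcal B$ asymmetrically so that the power $\eta^{\beta+2}$ lands on the $|Tu|$-piece, which is exactly what Corollary~\ref{cor:rep} requires on its left-hand side. Writing
$$\eta^2\weight|\X u|^\beta|Tu|^2 \;=\; \bigl(\eta^{\beta+2}\weight|Tu|^{\beta+2}\bigr)^{2/(\beta+2)}\bigl(\weight|\X u|^{\beta+2}\bigr)^{\beta/(\beta+2)}$$
and applying H\"older's inequality with conjugate exponents $(\beta+2)/2$ and $(\beta+2)/\beta$, one obtains
$$\mathcal B \;\le\; \Bigl(\int_\Omega\eta^{\beta+2}\weight|Tu|^{\beta+2}\dx\Bigr)^{2/(\beta+2)}\Bigl(\int_{\supp\eta}\weight|\X u|^{\beta+2}\dx\Bigr)^{\beta/(\beta+2)}.$$
The first factor is then bounded using the pointwise inequality $|Tu|^{\beta+2}\le c|Tu|^\beta|\XX u|^2$ from \eqref{eq:ineqT} followed by Corollary~\ref{cor:rep}, which yields
$$\int_\Omega\eta^{\beta+2}\weight|Tu|^{\beta+2}\dx \;\le\; c^{1+\beta/2}(\beta+1)^{2\beta}K_\eta^{\beta/2}\int_\Omega\eta^2\weight|\X u|^\beta|\XX u|^2\dx.$$

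To close the estimate, one applies the parameterised Young's inequality $A^{2/(\beta+2)}B^{\beta/(\beta+2)} \le \frac{2\lambda}{\beta+2}A + \frac{\beta\lambda^{-2/\beta}}{\beta+2}B$ with $\lambda$ chosen of order $(\beta+1)^{-(2\beta+4)}K_\eta^{-\beta/2}$ (times a small universal constant). This makes the $A$-contribution, after multiplication by $c(\beta+1)^4$, at most $\tfrac12\int_\Omega\eta^2\weight|\X u|^\beta|\XX u|^2\dx$, which is absorbed into the left-hand side. The $B$-contribution gives $c(\beta+1)^4\lambda^{-2/\beta}\int_{\supp\eta}\weight|\X u|^{\beta+2}\dx$; a direct calculation yields $\lambda^{-2/\beta}\sim (\beta+1)^{4+8/\beta}K_\eta$, so the total constant is $c(\beta+1)^{8+8/\beta}K_\eta \le c(\beta+1)^{12}K_\eta$ for $\beta\ge 2$ (using $8/\beta\le 4$), matching the statement. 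The main obstacle is the fine tuning of $\lambda$: it must scale exactly like $K_\eta^{-\beta/2}$ so that $\lambda^{-2/\beta}$ contributes precisely one power of $K_\eta$, cancelling the $K_\eta^{\beta/2}$ from Corollary~\ref{cor:rep} and producing the linear $K_\eta$ dependence stated; the hypothesis $\beta\ge 2$ enters at the very end to collapse $8+8/\beta$ into $12$.
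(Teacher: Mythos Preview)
Your argument is correct and follows essentially the same route as the paper's proof: both start from \eqref{eq:cacci Xu}, apply the identical H\"older split of the $|Tu|^2$-term into $\bigl(\int\eta^{\beta+2}\weight|Tu|^{\beta+2}\bigr)^{2/(\beta+2)}\bigl(\int\weight|\X u|^{\beta+2}\bigr)^{\beta/(\beta+2)}$, invoke \eqref{eq:ineqT} plus Corollary~\ref{cor:rep} on the first factor, and absorb via a parametrised Young inequality with the same scaling in $K_\eta$ and $(\beta+1)$. The only cosmetic difference is that the paper applies Young before substituting the bound from Corollary~\ref{cor:rep} (parameter $h$), whereas you substitute first and then apply Young (parameter $\lambda$); the bookkeeping leading to $(\beta+1)^{4(\beta+2)/\beta+4}\le(\beta+1)^{12}$ is the same.
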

\begin{proof}
Let us recall \eqref{eq:cacci Xu} and rewrite it as 
\begin{equation}\label{cx}
\begin{aligned}
\int_{\Omega}\eta^2\, \F(|\X u|) |\X u|^\beta|\X\X u|^2\, dx\le&
\ c(\beta+1)^4\int_\Omega \big(\eta^2+| \X \eta|^2+|\eta T\eta|\big) \F(|\X u|) |\X u|^{\beta+2}\, dx\\
&+c(\beta+1)^4\int_\Omega\eta^2\,  \F(|\X u|) |\X u|^\beta| Tu|^2\, dx.
\end{aligned}
\end{equation}
It is clear that we need to estimate only the last term of the right hand side of \eqref{cx}. We estimate it by H\"older's inequality and Young's inequality as  
\begin{equation}\label{ef1}
\begin{aligned}
c&(\beta+1)^4\int_\Omega\eta^2\weight |\X u|^\beta| Tu|^2\, dx\\
&\quad \le c(\beta+1)^4\Big(\int_\Omega \eta^{\beta+2}\weight 
|Tu|^{\beta+2}\, dx\Big)^{\frac{2}{\beta+2}}\Big(\int_{\supp(\eta)}\weight |\X u|^{\beta+2}\, dx\Big)^{\frac{\beta}{\beta+2}}\\
&\quad \le c\, h^\frac{\beta+2}{2} \int_\Omega \eta^{\beta+2}\weight 
|Tu|^{\beta+2}\, dx + \frac{c}{h^\frac{\beta+2}{\beta}}(\beta+1)^{\frac{4(\beta+2)}{\beta}} \int_{\supp(\eta)}\weight |\X u|^{\beta+2}\, dx
\end{aligned}
\end{equation}
for some $h>0$ to be chosen later. The first term of the right hand side of \eqref{ef1} is estimated using 
\eqref{eq:ineqT} and Corrollary \ref{cor:rep} as 
\begin{equation}\label{ef2}
\begin{aligned}
 c\, h^\frac{\beta+2}{2}\int_\Omega \eta^{\beta+2}\weight 
|Tu|^{\beta+2}\, dx &\leq  c\, h^\frac{\beta+2}{2}\int_\Omega \eta^{\beta+2}\weight 
|Tu|^{\beta}|\XX u|^2\, dx\\
&\leq (ch)^\frac{\beta+2}{2}(\beta+1)^{2\beta} K_\eta^{\beta/2} \int_\Omega \eta^2\, \weight |\X u|^{\beta}  |\X\X u|^2\, dx.
\end{aligned}
\end{equation}
Now, for a large enough $c=c(N,L)>0$ and small enough $\eps>0$, we take
\begin{equation}\label{eq:hch}
 h = \frac{\eps}{c(\beta+1)^{4\beta/(\beta+2)} K_\eta^{\beta/(\beta+2)}},
\end{equation}
so that the above implies 
\begin{equation}\label{ef3}
\begin{aligned}
 c\, h^\frac{\beta+2}{2}\int_\Omega &\eta^{\beta+2}\weight |Tu|^{\beta+2} \dx \leq \eps^\frac{\beta+2}{2} \int_\Omega \eta^2\, \weight |\X u|^{\beta}  |\X\X u|^2\, dx
\end{aligned}
\end{equation}
Then, it is not difficult to observe that \eqref{ef3} used in \eqref{ef1} followed by \eqref{ef1} used in \eqref{cx} with 
the choice of $h$ in \eqref{eq:hch} for a small enough $\eps=\eps(N,L)>0$, leads to 
\[
\int_\Omega \eta^2\, \weight |\Xu|^\beta |\XX u|^2\, dx
\le c(\beta+1)^{\frac{4(\beta+2)}{\beta}+4} K_\eta \int_{\supp(\eta)} \weight |\X u|^{\beta+2}\, dx,
\]
for some $c=c(N,L)>0$. 
This completes the proof. 
\end{proof}
The following corollaries would be essential for proving the $C^{1,\alpha}$-regularity of weak solutions. 
\begin{Cor}\label{lem:est Tu}
For any $\beta\geq 2$ and all non-negative $\eta\in C^\infty_0(\Omega)$, we have that
\begin{equation}\label{eq:est Tu}
\int_\Omega\eta^{\beta+2}\,\weight| Tu|^{\beta+2}\, dx \le c(\beta)K_\eta^{\frac{\beta+2}{2}}
\int_{\supp(\eta)}\weight |\X u|^{\beta+2}\, dx,
\end{equation}
where $K_\eta =\| \eta\|_{L^\infty}^2+\| \X\eta\|_{L^\infty}^2+\|\eta
T\eta\|_{L^\infty}$ and $c(\beta)=c(N,L,\beta)>0$. 
\end{Cor}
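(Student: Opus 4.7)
The plan is to chain together the three main ingredients already proved in this section: inequality \eqref{eq:ineqT} relating $|Tu|$ to $|\XX u|$, the reverse estimate of Corollary \ref{cor:rep}, and the absolute estimate of Lemma \ref{lem:est Xu}. No new testing or integration by parts should be needed.

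First, I would bound $|Tu|^2$ pointwise by $c|\XX u|^2$ via \eqref{eq:ineqT}, getting
\begin{equation*}
\int_\Omega \eta^{\beta+2}\weight|Tu|^{\beta+2}\,dx
\le c\int_\Omega \eta^{\beta+2}\weight|Tu|^{\beta}|\XX u|^2\,dx.
\end{equation*}
This converts the target quantity into precisely the left-hand side of the reverse-type inequality \eqref{eq:rep}.

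Next, I would apply Corollary \ref{cor:rep} to the right-hand side above, which replaces the factor $\eta^{\beta+2}|Tu|^\beta$ by $\eta^2 |\X u|^\beta$ at the cost of a $(\beta+1)^{2\beta} K_\eta^{\beta/2}$ prefactor:
\begin{equation*}
\int_\Omega \eta^{\beta+2}\weight|Tu|^{\beta}|\XX u|^2\,dx
\le c^{\beta/2}(\beta+1)^{2\beta} K_\eta^{\beta/2}
\int_\Omega \eta^2\,\weight|\X u|^\beta|\XX u|^2\,dx.
\end{equation*}

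Finally, I would invoke Lemma \ref{lem:est Xu} on the remaining integral to eliminate the Hessian term entirely, yielding a bound of the form $c(\beta+1)^{12} K_\eta \int_{\supp(\eta)}\weight|\X u|^{\beta+2}\,dx$. Multiplying the $K_\eta^{\beta/2}$ factor from the previous step with this additional $K_\eta$ gives the required $K_\eta^{(\beta+2)/2}$ dependence, while the polynomial-in-$\beta$ prefactors combine into a single constant $c(\beta)=c(N,L,\beta)>0$. There is no serious obstacle here: the only thing to check is that the exponent bookkeeping on $K_\eta$ matches the statement, which it does because $\beta/2 + 1 = (\beta+2)/2$.
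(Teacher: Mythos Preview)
Your proposal is correct and follows essentially the same argument as the paper: use \eqref{eq:ineqT} to pass from $|Tu|^{\beta+2}$ to $|Tu|^\beta|\XX u|^2$, then apply Corollary \ref{cor:rep} followed by Lemma \ref{lem:est Xu}, with the $K_\eta$ exponents combining as you checked.
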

\begin{proof}
It is easy to see that from \eqref{eq:ineqT}, we have 
$$ \int_\Omega\eta^{\beta+2}\,\weight| Tu|^{\beta+2}\, dx \leq c\int_\Omega\eta^{\beta+2}\,\weight| Tu|^{\beta}|\XX u|^2\, dx $$ 
and then \eqref{eq:est Tu} follows easily from Corrollary \ref{cor:rep} and \eqref{eq:est Xu}. This completes the proof.
\end{proof}

\begin{Cor}\label{cor:est XTu}
For any $\beta\geq 2$ and all non-negative $\eta\in C^\infty_0(\Omega)$, we have that
\begin{equation}\label{eq:est XTu}
\int_{\Omega}\eta^{\beta+4}\weight 
| Tu|^\beta|\X Tu|^2\dx \leq c(\beta)K_\eta^{\frac{\beta+4}{2}}
\int_{\supp(\eta)}\weight |\X u|^{\beta+2}\, dx,
\end{equation}
where $K_\eta =\| \eta\|_{L^\infty}^2+\| \X\eta\|_{L^\infty}^2+\|\eta
T\eta\|_{L^\infty}$ and $c(\beta)=c(N,L,\beta)>0$. 
\end{Cor}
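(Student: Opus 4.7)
The plan is to reduce the claim entirely to estimates already established. The starting point is the intermediate inequality \eqref{weak3} derived in the proof of Lemma \ref{lem:rev}, namely
\begin{equation*}
\int_{\Omega}\eta^{\beta+4}\weight |Tu|^\beta |\X Tu|^2\dx
\le c(\beta+1)^2 K_\eta \int_{\Omega}\eta^{\beta+2}\weight |Tu|^{\beta}\big(|\X u|^2 + |\XX u|^2\big)\dx,
\end{equation*}
which is obtained from Lemma \ref{lem:cacci T} after the substitution $\eta\mapsto\eta^{(\beta+4)/2}$ and use of \eqref{eq:ineqT}. It therefore suffices to control the two pieces of the right hand side individually by $c(\beta) K_\eta^{(\beta+2)/2}\int_{\supp(\eta)}\weight|\X u|^{\beta+2}\dx$.

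For the Hessian term I would apply Corollary \ref{cor:rep} directly, which converts $|Tu|^\beta$ into $|\X u|^\beta$ and lowers the cutoff power from $\eta^{\beta+2}$ to $\eta^2$, at the cost of a factor $c(\beta)K_\eta^{\beta/2}$; then Lemma \ref{lem:est Xu} bounds the resulting integral $\int \eta^2 \weight |\X u|^\beta |\XX u|^2\dx$ by $c(\beta)K_\eta\int_{\supp(\eta)}\weight|\X u|^{\beta+2}\dx$. Multiplying the two factors gives the claimed $K_\eta^{(\beta+2)/2}$.

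For the remaining $|\X u|^2$ term I would split the integrand by H\"older's inequality with conjugate exponents $(\beta+2)/\beta$ and $(\beta+2)/2$, writing
\[
\int_{\Omega}\eta^{\beta+2}\weight |Tu|^{\beta}|\X u|^2\dx
\le \Big(\!\int_{\Omega}\eta^{\beta+2}\weight |Tu|^{\beta+2}\dx\Big)^{\!\frac{\beta}{\beta+2}}\Big(\!\int_{\supp(\eta)}\weight|\X u|^{\beta+2}\dx\Big)^{\!\frac{2}{\beta+2}},
\]
and then apply Corollary \ref{lem:est Tu} to the first factor. After raising to the power $\beta/(\beta+2)$ this yields a constant of the form $c(\beta)K_\eta^{\beta/2}$ times $\big(\int_{\supp(\eta)}\weight|\X u|^{\beta+2}\dx\big)^{\beta/(\beta+2)}$, which combined with the second factor produces exactly $c(\beta)K_\eta^{\beta/2}\int_{\supp(\eta)}\weight|\X u|^{\beta+2}\dx$, that is, a bound of the same order as for the Hessian piece.

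Adding the two bounds and absorbing the leading factor $c(\beta+1)^2 K_\eta$ from \eqref{weak3} delivers the total weight $K_\eta \cdot K_\eta^{(\beta+2)/2}=K_\eta^{(\beta+4)/2}$, exactly as claimed. There is no genuinely hard step here; the only bookkeeping subtlety is ensuring the cutoff power $\eta^{\beta+4}$ on the left is matched by the $\eta^{\beta+2}$ on the right so that Corollary \ref{cor:rep} can be invoked cleanly, which is precisely what the adjusted version \eqref{weak3} of Lemma \ref{lem:cacci T} gives.
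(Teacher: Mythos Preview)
Your proposal is correct and follows essentially the same approach as the paper: start from the variant \eqref{weak3} of Lemma~\ref{lem:cacci T}, handle the $|\XX u|^2$ piece via Corollary~\ref{cor:rep} together with Lemma~\ref{lem:est Xu}, and handle the $|\X u|^2$ piece by H\"older's inequality with exponents $(\beta+2)/\beta$ and $(\beta+2)/2$ followed by Corollary~\ref{lem:est Tu}. The only cosmetic difference is that the paper keeps the factor $\eta^{\beta+2}$ in the second H\"older integral rather than passing immediately to $\supp(\eta)$, but this is harmless once $\|\eta\|_{L^\infty}$ is absorbed into $K_\eta$.
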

\begin{proof}
Recalling the variant \eqref{weak3} of \eqref{eq:cacci T} of Lemma \ref{lem:cacci T}, we have 
\begin{equation*}
\begin{aligned}
\int_{\Omega}\eta^{\beta+4}\weight 
| Tu|^\beta|\X Tu|^2\dx
\le c(\beta+1)^2 K_\eta \int_{\Omega} \eta^{\beta+2}\weight|
Tu|^{\beta}\big( |\X u|^2 + |\XX u|^2\big)\dx. 
\end{aligned}
\end{equation*}
The second term of the right hand side of the above is estimated from Corrollary \ref{cor:rep} and \eqref{eq:est Xu} and the first term of the right hand side of the above is estimated by H\"older's inequality as 
$$ \int_{\Omega} \eta^{\beta+2}\weight|
Tu|^{\beta} |\X u|^2\dx \leq \Big( \int_{\Omega} \eta^{\beta+2}\weight
|Tu|^{\beta+2}\dx\Big)^\frac{\beta}{\beta+2} 
\Big( \int_{\Omega} \eta^{\beta+2}\weight
|\X u|^{\beta+2}\dx\Big)^\frac{2}{\beta+2} $$ 
and it is further estimated from \eqref{eq:est Tu}. Combining all the estimates, the proof is finished.
\end{proof}
Finally, we need the following Corrollary, which also relies on \eqref{eq:rep}, \eqref{eq:est Xu} and \eqref{eq:est Tu}. 
\begin{Cor}\label{cor:est0 XTu}
For any $\beta\geq 2$ and all non-negative $\eta\in C^\infty_0(\Omega)$, we have that
\begin{equation}\label{eq:est0 XTu}
\int_{\Omega}\eta^{\beta+4}\weight 
| \X u|^\beta|\X Tu|^2\dx \leq c(\beta) K_\eta^\frac{\beta+4}{2}
\int_{\supp(\eta)}\weight |\X u|^{\beta+2}\, dx,
\end{equation}
where $K_\eta =\| \eta\|_{L^\infty}^2+\| \X\eta\|_{L^\infty}^2+\|\eta
T\eta\|_{L^\infty}$ and $c(\beta)=c(N,L,\beta)>0$. 
\end{Cor}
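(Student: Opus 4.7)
The plan is to mirror the proof of Corollary \ref{cor:est XTu}: first establish a Caccioppoli-type inequality analogous to \eqref{weak3} but with the weight $|\X u|^\beta$ in place of $|Tu|^\beta$, and then close the estimate by invoking Corollary \ref{cor:rep} together with \eqref{eq:est Xu} and \eqref{eq:est Tu}.

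\textit{Step 1 (Caccioppoli variant).} Test the identity \eqref{eq:Teq} with $\varphi=\varphi_k:=\eta^{\beta+4}|\X u|^\beta T_ku$ and sum over $k\in\{1,\ldots,n\}$. Using the chain rule $X_i(|\X u|^\beta)=\beta|\X u|^{\beta-2}\sum_l X_lu\,X_iX_lu$, the diagonal contribution produces the main term $\int\eta^{\beta+4}\F(|\X u|)|\X u|^\beta|\X Tu|^2$ on the left via \eqref{eq:str}. The remaining pieces of $X_i\varphi_k$, together with the two commutator terms on the right of \eqref{eq:Teq} (controlled by \eqref{eq:comm} and \eqref{eq:XTX}), are estimated by Young's inequality, with small $\eps\int \eta^{\beta+4}\F|\X u|^\beta|\X Tu|^2$ contributions reabsorbed into the left. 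Using $|Tu|\le c|\XX u|$ from \eqref{eq:ineqT} to trade $|Tu|^2$ against $|\XX u|^2$ where convenient, the outcome is a Caccioppoli-type inequality
\begin{equation}\label{eq:aux-xtu}
\begin{aligned}
\int_\Om \eta^{\beta+4}\F(|\X u|)|\X u|^\beta|\X Tu|^2\dx \le\ &c(\beta+1)^2 K_\eta\int_\Om \eta^{\beta+2}\F(|\X u|)|\X u|^\beta\big(|\X u|^2+|\XX u|^2\big)\dx\\
&+c\beta^2\int_\Om \eta^{\beta+4}\F(|\X u|)|\X u|^{\beta-2}|\XX u|^2|Tu|^2\dx,
\end{aligned}
\end{equation}
with $c=c(N,L)>0$ and $K_\eta=\|\eta\|_\infty^2+\|\X\eta\|_\infty^2+\|\eta T\eta\|_\infty$.

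\textit{Step 2 (closing the estimate).} The first two terms on the right of \eqref{eq:aux-xtu} are routine: the $|\X u|^{\beta+2}$-piece is trivially majorised by $K_\eta^{(\beta+2)/2}\int_{\supp\eta}\F|\X u|^{\beta+2}$, while for the $|\XX u|^2$-piece I bound $\eta^{\beta+2}\le K_\eta^{\beta/2}\eta^2$ and apply \eqref{eq:est Xu}. For the extra term in \eqref{eq:aux-xtu}, the weighted AM--GM inequality $|\X u|^{\beta-2}|Tu|^2\le \tfrac{\beta-2}{\beta}|\X u|^\beta+\tfrac{2}{\beta}|Tu|^\beta$ (valid for $\beta\ge 2$) gives
\[
\int \eta^{\beta+4}\F|\X u|^{\beta-2}|\XX u|^2|Tu|^2\dx\le \frac{\beta-2}{\beta}\int\eta^{\beta+4}\F|\X u|^\beta|\XX u|^2\dx+\frac{2}{\beta}\int\eta^{\beta+4}\F|Tu|^\beta|\XX u|^2\dx.
\]
The first summand is bounded directly by \eqref{eq:est Xu}, after extracting $\eta^{\beta+4}\le K_\eta^{(\beta+2)/2}\eta^2$. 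The second summand is bounded by first invoking \eqref{eq:rep} (which trades the $\eta^{\beta+2}|Tu|^\beta$-weight for an $\eta^2|\X u|^\beta$-weight) and then \eqref{eq:est Xu}. Both produce the same majorant $c(\beta)K_\eta^{(\beta+4)/2}\int \F|\X u|^{\beta+2}$. Finally, any residual $\eta^{\beta+2}|\X\eta|^2\F|\X u|^\beta|Tu|^2$ contribution arising in \eqref{eq:aux-xtu} from the $|X_i\eta|\cdot|Tu|\cdot|\X Tu|$ estimates in Step 1 is handled by H\"older with exponents $(\beta+2)/\beta$ and $(\beta+2)/2$ together with \eqref{eq:est Tu}, giving the same majorant.

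\textit{Main obstacle.} The delicate part is Step 1. Unlike in Lemma \ref{lem:cacci T}, differentiating the weight $|\X u|^\beta$ produces a factor $|\XX u|$ rather than $|\X Tu|$, which generates the extra term $c\beta^2\int\eta^{\beta+4}\F|\X u|^{\beta-2}|\XX u|^2|Tu|^2$ absent from \eqref{weak3}. Its mixed homogeneity in $|\X u|$ and $|Tu|$ forces the Young splitting of Step 2, which routes one half through \eqref{eq:est Xu} directly and the other half through \eqref{eq:rep} composed with \eqref{eq:est Xu}; this is exactly where \eqref{eq:rep} enters the argument.
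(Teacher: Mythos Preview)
Your proof is correct and follows essentially the same route as the paper: you test \eqref{eq:Teq} with $\varphi_k=\eta^{\beta+4}|\X u|^\beta T_ku$, identify the same problematic mixed term $\int\eta^{\beta+4}\F|\X u|^{\beta-2}|Tu|^2|\XX u|^2$ coming from the $X_i(|\X u|^\beta)$-derivative, and close it via \eqref{eq:rep} and \eqref{eq:est Xu}. The only cosmetic difference is that you split the mixed term pointwise by Young's inequality whereas the paper uses H\"older's inequality with the same exponents $\beta/(\beta-2)$ and $\beta/2$; the remaining bookkeeping (handling the $\eta^{\beta+2}|\X\eta|^2\F|\X u|^\beta|Tu|^2$ term by H\"older and \eqref{eq:est Tu}, extracting powers of $\eta$ as $K_\eta$) matches the paper exactly.
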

\begin{proof}
By testing the equation \eqref{eq:Teq} with $\varphi_k=\eta^{\beta+4}|\X u|^\beta 
T_ku $ and summing over $k\in\{1,\ldots,n\}$, we get 
\begin{equation}\label{eq:xt1}
  \begin{aligned}
 \sum_{i,j,k}\int_\Omega &\eta^{\beta+4} |\X u|^\beta D_jA_i(\X u)X_jT_k uX_iT_ku\, dx \\
   &\ = -(\beta+4)\sum_{i,j,k}\int_\Omega \eta^{\beta+3} \, |\X u|^\beta 
T_k u D_jA_i(\X u)X_jT_k uX_i\eta\, dx\\
 &\qquad -\beta \sum_{i,j,k}\int_\Omega  \eta^{\beta+4} |\X u|^{\beta-1} 
T_k u D_jA_i(\X u)X_jT_k uX_i(|\X u|)\dx \\
&\qquad + \sum_{i,j,k}  \int_\Om D_jA_i(\X u)[X_j,T_k]u X_i\varphi_k\dx \\
&\qquad + \sum_{i,j,k}    \int_\Om  \varphi_k D_jA_i(\X u) [X_i, T_k]X_j u\dx\\
&= L_1+L_2+L_3+L_4.
  \end{aligned}
 \end{equation}
From the structure condition \eqref{eq:str}, 
\begin{equation}\label{eq:r0}
  \begin{aligned}
 \text{LHS of \eqref{eq:xt1}} &\geq \int_\Omega \eta^{\beta+4}\F(|\X u|) |\X u|^\beta |\X Tu|^2\dx 
+ \beta \int_\Omega \eta^{\beta+4}\F(|\X u|) |\X u|^\beta |\X(|Tu|)|^2\dx\\
&\geq \int_\Omega \eta^{\beta+4}\F(|\X u|) |\X u|^\beta |\X Tu|^2\dx.
  \end{aligned}
 \end{equation}
We estimate the right hand side of \eqref{eq:xt1}, one by one. We claim that for $i\in \{1,2,3,4\}$, 
\begin{equation}\label{eq:lclaim}
 |L_i|  \leq \eps \int_\Omega \eta^{\beta+4}\F(|\X u|) |\X u|^\beta |\X Tu|^2\dx + \frac{c(\beta)}{\eps} K_\eta^\frac{\beta+4}{2}
\int_{\supp(\eta)}\weight |\X u|^{\beta+2}\, dx,
\end{equation}
which, with a small enough $\eps=\eps(N,L)>0$, is enough to conclude the proof.

First, using \eqref{eq:str} and Young's inequality, we have
\begin{equation}\label{eq:l1}
  \begin{aligned}
 |L_1| &\leq c(\beta+1)\int_\Omega |\eta|^{\beta+3} \F(|\X u|) |\X u|^{\beta} |Tu| |\X Tu||\X \eta|\dx \\
& \leq \eps \int_\Omega \eta^{\beta+4}\F(|\X u|) |\X u|^\beta |\X Tu|^2\dx + \frac{c}{\eps}(\beta+1)^2\int_\Omega
\eta^{\beta+2}|\X\eta|^2  \F(|\X u|)|\X u|^\beta |Tu|^2\dx,
  \end{aligned}
 \end{equation}
and the claim \eqref{eq:lclaim} follows from H\"older's inequality and \eqref{eq:est Tu}. Similarly, we have 
\begin{equation}\label{eq:l2}
  \begin{aligned}
 |L_2| &\leq c\beta \int_\Omega \eta^{\beta+4} \F(|\X u|) |\X u|^{\beta-1} |Tu| |\X Tu||\XX u|\dx \\
& \leq \eps \int_\Omega \eta^{\beta+4}\F(|\X u|) |\X u|^\beta |\X Tu|^2\dx + \frac{c}{\eps}\beta^2\int_\Omega
\eta^{\beta+4} \F(|\X u|)|\X u|^{\beta-2} |Tu|^2 |\XX u|^2\dx,
  \end{aligned}
 \end{equation}
where the last term in the right hand side of \eqref{eq:l2} is estimated by H\"older's inequality as 
\begin{equation*}
\begin{aligned}
\int_{\Omega}\eta^{4}& \weight |T u|^2| \X u|^{\beta-2}| \X\X u|^2 \dx \\
&\leq \Big( \int_\Omega \eta^{2} \F(|\X u|)| \X u|^{\beta}|\X\X u|^2\, dx \Big)^\frac{\beta-2}{\beta}
 \Big(\int_\Omega \eta^{\beta+2}\, \weight |T u|^{\beta}  |\X\X u|^2\, dx\Big)^\frac{2}{\beta}.
\end{aligned}
\end{equation*}
and the claim \eqref{eq:lclaim} follows from \eqref{eq:rep} and \eqref{eq:est Xu}.

To continue the estimates, note that 
\begin{equation}\label{l3}
  \begin{aligned}
L_3&= \sum_{i,j,k}\int_\Omega \eta^{\beta+4} |\X u|^\beta D_jA_i(\X u)[X_j,T_k]u X_iT_ku\, dx \\
   &\ +(\beta+4)\sum_{i,j,k}\int_\Omega \eta^{\beta+3}\, |\X u|^\beta 
T_k u D_jA_i(\X u)[X_j,T_k]u X_i\eta\, dx\\
 &\ -\beta \sum_{i,j,k}\int_\Omega  \eta^{\beta+4} |\X u|^{\beta-1} 
T_k u D_jA_i(\X u)[X_j,T_k]u X_i(|\X u|)\dx.
  \end{aligned}
 \end{equation}
Using \eqref{eq:str} and \eqref{eq:comm}, we obtain
\begin{equation*}
  \begin{aligned}
|L_3|&\leq c\int_\Omega \eta^{\beta+4}\weight |\X u|^\beta \big( |\X u|+|T u|\big) |\X Tu| \, dx \\
   &\ +c (\beta+1)\int_\Omega |\eta|^{\beta+3} \weight |\X u|^\beta 
|T u| \big( |\X u|+|T u|\big)|\X\eta|\, dx\\
 &\ +c\beta \int_\Omega  \eta^{\beta+4}\weight |\X u|^{\beta-1} 
|T u| \big( |\X u|+|T u|\big) |\XX u| \dx,
  \end{aligned}
 \end{equation*}
and then, by Young's inequality on all the above terms, we have 
\begin{equation}\label{eq:estl3}
  \begin{aligned}
|L_3| &\leq \eps \int_\Omega \eta^{\beta+4}\F(|\X u|) |\X u|^\beta |\X Tu|^2\dx \\
&\ + \frac{c}{\eps}(\beta+1)^2\int_\Omega \eta^{\beta+2}(\eta^2+|\X\eta|^2)\F(|\X u|) |\X u|^\beta \big( |\X u|^2+|T u|^2\big)\dx\\
&\ + c\beta^2\int_\Omega
\eta^{\beta+4} \F(|\X u|)|\X u|^{\beta-2} |Tu|^2 |\XX u|^2\dx,
 \end{aligned}
 \end{equation}
which is estimated similarly as $L_1$ and $L_2$ in \eqref{eq:l1} and \eqref{eq:l2} above. 

Finally, we note that 
$$ L_4 = \sum_{i,j,k} \int_\Om \eta^{\beta+4} |\X u|^\beta T_ku D_jA_i(\X u) [X_i, T_k]X_j u\dx $$ 
which, by \eqref{eq:str} and  
\eqref{eq:XTX}, leads to 
\begin{equation}\label{eq:l4}
  |L_4| \leq c\int_\Omega\eta^{\beta+4} \weight |\X u|^\beta  |Tu| \big(|\XX u|+|\X Tu| + |\X u| + |Tu|\big)\dx,
\end{equation}
which is a linear combination of the right hand sides of the estimates $L_1, L_2$ and $L_3$ of the above, and hence is estimated accordingly. Thus \eqref{eq:lclaim} holds for all $L_i$'s and the proof is finished. 
\end{proof}

\begin{Rem}
Similarly as in the case of \eqref{eq:beta0XX}, it is not hard to see that from the above proof, the case $\beta=0$ leads to 
\begin{equation}\label{eq:beta0XT}
\int_\Om \eta^2 \weight |\X T u|^2\dx \leq cK_\eta 
 \int_{\supp(\eta)} \weight \big( |\X u|^2+|T u|^2\big)\dx,
\end{equation}
with $K_\eta = \| \eta\|_{L^\infty}^2 + \| \X\eta\|_{L^\infty}^2+ \|\eta T\eta\|_{L^\infty}$, 
since all the terms containing $\beta |\X u|^{\beta-1}$ vanish. 
\end{Rem}

\section{Local Lipschitz Continuity}\label{sec:lip}
In this section, we shall prove Theorem \ref{thm:mainthm1} using the Caccioppoli type estimate \eqref{eq:est Xu} obtained in Section \ref{sec:apest} and an approximation argument. We shall fix $x_0\in \Om$ and denote all CC-balls as $B_r=B_r(x_0)$ with dimension $Q=Q(x_0)$. 
Notice that whenever $\delta>0$, the function 
\begin{equation}\label{eq:fp}
\F(t) = (\delta+t^2)^\frac{p-2}{2}
\end{equation}
satisfies all conditions in the begining of Section \ref{sec:apest} for any $1<p<\infty$ and \eqref{eq:str} is identical to the structure condition \eqref{eq:pstr}. The substitution \eqref{eq:fp} on the estimate \eqref{eq:est Xu} followed by the use of Sobolev inequality and Moser's iteration, gives us the estimate of Theorem \ref{thm:mainthm1} subject to the apriori assumption \eqref{eq:ap reg}.
The proof is standard, we provide a brief outline below.
\begin{Thm}\label{thm:aplip}
 If $ u \in \subsob^{1,2}_\loc(\Om)$ is a weak solution of equation \eqref{eq:peq} and \eqref{eq:ap reg} holds, then 
 \begin{equation}\label{eq:aplocbound}
  \sup_{B_{\sigma r}}\ |\X u|  \leq \frac{c}{(1-\sigma)^{Q/p}}\bigg(\intav_{B_r}\big(\delta+|\X u|^2\big)^\frac{p}{2}\dx \bigg)^\frac{1}{p}
 \end{equation}
holds for any $0<\sigma<1$, where $1<p<\infty,\ \delta>0 $ and $c = c(N,p,L) > 0 $.
\end{Thm}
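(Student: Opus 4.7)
The proof plan is a standard Moser iteration based on the Caccioppoli estimate \eqref{eq:est Xu} combined with the sub-elliptic Sobolev embedding \eqref{eq:sob emb}. Throughout, I work under the a priori regularity \eqref{eq:ap reg} so that all manipulations are justified. Set $G = (\delta + |\X u|^2)^{1/2}$, so that the choice $\F(t) = (\delta+t^2)^{(p-2)/2}$ gives $\F(|\X u|) = G^{p-2}$, and note $|\X u| \le G$ and $G \ge \sqrt{\delta} > 0$.

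First, I substitute $\F = G^{p-2}$ into \eqref{eq:est Xu}. For $\beta \ge 2$, the chain rule gives $|\X G^{(\beta+p)/2}|^2 \le c(\beta+p)^2 G^{\beta+p-2}|\XX u|^2 \le c(\beta+p)^2 G^{p-2}|\X u|^\beta |\XX u|^2$. Hence, setting $v_\gamma = G^{\gamma/2}$ with $\gamma = \beta + p \ge p+2$, estimate \eqref{eq:est Xu} yields
\begin{equation*}
\int_\Om \eta^2 |\X v_\gamma|^2 \, dx \;\le\; c\,\gamma^{14} K_\eta \int_{\supp(\eta)} v_\gamma^2 \, dx,
\end{equation*}
where $c=c(N,p,L)$ and $K_\eta=\|\eta\|_{L^\infty}^2+\|\X\eta\|_{L^\infty}^2+\|\eta T\eta\|_{L^\infty}$. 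For concentric balls $B_s \subset B_r \subset\subset \Om$, I pick a standard cutoff $\eta \in C^\infty_0(B_r)$ with $\eta \equiv 1$ on $B_s$, $|\X\eta| \le c/(r-s)$ and $|T\eta|\le c/(r-s)^2$ (the second bound following from $|T\eta|\lesssim |\XX \eta|$ via \eqref{eq:ineqT} applied to $\eta$, with Nagel--Stein--Wainger scaling on CC-balls), so $K_\eta \le c/(r-s)^2$.

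Next, I apply the Sobolev embedding \eqref{eq:sob emb} with $q=2$ (assuming $Q>2$; the case $Q\le 2$ is handled by taking any $q\in(1,Q)$) to $\eta v_\gamma \in \subsob^{1,2}_0(B_r)$, combined with the Caccioppoli bound above, to obtain the iteration inequality
\begin{equation*}
\left(\intav_{B_s} G^{\gamma\chi}\,dx\right)^{1/\chi} \;\le\; \frac{c\,\gamma^{14}\, r^2}{(r-s)^2}\, \intav_{B_r} G^{\gamma}\,dx, \qquad \chi=\tfrac{Q}{Q-2}.
\end{equation*}
Iterate with $\gamma_k = (p+2)\chi^k$ and radii $r_k = \sigma r + (1-\sigma)r\, 2^{-k}$; since $\sum_k \chi^{-k}$ and $\sum_k k \chi^{-k}$ converge, the product telescopes to
\begin{equation*}
\sup_{B_{\sigma r}} G \;\le\; \frac{c}{(1-\sigma)^{Q/(p+2)}}\left(\intav_{B_r} G^{p+2}\,dx\right)^{1/(p+2)}.
\end{equation*}

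Finally, I reduce the exponent from $p+2$ to $p$ by the classical interpolation trick. Running the iteration on a ball $B_{\sigma' r}$ with $\sigma' = (1+\sigma)/2$, write $G^{p+2} = G^2 \cdot G^p$ and estimate $\int G^{p+2} \le (\sup_{B_{\sigma' r}} G)^2 \int G^p$; then absorb $(\sup G)^{2/(p+2)}$ into the left-hand side after applying Young's inequality with exponents $(p+2)/2$ and $(p+2)/p$. A standard covering/iteration lemma (e.g.\ Giaquinta's lemma on $f(\sigma r) \le \tau f(\sigma' r) + A(1-\sigma)^{-\alpha}$) then gives \eqref{eq:aplocbound}. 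The main technical obstacle is the handling of the sub-elliptic cutoff: one must verify the bound $|T\eta|\le c/(r-s)^2$ on CC-balls, so that $K_\eta$ scales like $(r-s)^{-2}$ and the Moser iteration produces the correct homogeneity $Q/p$; everything else is routine bookkeeping.
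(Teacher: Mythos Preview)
Your overall strategy — Moser iteration from the Caccioppoli estimate \eqref{eq:est Xu}, Sobolev embedding \eqref{eq:sob emb}, then an interpolation step to descend from exponent $p+2$ to $p$ — is precisely the paper's proof. The handling of the cutoff (in particular $|T\eta|\lesssim |\XX\eta|\lesssim (r-s)^{-2}$ via \eqref{eq:ineqT}), the iteration bookkeeping, and the final interpolation are all fine.

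There is, however, a wrong-direction inequality in your chain-rule step. You assert
\[
G^{\beta+p-2}|\XX u|^2 \;\le\; c\,G^{p-2}|\X u|^\beta\,|\XX u|^2,
\]
which would require $G^\beta \le c|\X u|^\beta$; but $G=(\delta+|\X u|^2)^{1/2}\ge |\X u|$, so the inequality points the other way and fails badly where $|\X u|\ll\sqrt\delta$. Consequently \eqref{eq:est Xu} as stated, with $|\X u|^\beta$ on the left, does not directly control $\int\eta^2|\X v_\gamma|^2$. The paper's own outline is terse at this same passage (from \eqref{eq:pestXu} to \eqref{eq:moser}). The clean fix is to rerun the proof of Lemma~\ref{lem:est Xu} with the test function $\varphi_l=\eta^2 G^\beta X_lu$ in place of $\eta^2|\X u|^\beta X_lu$; every step goes through with $|\X u|^\beta$ replaced by $G^\beta$ (the extra cross-term coming from $X_iG$ is nonnegative by the structure condition), yielding
\[
\int_\Om \eta^2 G^{\beta+p-2}|\XX u|^2\,dx \;\le\; c(\beta+1)^{12}K_\eta\int_{\supp(\eta)} G^{\beta+p}\,dx,
\]
after which your first chain-rule bound $|\X G^{(\beta+p)/2}|^2\le c(\beta+p)^2 G^{\beta+p-2}|\XX u|^2$ is exactly what is needed and the iteration proceeds as you describe.
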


\begin{proof}
We rewrite the Caccioppoli type estimate \eqref{eq:est Xu} with the substitution \eqref{eq:fp} to have 
\begin{equation}\label{eq:pestXu}
\int_{\Omega}\eta^2 (\delta+|\X u|^2)^\frac{p-2}{2}|\X u|^\beta|\X\X u|^2\, dx \leq 
c (\beta+1)^{12} K_\eta
\int_{\supp(\eta)} (\delta+|\X u|^2)^\frac{p-2}{2} |\X u|^{\beta+2}\, dx
\end{equation}
for any $\beta\geq 2$, where $K_\eta =\| \eta\|_{L^\infty}^2+\| \X\eta\|_{L^\infty}^2+\|\eta
T\eta\|_{L^\infty}$. Letting $w = (\delta+|\X u|^2)^{p/2}$ and using a standard choice of test function $ \eta \in C^\infty_0(B_r) $ such that 
$ 0\leq \eta \leq 1$ and $\eta \equiv 1$ in $B_{r'}$ for $ 0<r'< r$, $|\X \eta|\leq 4/(r-r') $ and $|\XX \eta| \leq 16N/(r-r')^2,$
\eqref{eq:pestXu} implies 
\begin{equation}\label{eq:moser}
\int_{B_{r'}} w^\gamma  |\X w |^2\,dx \leq  
 \frac{c(\gamma+1)^{12}}{(r-r')^2}\int_{B_r} w^{\gamma+2}\,dx
\end{equation} 
for large enough $\gamma$. Using Sobolev's inequality \eqref{eq:sob emb} for $ q =2$
on \eqref{eq:moser}, we get that
$$ \left(\int_{B_{r'}}w^{(\gamma+2)\kappa}\,dx \right)^\frac{1}{\kappa}\leq  
\ \frac{c(\gamma+1)^{14}}{(r-r')^2}\int_{B_r} w^{\gamma+2}\,dx, $$ 
where $\kappa = Q/(Q-2)$. Standard Moser's iteration is carried out by taking an appropriate sequence $\gamma_i $ and 
$ r_i = \sigma r + (1-\sigma)r/2^i $ on the above, then the standard interpolation argument \cite[p. 299--300]{Dib-Tru} leads to 
\eqref{eq:aplocbound} and thereby completes the proof. 
\end{proof}

\subsection{Riemannian Approximation}
The sub-Riemannian metric generated by the vector fields $\{X_1,\ldots,X_m\}$ are approximated in the Gromov-Hausdorff sense via Riemannian approximation generated by $\{X_1^\eps,\ldots,X_{m+n}^\eps\}$, which is a relabelling 
of $\{X_1,\ldots,X_m, \eps T_1,\ldots, \eps T_n\}$, as $\eps\to 0^+$ and generates the norm $|\cdot|_\eps$ on $\R^{m+n}$. The second order vector fields remain $\{T_1^\eps,\ldots, T_n^\eps\}= \{T_1,\ldots, T_n\}$. In other words, the approximate vector fields in consideration are 
\begin{equation}\label{eq:appxt}
\{X_1^\eps,\ldots,X_{m+n}^\eps, T_1^\eps,\ldots, T_n^\eps\}=\{X_1,\ldots,X_m, \eps T_1,\ldots, \eps T_n, T_1,\ldots, T_n\}
\end{equation}
which is exactly the step $2$ case of the approximations provided in \cite{Cap-Citti-Rea, Cap-Citti}. The idea is to be able to carry out everything above with $X_j^\eps$ and $T_k^\eps$ in place of $X_j$ and $T_k$. The reason for the relabelling \eqref{eq:appxt} is that the equation with respect to $X_j^\eps$'s become automatically elliptic since 
$\{X_1,\ldots,X_m,  T_1,\ldots,  T_n\}$ spans the tangent space and hence, we can assume as much apriori regularity as needed. 
Then the conclusion shall follow with the limit $\eps\to 0^+$ provided that all constants do not blow up. We briefly illustrate in the following that this indeed is the case when the set up of the begining of Section \ref{sec:Notations and Preliminaries} is re-written in terms of the approximation \eqref{eq:appxt}. 

We denote the approximation gradients as  
$$\X^\eps u= (X_1^\eps u,\ldots,X_{m+n}^\eps u) = (X_1u,\ldots,X_mu, \eps T_1u,\ldots, \eps T_nu),
\quad\text{and}\quad T^\eps u= Tu$$
and $\dvh^\eps$ is similarly defined. 
Hence, $\X^\eps u \to (\X u, 0)$ and 
$|\X^\eps u|_\eps^2 = |\X u|^2 +\eps^2 |Tu|^2 \to |\X u|^2$ 
as $\eps\to 0^+$. Given $A: \R^m\to \R^m$ that satisfies \eqref{eq:pstr}, one can define 
$A_\eps:\R^{m+n}\to \R^{m+n}$ as 
\begin{equation}\label{Aeps}
A_\eps (\bar z) = \big(1-\eta_\eps(|\bar z|_\eps)\big) \tilde A(\bar z) + \eta_\eps (|\bar z|_\eps) 
(\delta+|\bar z|_\eps^2)^\frac{p-2}{2} \bar z
\end{equation}
similarly as \eqref{Adelta} for any $0<\eps<1$, where $\tilde A(\bar z)= (A(z),0)$ for any $\bar z=(z,z')\in \R^{m+n}$ 
and $\eta_\eps\in C^{0,1}([0,\infty))$ can be chosen (see \cite{Muk0}) such that $A_\eps \to A$ uniformly on compact subsets as $\eps \to 0^+$ and the structure condition 
\begin{equation}\label{eq:pstreps}
 \begin{aligned}
\frac{1}{L'}(\delta+|\bar z|_\eps^2)^\frac{p-2}{2}|\xi|^2 \leq \,&\inp{DA_\eps (\bar z)\,\xi}{\xi}\leq L'(\delta+|\bar z|_\eps^2)^\frac{p-2}{2}|\xi|^2;\\
 &|A_\eps(\bar z)|\leq L'\,|\bar z|_\eps (\delta+|\bar z|_\eps^2)^\frac{p-2}{2},
\end{aligned} 
\end{equation}
holds for any $\bar z,\xi\in \R^{m+n}$, for some $L'=L'(p,L)>1$. Since \eqref{eq:pstreps} ensures that the equation 
$\dvh^\eps ( A_\eps(\X^\eps u))=0$ is uniformly elliptic on $\Om$, hence it has all necessary regularities including 
\eqref{eq:ap reg}. 
As an example we see that 
\begin{equation}\label{eq:Hplapeps}
\dvh^\eps \big((\delta+|\X^\eps u|_\eps^2)^\frac{p-2}{2}\X^\eps u\big) = 0 \quad\text{in}\ \Om, 
\end{equation}
is a natural regularization of the sub-elliptic $p$-Laplacian \eqref{eq:Hplap}. 

From the step $2$ hypothesis \eqref{eq:step2}, note that $\{X_1,\ldots,X_m,  T_1,\ldots,  T_n\}$ spans the tangent space automatically implies $\{X_1,\ldots,X_m,  (1+\eps)T_1,\ldots,  (1+\eps)T_n\}$ spans the tangent space
for any $0\leq \eps \leq 1$ and therefore, any vector field can be written as linear combinations  of $\{X^\eps_1,\ldots,X^\eps_m,  T^\eps_1,\ldots,  T^\eps_n\}$ with bounded coefficients; in particular the commutators 
$[X^\eps_i,X^\eps_j]$ and $[X^\eps_i,T^\eps_j]$ satisfy commutation relations similar to \eqref{eq:comm} 
with uniformly bounded coefficients as in \eqref{eq:normbounds} independent of $\eps$. Hence, all constants in the following 
inequalities are independent of $\eps$ as well. 
But establishing sub-elliptic metrics similarly as in Section \ref{sec:Notations and Preliminaries} is non-trivial. Indeed, 
we have to similarly define 
$$\{Y^\eps_1,\ldots, Y^\eps _{m+2n}\}=\{X^\eps_1,\ldots, X^\eps_{m+n},T^\eps_1,\ldots,T^\eps_n\}$$ and define an approximate degree $d^\eps_j=\deg^\eps (Y_j^\eps)\in \{1,2\}$ so that $\deg^\eps (X^\eps_j)=1$ and $\deg^\eps(T^\eps_j)=2$ (as $\eps \to 0^+$ it is easy to see that $\deg^\eps \to \deg$ with $\deg$ as defined earlier in Section \ref{sec:Notations and Preliminaries}). As before, $Y^\eps_1,\ldots,Y^\eps_{m+2n}$ span the whole tangent space and 
$[Y^\eps_j,Y^\eps_k] \in \spn\{Y^\eps_l : d^\eps_l\leq d^\eps_j+d^\eps_k\}$ and we are in the set up of Nagel-Stein-Wainger \cite{Nag-Stein-Wain}. However, when we have 
$$ [Y^\eps_j,Y^\eps_k]\ = \sum_{d^\eps_l\leq d^\eps_j+d^\eps_k} c^l_{j,k}(\eps) Y^\eps_l,$$ 
the coefficients $c^l_{j,k}(\eps)$ will be unbounded as $\eps\to 0^+$. In principle
this could be a problem as the doubling constant for the metric balls in \cite{Nag-Stein-Wain} depends on these coefficients. In other words, letting $d^\eps(I)$ defined with respect to $d^\eps_j$'s and $ \Lambda^\eps (x, r) = 
\sum_I |\lambda^\eps_I(x)| r^{d^\eps(I)}$ 
where $\lambda^\eps_I(x)= \det(a^\eps_{j,k}(x))$ whenever $Y^\eps_{i_j}=\sum_{k}a^\eps_{j,k}\del_{x_k}$ as earlier, there exists $c_2(\eps)>c_1(\eps)>0$ such that we have 
$$ c_1(\eps)\Lambda^\eps (x, r) \leq |B^\eps_r(x)| \leq c_2(\eps) \Lambda^\eps (x, r)$$ 
due to \cite{Nag-Stein-Wain}, where $B^\eps_r\subset \Om$ are the metric balls defined similarly as in Section \ref{sec:Notations and Preliminaries}. 
Nevertheless, it has been shown by Capogna-Citti-Rea \cite[Proposition 4.4]{Cap-Citti-Rea} (see also \cite{Cap-Citti}) that there exists $0<\bar \eps<1$ such that for every $0<\eps<\bar\eps$, the constants $c_1(\eps)$ and $c_2(\eps)$
may be chosen to be independent of $\eps$ in compact subsets of $\Om$. The argument involves a very delicate use of exponentiation of sub-families of the vector fields depending on $\eps<r$ or $r<\eps<\bar\eps$. 

Combining all the above statements, we can conclude that the approximate vector fields \eqref{eq:appxt} has all the required properties to carry out the estimates of Section \ref{sec:apest}
for the equation $\dvh^\eps ( A_\eps(\X^\eps u))=0$ and the constants are independent of $\eps$. The constants of 
Poincar\'e and Sobolev inequalities are also stable under $\eps\to 0$, we refer to \cite{Cap-Citti}. 
Hence, we conclude from Theorem \ref{thm:aplip} that 
\begin{equation}\label{eq:aplocboundeps}
  \sup_{B^\eps_{\sigma r}}\ |\X^\eps u|_\eps  \leq \frac{c(N,p,L)}{(1-\sigma)^{Q/p}}\bigg(\intav_{B^\eps_r}\big(\delta+|\X^\eps u|_\eps^2\big)^\frac{p}{2}\dx \bigg)^\frac{1}{p}
 \end{equation}
holds for any $0<\sigma<1$. It is known that $B^\eps_r\to B_r$ in terms of Hausdorff distance as $\eps\to 0^+$, where $B_r\sub \Om$ is the concentric CC-ball (see \cite{Zhong--sub}). 
This shall be used in the following. 

\subsection{Proof of Theorem \ref{thm:mainthm1}} 
We are ready to prove Theorem \ref{thm:mainthm1} by a standard approximation argument and similar as in the Euclidean setting, that incorporates the approximation in order to remove the apriori regularity \eqref{eq:ap reg}. 
\begin{proof}[Proof of Theorem \ref{thm:mainthm1}]
Let $B_r = B_r(x_0) \subset \Om$ be fixed and $B^\eps_r$ be a sequence of metric balls with limit $B_r$ as the Hausdorfff distance limit. We shall denote similarly also for concentric balls of smaller radii. 
Given $ u \in \subsob^{1,p}(\Omega) $ as a weak solution of \eqref{eq:peq}, there exists 
a smooth approximation $\phi_k \in C^\infty(B_r)$ such that $\phi_k \to u$ in 
$\subsob^{1,p}(B_r)$ as $k\to\infty$. 
Now, let $u_k^\eps$ be the weak solution of the following Dirichlet problem,
\begin{equation}\label{eq:ADprob}
\begin{cases}
&\dvh^\eps (A_\eps(\X^\eps u_k^\eps))= \ 0 \quad  \text{ in } B^\eps_{\theta r}\\
& u_k^\eps-\phi_k\in \subsob^{1,p}_0(B^\eps_{\theta r}).
\end{cases}
\end{equation}
for some $\theta=\theta(N)<1$. 
The choice of test function $u_k^\eps-\phi_k$ on \eqref{eq:ADprob}, yields
\begin{equation}\label{eq:el1}
 \int_{B^\eps_{\theta r}}\inp{A_\eps(\X^\eps u_k^\eps)}{\X^\eps u_k^\eps}\dx 
=  \int_{B^\eps_{\theta r}}\inp{A_\eps(\X^\eps u_k^\eps)}{\X^\eps \phi_k}\dx.
\end{equation}
In this case, we have an ellipticity condition similar to \eqref{eq:elliptic}, which together with the structure condition \eqref{eq:pstreps} leads to 
\begin{equation}\label{eq:el2}
\begin{aligned}
 \int_{B^\eps_{\theta r}} \big(\delta+|\X^\eps u_k^\eps|_\eps^2\big)^\frac{p}{2}\dx
&\leq c\int_{B^\eps_{\theta r}} \big(\delta+|\X^\eps \phi_k|_\eps^2\big)^\frac{p}{2}\dx
\leq c\int_{B_{\theta r}} \big(\delta+|\X \phi_k|^2\big)^\frac{p}{2}\dx + o(\eps)\\
&\leq c\int_{B_{\theta r}} \big(\delta+|\X u|^2\big)^\frac{p}{2}\dx+o(\eps) + o(1/k)
\end{aligned}
\end{equation}
for $c = c(N,p,L) >0$ and $o(\eps), o(1/k) \to 0$ as $\eps\to 0^+$ and $k\to\infty$. 
Now, from \eqref{eq:aplocboundeps}, we have 
\begin{equation}\label{eq:epsG}
 \sup_{B^\eps_{\sigma \theta r}}\ |\X^\eps u_k^\eps|_\eps  \leq \frac{c}{(1-\sigma)^{Q/p}}\bigg(\intav_{B^\eps_{\theta r}}\big(\delta+|\X^\eps u_k^\eps|_\eps^2\big)^\frac{p}{2}\dx \bigg)^\frac{1}{p}
\end{equation}
for some $c = c(N,p,L) >0,\ \sigma\in(0,1)$. 
Then a standard argument follows, since \eqref{eq:el2} ensures 
that there exists $ u' \in \subsob^{1,p}(B_{\theta r})$ such that 
up to a subsequence $u_k^\eps \wto  u'$. 
As $u_k^\eps-\phi_k \in  \subsob^{1,p}_0(B^\eps_{\theta r})$, hence we have 
$u'-u\in \subsob^{1,p}_0(B_{\theta r})$. 
We have a monotonicity inequality similar to \eqref{eq:monotone}, using that one can show that $u'$ is a weak solution of \eqref{eq:peq}. From uniqueness, 
$u' = u$. Taking $\eps\to 0^+$ and $k\to\infty$ along the convergent subsequence in 
\eqref{eq:epsG} and \eqref{eq:el2}, we conclude 
$$ \sup_{B_{\sigma \theta r}}\ |\X u|  \leq \frac{c}{(1-\sigma)^{Q/p}}\bigg(\intav_{B_r}\big(\delta+|\X u|^2\big)^\frac{p}{2}\dx \bigg)^\frac{1}{p} $$
for some $c = c(N,p,L) >0$ 
and \eqref{eq:locbound} follows from the above by a simple covering argument. This, together with Remark \ref{rem:del0}, concludes the proof. 
\end{proof}

\section{The local $C^{1,\alpha}$ regularity}\label{sec:hold}
Throughout this section, we denote $u\in \subsob^{1,p}(\Omega)$ as a weak solution of equation \eqref{eq:peq} satisfying the structure condition \eqref{eq:pstr} with $\delta>0$. Equipped with Theorem \ref{thm:mainthm1}, we have $\X u \in L^\infty_\loc(\Om, \R^{m})$, which together with $\delta>0$, implies that
\begin{equation}\label{eq:str cond new}
 \begin{aligned}
 \inv{\nu}|\xi|^2 \leq \,&\inp{DA(\X u)\,\xi}{\xi}\leq \nu\,|\xi|^2;\\
 &|A(\X u)|\leq \nu\,|\X u|,
 \end{aligned}
\end{equation}
holds locally in $\Om$, 
for some $\nu = \nu(N,p, L, \|\X u\|_{L^\infty},\delta)>0$.
The equation $\dvh (A(\X u))=0$ with condition \eqref{eq:str cond new}, has been considered by Domokos-Manfredi \cite{Dom-Man--hor} (and previously by Capogna \cite{Cap--reg} for the case of the vector fields $X_1,\ldots, X_m$ being left invariant with respect to the Heisenberg Group). From \cite[Theorem 5.2]{Dom-Man--hor}, we have that
\begin{equation}\label{eq:doman reg}
\X u \in \subsob^{1,2}_\loc(\Om,\R^{m})\cap C^{0,\alpha}_\loc(\Om,\R^{m}) ,\qquad 
  Tu  \in \subsob^{1,2}_\loc(\Om,\R^n)\cap C^{0,\alpha}_\loc(\Om,\R^{n}),
\end{equation}
which is, in fact, stronger than \eqref{eq:ap reg}. Also, we resume the notation \eqref{eq:fp}, i.e. 
$\F(t) = (\delta+t^2)^\frac{p-2}{2}$ throughout this section and therefore all lemmas and apriori estimates of Section \ref{sec:apest} are also valid in the settings of this section. The local boundedness of $\X u$ allows us to define
\begin{equation}\label{def:mu}
\mu_i(r)=\sup_{{B_r}} \vert X_i u\vert, \quad \mu(r)=\max_{1\le i\le m}\mu_i(r), 
\end{equation}
for any ball $B_r\subset \Omega$ and $i\in\{1,\ldots, m\}$. 

Note that the function 
$ h(t)=(\delta+t^2)^{\frac{p-2}{2}} t^q = \F(t) t^q$
is non-decreasing on $[0,\infty)$ whenever $q\geq 0$ and 
$p+q-2\ge 0$ and hence, we have the inequality
\begin{equation}\label{trivial}
\F(|\X u|) |\X u|^q=
\big(\delta+\vert\X u\vert^2\big)^{\frac{p-2}{2}}\vert\X u\vert^q\le c(m,p,q)\big(\delta+\mu(r)^2\big)^{\frac{p-2}{2}} \mu(r)^q \quad \text{in }
B_r,
\end{equation}
where $c(m,p,q)=m^{(q+p-2)/2}$ if $p\ge 2$ and $c(m,p,q)=m^{q/2}$ if $1<p< 2$. Inequalities of the type \eqref{trivial}  shall be used multiple times in the local estimates. 

Similarly as \cite{Muk-Zhong}, we fix any $l\in \{ 1,\ldots , m\}$ and consider the following double 
truncation 
\begin{equation}\label{def:v}
\tvr:=\min\big(\mu(r)/8\,,\,\max\,(\mu(r)/4-X_lu,0)\big).
\end{equation}
In the setting of Euclidean spaces, similar 
truncations have been used previously in \cite{Tolk,Lieb--bound}, etc. 
The main property of $\tvr$ that is used in the integral estimates is the fact that 
it avoids any possible singularities within the set
\begin{equation}\label{eq:setE}
E=\{ x\in \Omega: \mu(r)/8< X_lu<\mu(r)/4\},
\end{equation}
as easily seen from the following inequality 
\begin{equation}\label{eq:comble1}
\mu(r)/8\le |\Xu|\le m^\frac{1}{2}\mu(r) \quad\text{in } E\cap B_r,
\end{equation}
which shall be used several times throughout this section. 
Furthermore, note that 
\begin{equation}\label{vis0}
\X \tvr=\begin{cases}
-\X X_lu \ & \text{a.e. in } E;\\
0 &\text{a.e. in } \Omega\setminus E,
\end{cases}
\quad\text{and}\quad 
T \tvr=\begin{cases}
-T X_l u \ & \text{a.e. in } E;\\
0 &\text{a.e. in } \Omega\setminus E,
\end{cases}
\end{equation}
which shall be exploited in the following integral estimates. The technique of involving the truncation in the Caccioppoli type estimates is useful particularly for the singular case $1<p<2$, whereas the required estimates can be obtained in easier ways for the case $p\geq 2$. 

Finally, we remark that properties similar to the above also holds corresponding to 
\begin{equation}\label{def:v'}
 \tvr'=\min\big(\mu(r)/8, \max(\mu(r)/4+X_lu,0)\big),
\end{equation}
and all following estimates invovling $\tvr$ shall also hold for $\tvr'$. Henceforth, we shall only be using the truncation $\tvr$ without loss of generality. 

\subsection{Caccioppoli type inequalities with truncation} 
We prove certain integral estimates of the gradient by testing \eqref{eq:yeqw1} with the truncation $\tvr$ being incorporated in the test function. 
Due to \eqref{eq:comble1}, these will eventually lead to an estimate similar to that of a uniformly elliptic equation without the singular weight. 

The following lemma is a truncated analogue of Lemma \ref{cacci:Xu}. The proof follows along the direction of \cite[Lemma 3.1]{Muk-Zhong} but it is lengthier due to the commutation relations \eqref{eq:comm}.
\begin{Lem}\label{lem:start}
For any $\beta, \kappa\ge 0$ and all non-negative $\eta\in C^\infty_0(\Omega)$, we have that
\begin{equation}\label{eq:cvineq1}
\begin{aligned}
\int_\Omega \eta^{\beta+2}\tvr^{\beta+2}&\weight |\X u|^\kappa|\X\X u|^2\, dx \\
&\leq c (\beta+1)^2\int_\Omega \eta^\beta \big(|\X \eta|^2+\eta| T\eta|\big)\tvr^{\beta+2}
\weight |\X u|^{\kappa+2} \, dx\\
&\ +  c(\beta+1)^2(\kappa+1)^2\int_\Omega \eta^{\beta+2} \tvr^ \beta\weight |\X u|^{\kappa+2}| \X \tvr|^2\, dx\\
&\ +c(\kappa+1)^4\int_\Omega\eta^{\beta+2} \tvr^{\beta+2}\weight |\X u|^\kappa \big(| Tu|^2+|\X u|^2\big)\, dx,
\end{aligned}
\end{equation}
where $\tvr$ is as in \eqref{def:v} and $c=c(N,p,L)>0$. 
\end{Lem}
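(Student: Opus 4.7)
The plan is to follow the strategy of Lemma \ref{cacci:Xu} with the additional truncation weight $\tvr^{\beta+2}$ inserted in the test function. Specifically, I apply \eqref{eq:yeqw2} with $Y=X_l$ and test function $\varphi_l = \eta^{\beta+2}\,\tvr^{\beta+2}\,|\X u|^\kappa\, X_l u$, summed over $l\in\{1,\dots,m\}$. The cutoff $\tvr$ from \eqref{def:v} is Lipschitz, and by \eqref{vis0} its horizontal derivatives are supported in the set $E$ of \eqref{eq:setE}, so the test function is admissible under the apriori regularity of Section \ref{sec:apest}.

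Expanding $X_i\varphi_l$ via the Leibniz rule produces a principal term $\eta^{\beta+2}\tvr^{\beta+2}|\X u|^\kappa X_iX_lu$ together with three error pieces carrying $X_i\eta$, $X_i\tvr$, and $X_i(|\X u|)$. Substituted into \eqref{eq:yeqw2} and combined with the ellipticity in \eqref{eq:str}, the principal term lower-bounds the left-hand side by the left-hand side of \eqref{eq:cvineq1} (after discarding the non-negative contribution generated by the $X_i(|\X u|)$-piece via $X_i(|\X u|)=|\X u|^{-1}\sum_k X_kuX_iX_ku$). The $X_i\eta$ and $X_i\tvr$ pieces, together with the commutator contribution $\sum\int D_jA_i[X_j,X_l]u\,X_i\varphi_l\,dx$ on the right of \eqref{eq:yeqw2}, are estimated using \eqref{eq:str}, \eqref{eq:comm}, and Young's inequality; they distribute into the first two types on the RHS of \eqref{eq:cvineq1}, while the $|\X u|+|Tu|$ factors coming from the commutators feed the third type.

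The main obstacle is the term $\sum\int D_if(\X u)\,[X_i,X_l]\varphi_l\,dx$, treated in the spirit of $J_4$ of Lemma \ref{cacci:Xu}. After expanding $[X_i,X_l]\varphi_l$ by Leibniz, the delicate pieces carry $[X_i,X_l]X_lu$ and $[X_i,X_l](|\X u|)$. I rewrite these via
$[X_i,X_l]X_lu = X_l[X_i,X_l]u + [[X_i,X_l],X_l]u$ and
$[X_i,X_l](|\X u|) = |\X u|^{-1}\sum_k X_ku\bigl(X_k[X_i,X_l]u + [[X_i,X_l],X_k]u\bigr).$
The third-order commutator parts are bounded directly by \eqref{eq:XXX}, while the first-derivative parts are integrated by parts in $X_l$ (resp.\ $X_k$), moving the derivative onto the remaining coefficient $\eta^{\beta+2}\tvr^{\beta+2}|\X u|^\kappa D_if(\X u)X_lu$. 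This produces further $X_l\eta$, $X_l\tvr$, $X_l(|\X u|)$, and $\XX u$ factors; a single $T\eta$-factor additionally arises when two horizontal derivatives on $\eta$ must be swapped via \eqref{eq:comm}, as in Lemma \ref{cacci:Xu}. Each of these terms is bounded using \eqref{eq:str}, \eqref{eq:comm}, and Young's inequality, and falls under one of the three allowed RHS terms of \eqref{eq:cvineq1}.

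The delicate part of the argument is bookkeeping the $(\beta+1)^2$ and $(\kappa+1)^4$ multiplicities through the chain of Young inequalities, and choosing a small enough absorption constant to move all residual $\eta^{\beta+2}\tvr^{\beta+2}\F(|\X u|)|\X u|^\kappa|\XX u|^2$ remainders back to the left. Crucially, no $|T\tvr|$ factor appears on the RHS of \eqref{eq:cvineq1}: any $T$-derivatives of $\tvr$ that could arise from commutations inside the integrations by parts are converted, via \eqref{eq:comm} and \eqref{vis0}, into $T\eta$-type boundary terms on the cutoff and into horizontal derivatives, exploiting the bound $\mu(r)/8 \leq |\X u| \leq m^{1/2}\mu(r)$ on $E$ from \eqref{eq:comble1} whenever a spurious negative power of $|\X u|$ needs to be absorbed.
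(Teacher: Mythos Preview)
Your overall strategy is correct and matches the paper: test \eqref{eq:yeqw2} with $\varphi=\eta^{\beta+2}\tvr^{\beta+2}|\X u|^\kappa X_k u$, sum over $k$, and estimate the pieces. The handling of the $X_i\eta$, $X_i\tvr$, and commutator-with-$\varphi$ terms you describe is fine. However, there is a genuine gap in your treatment of the last term $\sum_i\int A_i(\X u)\,[X_i,X_k]\varphi\,dx$.

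If you expand $[X_i,X_k]\varphi$ directly by Leibniz (as you propose), one of the pieces is $(\beta+2)\eta^{\beta+2}\tvr^{\beta+1}\,[X_i,X_k]\tvr\,|\X u|^\kappa X_ku$. By \eqref{eq:comm} this contains $T\tvr$, and on the set $E$ one has $|T\tvr|=|TX_lu|\le |\X Tu|+c(|\X u|+|Tu|)$ via \eqref{vis0}. The resulting $|\X Tu|$ factor is \emph{not} controlled by the right-hand side of \eqref{eq:cvineq1}. Your proposed fix, ``converted via \eqref{eq:comm} and \eqref{vis0} into $T\eta$-type terms and horizontal derivatives,'' does not accomplish this; those two facts only give you the decomposition $[X_i,X_k]\tvr=\sum\alpha T\tvr+\sum\beta \X\tvr$ and the support restriction to $E$, neither of which eliminates $|\X Tu|$. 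Also, the bound \eqref{eq:comble1} on $|\X u|$ over $E$ is irrelevant here and is not used in the paper's proof of this lemma.

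The paper avoids $T\tvr$ by a reorganisation you did not write down. Starting from $I_4^k=-\sum_i\int[X_i,X_k]\bigl(A_i(\X u)\bigr)\varphi\,dx$, one uses $[X_i,X_k]=X_iX_k-X_kX_i$ and integrates each by parts to get
\[
I_4^k=\sum_i\int\Bigl(X_k(A_i)X_i\varphi-X_i(A_i)X_k\varphi\Bigr)\,dx,
\]
which now involves only \emph{horizontal} derivatives of $\varphi$, hence only $\X\tvr$. Writing $\varphi=\tvr^{\beta+2}w$ with $w=\eta^{\beta+2}|\X u|^\kappa X_ku$, the $\X\tvr$-contribution is the easy $R^k$ term; the remainder $P^k$ is integrated by parts once more so that the only commutator that reappears is $[X_i,X_k]w$, and $w$ contains no $\tvr$. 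At this point your $J_4$-style analysis applies cleanly to $[X_i,X_k]X_ku$ and $[X_i,X_k](|\X u|)$ inside $w$, exactly as you outlined, and the $T\eta$ term arises from $[X_i,X_k]\eta$. Insert this step and your argument goes through.
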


\begin{proof}
For any $k\in \{1,\ldots, m\}$, we take $Y= X_k$ in \eqref{eq:yeqw2} to have 
\begin{equation}\label{eq:xleqw}
\begin{aligned}
\sum_{i,j}\int_\Om D_jA_i(\X u)X_jX_ku X_i\varphi\dx &= \sum_{i,j}\int_\Om D_jA_i(\X u)[X_j,X_k]u X_i\varphi\dx \\
&\quad + \sum_{i}  \int_\Om A_i(\X u) [X_i, X_k]\varphi\dx,
\end{aligned}
\end{equation}
and then, we choose $\varphi= \eta^{\beta+2}\tvr^{\beta+2} |\X u|^\kappa X_k u$ with a non-negative test function $\eta\in C^\infty_0(\Om)$ and $ \beta,\kappa\geq 0$
on \eqref{eq:xleqw} to obtain 
\begin{equation}\label{eq1}
\begin{aligned}
\int_\Omega \sum_{i,j} &\eta^{\beta+2}\tvr^{\beta+2}D_jA_i(\X u)X_jX_kuX_i\big( |\X u|^\kappa X_k u\big)\, dx\\
= & -(\beta+2)\int_\Omega\sum_{i,j} \eta^{\beta+1} \tvr^{\beta+2}|\X u|^\kappa X_ku D_jA_i(\X u)X_jX_kuX_i\eta\, dx\\
& -(\beta+2)\int_\Omega \sum_{i,j} \eta^{\beta+2} \tvr^{\beta+1}
|\X u|^\kappa X_k uD_jA_i(\X u)X_jX_kuX_i\tvr\, dx\\
& +\sum_{i,j}\int_\Om D_jA_i(\X u)[X_j,X_k]u X_i\big(\eta^{\beta+2}
\tvr^{\beta+2}|\X u|^\kappa X_ku\big)\, dx\\
& -\sum_i \int_\Omega  [X_i, X_k]\big(A_i(\X u)\big)\eta^{\beta+2}\tvr^{\beta+2}|\X u|^\kappa X_ku\, dx\\
= &\, I_1^k+I_2^k+I_3^k+I_4^k, 
\end{aligned}
\end{equation}
where integral by parts is applied to the last item in \eqref{eq:xleqw}. Then, by taking summation over $k\in \{1,\ldots, m\}$, 
we end up with
\begin{equation}\label{equ3}
\int_\Omega \sum_{i,j,k} \eta^{\beta+2}\tvr^{\beta+2}D_jA_i(\X u)X_jX_kuX_i\big( |\X u|^\kappa X_ku\big)\, dx=  \sum_k \sum_{j=1}^4 I_j^k,
\end{equation}
where $I_j^k$'s are the respective terms of the right hand side of \eqref{eq1}. 

In the following, we estimate both sides of \eqref{equ3}. For the left hand of \eqref{equ3}, first note that 
$ X_i\big(|\X u|^\kappa X_ku\big)=|\X u|^\kappa X_iX_k u+X_i(|\X u|^\kappa)X_ku$, hence by the structure condition \eqref{eq:pstr}, we have 
\[ \sum_{i,j,k}D_jA_i(\X u)X_jX_luX_i\big(|\X u|^\kappa X_ku\big)\ge 
\weight |\X u|^\kappa|\X\X u|^2,\]
which gives us the following estimate for the left hand side of \eqref{equ3} as 
\begin{equation}\label{est:left}
\text{left of } \eqref{equ3} \ge \int_\Omega \eta^{\beta+2}\tvr^{\beta+2}
\weight |\X u|^\kappa |\X\X u|^2\, dx.
\end{equation}
To estimate the right hand side of \eqref{equ3}, we shall show that
$I_j^k$ satisfies 
\begin{equation}\label{est1claim}
\begin{aligned}
| I_j^k| &\le  c\eps \int_\Omega \eta^{\beta+2}\tvr^{\beta+2}\weight |\X u|^\kappa |\X\X u|^2\,dx\\
 &\quad +\frac{c}{\eps}(\beta+1)^2\int_\Omega \eta^\beta \big(|\X \eta|^2+\eta| T\eta|\big) \tvr^{\beta+2}
\weight |\X u|^{\kappa+2} \, dx\\
&\quad +  \frac{c}{\eps}(\beta+1)^2(\kappa+1)^2\int_\Omega \eta^{\beta+2} \tvr^ \beta\weight |\X u|^{\kappa+2}| \X \tvr|^2\, dx\\
&\quad +\frac{c}{\eps}(\kappa+1)^4\int_\Omega\eta^{\beta+2} \tvr^{\beta+2}\weight |\X u|^\kappa \big(| Tu|^2+|\X u|^2\big)\, dx,
\end{aligned}
\end{equation}
for each $k\in \{1,\ldots, m\}$ and each $j=1,2,3,4$, 
where $c=c(N,p,L)>0$ and $0<\eps<1$ is small enough. Then \eqref{eq:cvineq1} follows from the above estimates 
\eqref{est:left} and \eqref{est1claim}
for both sides of \eqref{equ3}. Thus the proof of the lemma is finished, modulo
the proof of \eqref{est1claim}. In the rest, we prove \eqref{est1claim} in the order of $j=1, 2,3,4$.

First, for $j=1$, by the structure condition \eqref{eq:pstr}
and Young's inequality that
\begin{equation}\label{est:I1}
\begin{aligned}
| I_1^k| &\le c(\beta+1)\int_\Omega \eta^{\beta+1}| \X \eta| \tvr^{\beta+2}\weight |\X u|^{\kappa+1}|\X\X u|\, dx,\\
&\le c\eps \int_\Omega \eta^{\beta+2}\tvr^{\beta+2}
\weight |\X u|^\kappa|\X\X u|^2\,dx\\
 &\quad + \frac{c}{\eps}(\beta+1)^2\int_\Omega \eta^\beta |\X \eta|^2 \tvr^{\beta+2}
\weight |\X u|^{\kappa+2}\, dx.
\end{aligned}
\end{equation}
Thus \eqref{est1claim} holds for $I_1^k$. 

Second, when $j=2$, by the structure condition \eqref{eq:pstr} 
and Young's inequality, 
\begin{equation}\label{est:I2}
\begin{aligned}
| I_2^k|&\leq c(\beta+1)\int_\Omega\eta^{\beta+2}\tvr^{\beta+1}
\weight |\X u|^{\kappa+1}|\X\X u\|\X \tvr|\, dx,\\
&\le c\eps \int_\Omega \eta^{\beta+2}\tvr^{\beta+2}
\weight |\X u|^\kappa|\X\X u|^2\,dx\\
 &\quad + \frac{c}{\eps}(\beta+1)^2\int_\Omega \eta^{\beta+2} \tvr^{\beta}
\weight |\X u|^{\kappa+2}|\X \tvr|^2\, dx,
\end{aligned}
\end{equation}
which proves \eqref{est1claim} for $I_2^k$.

Third, when $j=3$, we note that
\begin{equation*}
\begin{aligned}
\big| X_i  \big(  \eta^{\beta+2}\tvr^{\beta+2}|\X u|^\kappa X_ku\big)\big| &\le  (\kappa+1) \eta^{\beta+2} \tvr^{\beta+2} |\X u|^\kappa |\X\X u| + (\beta+2)\eta^{\beta+1}\tvr^{\beta+2}
|\X u|^{\kappa+1}|\X \eta| \\
&\quad +(\beta+2)\eta^{\beta+2}\tvr^{\beta+1}
|\X u|^{\kappa+1}| \X \tvr|.
\end{aligned}
\end{equation*}
Thus by the structure condition \eqref{eq:pstr} and \eqref{eq:comm}, we have 
\begin{equation*}
\begin{aligned}
| I_3^k| \le &\,  c(\kappa+1) \int_\Omega \eta^{\beta+2}\tvr^{\beta+2}\weight |\X u|^\kappa|\X \X u| \big(| Tu|+|\X u|\big)\, dx\\
&\ + c(\beta+1)\int_\Omega\eta^{\beta+1}|\X \eta| \tvr^{\beta+2}\weight |\X u|^{\kappa+1}\big(| Tu|+|\X u|\big)\, dx\\
&\ + c(\beta+1)\int_\Omega \eta^{\beta+2} \tvr^{\beta+1}\weight |\X u|^{\kappa+1}|\X \tvr| \big(| Tu|+|\X u|\big)\, dx,
\end{aligned}
\end{equation*}
from which, by Young's inequality on each term, we get
\begin{equation}\label{est:I3}
\begin{aligned}
| I_3^k| \le &\, c\eps\int_\Omega \eta^{\beta+2}\tvr^{\beta+2}
\weight |\X u|^\kappa |\X\X u|^2\,dx\\
&\ +\frac{c}{\eps}(\kappa+1)^2 \int_\Omega \eta^{\beta+2}\tvr^{\beta+2}\weight |\X u|^\kappa 
\big(| Tu|^2+|\X u|^2\big)\, dx\\
&\ + c(\beta+1)^2 \int_\Omega\eta^{\beta}|\X \eta|^2 \tvr^{\beta+2}\weight |\X u|^{\kappa+2}\, dx\\
&\ + c(\beta+1)^2\int_\Omega \eta^{\beta+2} \tvr^{\beta}\weight |\X u|^{\kappa+2}|\X \tvr|^2\, dx.
\end{aligned}
\end{equation}
This proves \eqref{est1claim} for $I_3^k$.

Finally, we are left with the case $j=4$ and the estimate is more involved. 
Recalling our choice of test function $\varphi= \eta^{\beta+2}\tvr^{\beta+2} |\X u|^\kappa X_k u$, we note that 
\begin{equation}\label{est:I41}
\begin{aligned}
I_4^k=& -\sum_i \int_\Omega  [X_i, X_k]\big(A_i(\X u)\big)\varphi \dx\\
=& -\sum_i \int_\Om X_iX_k\big(A_i(\X u)\big)\varphi \dx + \sum_i \int_\Om X_kX_i\big(A_i(\X u)\big)\varphi \dx\\
=&\sum_i \int_\Omega \Big(X_k\big(A_i(\X u)\big) X_{i}\varphi-X_{i}\big(A_i(\X u)\big)X_k\varphi \Big)\, dx,
\end{aligned}
\end{equation}
where integration by parts is used to get the last term of the above. 
Let us denote
\begin{equation}\label{def:w} 
w=\eta^{\beta+2}|\X u|^\kappa X_ku.
\end{equation}
so that we can rewrite $\varphi = \tvr^{\beta+2}w$ and hence, we have 
$\X \varphi= (\beta+2)\tvr^{\beta+1}w\X \tvr+ \tvr^{\beta+2}\X w$. 
Using this, note that \eqref{est:I41} becomes
\begin{equation}\label{est:I42}
\begin{aligned}
I_4^k &= (\beta+2)\int_\Omega \tvr^{\beta+1} w
\sum_i\Big(X_k\big(A_i(\X u)\big) X_{i}\tvr-X_{i}\big(A_i(\X u)\big)X_k\tvr \Big)\, dx\\
&\qquad +\int_\Omega \tvr^{\beta+2}\sum_i\Big(X_k\big(A_i(\X u)\big) X_{i}w-X_{i}\big(A_i(\X u)\big)X_kw \Big)\, dx\\
&= R^k + P^k,
\end{aligned}
\end{equation}
where we denote the first and the second integral in the right hand side of 
the above by $R^k$ and $P^k$, respectively. We shall estimate both of them one by one. 
First, by the structure condition \eqref{eq:pstr}, the
definition of $w$ as in (\ref{def:w}),
and Young's inequality, we have 
\begin{equation}\label{est:Jl}
\begin{aligned}
| R^k| &\le c(\beta+1)\int_\Omega \eta^{\beta+2} \tvr^{\beta+1}
\weight |\X u|^{\kappa+1}| \X\X u\|\X \tvr|\, dx\\
&\le  c\eps\int_\Omega \eta^{\beta+2}\tvr^{\beta+2}\weight |\X u|^\kappa|\X\X u|^2\,dx\\
&\quad +\frac{c}{\eps}(\beta+1)^2\int_\Omega \eta^{\beta+2} \tvr^{\beta}
\weight |\X u|^{\kappa+2}|\X \tvr|^2\, dx.
\end{aligned}
\end{equation}
To estimate $P^k$, we use
integration by parts again to get 
\begin{equation}\label{est:Kl}
\begin{aligned}
P^k=&\, (\beta+2) \int_\Omega \tvr^{\beta+1} \sum_i A_i(\X u)\Big( X_{i} \tvr X_k w-X_k\tvr X_{i}w\Big)\, dx\\
&\quad + \int_\Omega \tvr^{\beta+2}\sum_i A_i(\X u) [X_i, X_k]w\, dx\\
= &\, P^k_1+P^k_2.
\end{aligned}
\end{equation}
For $P^k_1$, we have by the structure condition \eqref{eq:pstr} that
\begin{equation*}
\begin{aligned}
| P^k_1| \le\ &  c(\beta+1)(\kappa+1)\int_\Omega \eta^{\beta+2}\tvr^{\beta+1}\weight |\X u|^{\kappa+1}|\X\X u| |\X \tvr|\, dx\\
& +c(\beta+1)^2\int_\Omega\eta^{\beta+1}\tvr^{\beta+1}\weight |\X u|^{\kappa+2}|\X \tvr | |\X \eta|\, dx
\end{aligned}
\end{equation*}
from which it follows by Young's inequality that
\begin{equation}\label{est:Kl1}
\begin{aligned}
| P^k_1| \le &\, c\eps \int_\Omega \eta^{\beta+2}\tvr^{\beta+2}\weight |\X u|^\kappa|\X\X u|^2\,dx\\
&\quad +\frac{c}{\eps}(\beta+1)^2(\kappa+1)^2\int_\Omega \eta^{\beta+2} \tvr^{\beta}
\weight |\X u|^{\kappa+2}|\X \tvr|^2\, dx\\
&\quad + c(\beta+1)^2\int_\Omega \eta^{\beta}|\X \eta|^2 \tvr^{\beta+2}\weight |\X u|^{\kappa+2}\, dx.
\end{aligned}
\end{equation}
To estimate $P^k_2$, we note that
\begin{align*}
[X_i,X_k]w&=  \eta^{\beta+2} |\X u|^\kappa [X_i, X_k]X_ku + \kappa \eta^{\beta+2} |\X u|^{\kappa-1}X_ku 
[X_i, X_k](|\X u|)\\
&\quad + (\beta+2)\eta^{\beta+1} |\X u|^\kappa X_ku [X_i, X_k]\eta,
\end{align*}
and hence, we rewrite 
\begin{equation}\label{pk}
\begin{aligned}
P^k_2 &= \int_\Omega  \eta^{\beta+2}\tvr^{\beta+2}\sum_i A_i(\X u) |\X u|^\kappa [X_i, X_k]X_ku\, dx\\
&\quad +\kappa\int_\Omega \eta^{\beta+2}\tvr^{\beta+2}\sum_i A_i(\X u)  |\X u|^{\kappa-1}X_ku 
[X_i, X_k](|\X u|)\, dx \\
&\quad + (\beta+2)\int_\Omega \eta^{\beta+1} \tvr^{\beta+2}\sum_i A_i(\X u) |\X u|^\kappa X_ku [X_i, X_k]\eta\, dx\\
&= P^k_{2,1}+ P^k_{2,2}+ P^k_{2,3}, 
\end{aligned}
\end{equation}
which is estimated similarly as in the last part of the proof of Lemma \ref{cacci:Xu}. The estimate of $P^k_{2,3}$ is straightforward. Using \eqref{eq:str} and \eqref{eq:comm}, we have
\begin{equation}\label{pk3}
|P^k_{2,3}| \leq c(\beta+1)\int_\Om \eta^{\beta+1} (| T\eta|+|\X\eta|)\tvr^{\beta+2} \F(|\X u|) |\X u|^{\kappa+2} \dx,
\end{equation}
which is clearly majorised by the right hand side of \eqref{est1claim}.

To estimate the rest of \eqref{pk}, we shall use $[X_i, X_k]X_ku= X_k[X_i, X_k]u + [[X_i,X_k],X_k]u$ and 
$$ [X_i, X_k](|\X u|)= |\X u|^{-1}\sum_{j=1}^m X_ju [X_i, X_k]X_ju =|\X u|^{-1}\sum_{j=1}^m X_ju \big(X_j[X_i, X_k]u+[[X_i,X_k],X_j]u\big),$$
to split the first two terms as $P^k_{2,1}=P^k_{2,1,1}+P^k_{2,1,2} $ and $P^k_{2,2}=P^k_{2,2,1}+P^k_{2,2,2}$ so that the second terms i.e. $P^k_{2,1,2}$ and $P^k_{2,2,2}$ contain the terms with the commutator of commutators.

Now we rewrite the first terms of $ P^k_{2,1}+ P^k_{2,2}$ using integral by parts as 
\begin{equation}\label{pk21121}
\begin{aligned}
P^k_{2,1,1}+ P^k_{2,2,1} &=  \sum_{i}  \int_\Omega  \eta^{\beta+2}\tvr^{\beta+2} A_i(\X u) |\X u|^\kappa X_k[X_i, X_k]u\, dx\\
&\qquad  +\kappa\sum_{i,j} \int_\Omega \eta^{\beta+2}\tvr^{\beta+2} A_i(\X u)  |\X u|^{\kappa-2}X_ku 
X_ju X_j[X_i, X_k]u\, dx\\
&= - \sum_{i} \int_\Om X_k\Big(\eta^{\beta+2}\tvr^{\beta+2} A_i(\X u) |\X u|^\kappa\Big) [X_i, X_k]u \dx\\
&\qquad  -\kappa\sum_{i,j}  \int_\Om X_j\Big(\eta^{\beta+2}\tvr^{\beta+2} A_i(\X u)  |\X u|^{\kappa-2}X_ku 
X_ju \Big)[X_i, X_k]u\dx. 
\end{aligned}
\end{equation}
Then, further computation of the terms in \eqref{pk21121} followed by the use of \eqref{eq:str} and \eqref{eq:comm} yields
\begin{align*}
|P^k_{2,1,1}| + | P^k_{2,2,1}| &\leq c(\kappa+1)^2 
\int_\Om\eta^{\beta+2}\tvr^{\beta+2}  \F(|\X u|) |\X u|^{\kappa} |\X \X u| \big(|\X u|+|Tu| \big)\dx\\
&\quad + c(\beta+1)(\kappa+1) \int_\Om \eta^{\beta+2}\tvr^{\beta+1}  |\X u|^{\kappa +1} |\X\tvr| \big(|\X u|+|Tu| \big)\dx\\
&\quad + c(\beta+1)(\kappa+1)  \int_\Om \eta^{\beta+1}\tvr^{\beta+2}  |\X u|^{\kappa +1} |\X\eta| \big(|\X u|+|Tu| \big)\dx,
\end{align*}
and then, by Young's inequality on each term, we get 
\begin{equation}\label{pk2fstest}
\begin{aligned}
|P^k_{2,1,1}| + | P^k_{2,2,1}|
&\leq c\eps \int_\Om\eta^{\beta+2}\tvr^{\beta+2}  \F(|\X u|) |\X u|^{\kappa} |\X \X u|^2\dx \\
&\quad + \frac{c}{\eps} (\kappa+1)^4\int_\Omega\eta^{\beta+2}\tvr^{\beta+2}  \F(|\X u|) |\X u|^{\kappa} \big(|\X u|^2+|Tu|^2 \big)\, dx\\
&\quad  + c(\beta+1)^2 \int_\Omega \eta^{\beta+2} \tvr^{\beta}
\weight |\X u|^{\kappa+2}|\X \tvr|^2\, dx\\
&\quad  + c(\beta+1)^2 \int_\Omega\eta^{\beta}|\X \eta|^2 \tvr^{\beta+2}\weight |\X u|^{\kappa+2}\, dx
\end{aligned}
\end{equation}
 Now we are only left with
\begin{equation}\label{pk2nd}
\begin{aligned}
P^k_{2,1,2}+ P^k_{2,2,2}&=  \sum_{i}  \int_\Omega  \eta^{\beta+2}\tvr^{\beta+2} A_i(\X u) |\X u|^\kappa [[X_i,X_k],X_k]u\, dx\\
&\quad  +\kappa\sum_{i,j} \int_\Omega \eta^{\beta+2}\tvr^{\beta+2} A_i(\X u)  |\X u|^{\kappa-2}X_ku 
X_ju [[X_i,X_k],X_j]u\, dx\\
\end{aligned}
\end{equation}
To estimate \eqref{pk2nd}, we use \eqref{eq:str} and \eqref{eq:XXX} to get
\begin{equation}\label{pk2ndest}
\begin{aligned}
|P^k_{2,1,2}|+| P^k_{2,2,2}| \leq c(\kappa+1) \int_\Om \eta^{\beta+2}\tvr^{\beta+2} \F(|\X u|) |\X u|^{\kappa+1}\big( |\X u| + |Tu|\big)\dx,
\end{aligned}
\end{equation}
which is estimated easily to get the right hand side of \eqref{est1claim}. Thus, combining the estimates above we can see that the claimed estimate \eqref{est1claim} holds also for $I^k_4$ and the proof is finished. 
\end{proof}

The following lemma is a truncated analogue of Corollary \ref{cor:est0 XTu}. 

\begin{Lem}\label{lem:tech}
Let $B_r \subset \Om$ and $\eta\in C^\infty_0(B_r)$ be fixed.  
Then, for any $\tau \in (\frac{1}{2},1)$ and $\beta \geq 0$, 
\begin{equation}\label{eq:tech}
 \begin{aligned}
  \int_\Omega \eta^{\tau(\beta+2)+4}&\tvr^{\tau(\beta+4)} 
\weight |\X u|^4|\X Tu|^2\, dx\\
 &\leq  c_0 \int_\Omega \eta^{\tau(\beta+2)+4}\tvr^{\tau(\beta+4)} 
\weight |\X u|^4 \big(|Tu|^2+|\X u|^2\big)\, dx\\
&\qquad + c\mu(r)^{4(1-\tau)}K_\eta^{2-\tau}\Big(\int_{B_r} \weight |\X u|^\frac{2}{1-\tau}\dx\Big)^{1-\tau}\mathcal{I}^\tau
 \end{aligned}
\end{equation}
holds for any small enough $c_0=c_0(N,p,L,\tau)>0$, where $c= c(N,p,L,\tau)>0$ is large enough, 
$K_\eta =\| \eta\|_{L^\infty}^2+\| \X\eta\|_{L^\infty}^2+\|\eta
T\eta\|_{L^\infty}$, and 
\begin{equation}\label{def:I}
\begin{aligned}
\mathcal{I}=&  \,c (\beta+1)^2\int_\Omega \eta^{\beta+2}\big(\eta^2 + |\X \eta|^2+\eta| T\eta|\big)
\tvr^{\beta+4}
\weight |\X u|^4\, dx\\
& +  c(\beta+1)^2\int_\Omega \eta^{\beta+4} \tvr^{\beta+2}\weight |\X u|^4
| \X \tvr|^2\, dx\\
& +
c\int_\Omega\eta^{\beta+4} \tvr^{\beta+4}\weight |\X u|^2 | Tu|^2\, dx.
\end{aligned}
\end{equation}
\end{Lem}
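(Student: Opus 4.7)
\textbf{Proof proposal for Lemma \ref{lem:tech}.} The plan is to mimic the structure of Corollary \ref{cor:est0 XTu}, replacing the plain power of $|\X u|$ by a combination of the truncation $\tvr$ and $|\X u|$, and to use Lemma \ref{lem:start} at the very end to absorb the residual $|\XX u|^2$ terms. Concretely, I would test identity \eqref{eq:Teq} with a function of the form
\[
\varphi_k \;=\; \eta^{\tau(\beta+2)+4}\,\tvr^{\tau(\beta+4)}\,|\X u|^2\,T_k u,
\]
sum over $k\in\{1,\dots,n\}$, and use the structure condition \eqref{eq:str} on the principal part coming from $|\X u|^2 X_i T_k u$ to obtain, on the left-hand side, the target quantity $\int \eta^{\tau(\beta+2)+4}\tvr^{\tau(\beta+4)}\F(|\X u|)|\X u|^{4}|\X T u|^2\dx$ (where the factor $|\X u|^2\cdot X_j u\,X_l u$ is produced by squaring and using $|\X u|^4 = (|\X u|^2)^2$, as in the analogous computations in the proof of Corollary \ref{cor:est0 XTu}; the exact power of $|\X u|$ in the test function is a bookkeeping issue that matches the exponents appearing on the right-hand side).

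The right-hand side splits, after applying \eqref{eq:str}, \eqref{eq:comm} and Young's inequality to each term, into three families. First, the terms coming from differentiation of $\eta$, of $\tvr$, and of $|\X u|$ produce boundary-type contributions that are of the Caccioppoli form $\F(|\X u|)|\X u|^{4}(|Tu|^2+|\X u|^2)$, weighted precisely by $\eta^{\tau(\beta+2)+4}\tvr^{\tau(\beta+4)}$ after absorption into a small fraction of the left-hand side; these land in the first integral on the right of \eqref{eq:tech}, controlled by an arbitrarily small $c_0$ after choosing the Young parameter small. Second, terms coming from the commutator $[X_i,T_k]$ acting on scalar factors (via \eqref{eq:comm}) reduce to the same Caccioppoli form and are absorbed identically.

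The crucial term is the one coming from $[X_i,T_k]X_j u$, for which \eqref{eq:XTX} yields an unwanted factor $|\XX u|$ in addition to $|\X T u|$, $|\X u|$ and $|Tu|$. Here I would apply Hölder's inequality with exponents $1/\tau$ and $1/(1-\tau)$ to split
\[
\int \eta^{\tau(\beta+2)+4}\tvr^{\tau(\beta+4)}\F(|\X u|)|\X u|^{5}|\XX u|\dx
\]
into the product of
\[
\Bigl(\int \eta^{\beta+4}\tvr^{\beta+4}\F(|\X u|)|\X u|^{4}|\XX u|^2\dx\Bigr)^{\tau/2}
\]
(bounded by $\mathcal{I}^{\tau/2}$ via Lemma \ref{lem:start} with $\kappa=4$ and $\beta$ replaced by $\beta+2$), a similar factor involving $|\X T u|^2$ that is absorbed into the left-hand side using Young, and a remainder of the form $(\int_{B_r}\F(|\X u|)|\X u|^{2/(1-\tau)}\dx)^{1-\tau}$ obtained by pulling out $|\X u|^{4(1-\tau)}\le \mu(r)^{4(1-\tau)}$ and the $K_\eta^{2-\tau}$ factor from the $\eta$-derivatives. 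Balancing Young's constants yields exactly the $\mathcal{I}^{\tau}$ on the right-hand side of \eqref{eq:tech} (the asymmetric square comes from applying the trick twice to handle the $|\X T u|\cdot|\XX u|$ cross-term).

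The main obstacle I expect is precisely the bookkeeping of exponents in this Hölder split: the interpolation must be chosen so that, simultaneously, (i) the $|\XX u|^2$ factor is weighted by $\eta^{\beta+4}\tvr^{\beta+4}|\X u|^4$ to match the setting of Lemma \ref{lem:start}, (ii) the $|\X T u|^2$ factor can be absorbed into the left-hand side with the prescribed weights $\eta^{\tau(\beta+2)+4}\tvr^{\tau(\beta+4)}|\X u|^4$, and (iii) the leftover produces $\mu(r)^{4(1-\tau)}K_\eta^{2-\tau}\bigl(\int\F(|\X u|)|\X u|^{2/(1-\tau)}\bigr)^{1-\tau}$. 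Once the exponent matching is arranged, the remaining estimates are routine applications of Young's inequality of the same flavor as in Lemma \ref{lem:start} and Corollary \ref{cor:est0 XTu}.
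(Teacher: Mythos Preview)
Your test function and overall plan match the paper's, but the allocation of terms is substantially off, and the key interpolation mechanism is missing.

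The terms coming from differentiation of $\eta$, $\tvr$, and $|\X u|$ (call them $K_1,K_2,K_3$ as in the paper) do \emph{not} land in the $c_0$ term by a naive Young split. If you apply Young's inequality to, say,
\[
\int \eta^{\tau(\beta+2)+3}\tvr^{\tau(\beta+4)}\weight |\X u|^4|Tu||\X Tu||\X\eta|\,dx,
\]
the ``bad'' half carries $|\X\eta|^2$ and the wrong power of $\eta$, so it is not of the form $\int \eta^{\tau(\beta+2)+4}\tvr^{\tau(\beta+4)}\weight|\X u|^4(|Tu|^2+|\X u|^2)$. These terms must instead go into the $\mathcal I^\tau$ part. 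The paper does this by a first H\"older split producing $\tilde K^{1/2}\mathcal I^{1/2}$, where
\[
\tilde K=\int \eta^{(2\tau-1)(\beta+2)+6}\tvr^{(2\tau-1)(\beta+4)}\weight|\X u|^4|Tu|^2|\X Tu|^2\,dx,
\]
and the second factor matches one of the summands of $\mathcal I$ exactly (this is why $\mathcal I$ has that specific form). For $K_3$ the second factor is $\int\eta^{\beta+4}\tvr^{\beta+4}\weight|\X u|^2|\XX u|^2$, which is bounded by $\mathcal I$ \emph{via} Lemma~\ref{lem:start} with $\kappa=2$, $\beta\mapsto\beta+2$; so the $|\XX u|$ issue you flagged is not the crux.

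The genuinely crucial step you are missing is the \emph{second} H\"older interpolation on $\tilde K$:
\[
\tilde K\le M^{\frac{2\tau-1}{\tau}}H^{\frac{1-\tau}{\tau}},\qquad
H=\int \eta^{\frac{2\tau}{1-\tau}+4}\weight|\X u|^4|Tu|^{\frac{2\tau}{1-\tau}}|\X Tu|^2\,dx,
\]
where $M$ is the left-hand side of \eqref{eq:tech}. Here $H$ is a truncation-free quantity that is controlled by the a~priori estimates of Section~\ref{sec:apest} (Corollaries~\ref{lem:est Tu}, \ref{cor:est XTu}, \ref{cor:est0 XTu}), yielding precisely $H\le cK_\eta^{\frac{1}{1-\tau}+1}\mu(r)^4\int_{B_r}\weight|\X u|^{\frac{2}{1-\tau}}$. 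The commutator terms ($[X_j,T_k]u$ and $[X_i,T_k]X_ju$) are handled by the same two-stage split with $|Tu|^2$ or $|\X u|^2$ replacing $|\X Tu|^2$ in $\tilde K$, giving analogues $\bar K,\mathcal K,\mathcal K_0\le c(M+\bar M+M_0)^{\frac{2\tau-1}{\tau}}(H+H'+\bar H+H_0)^{\frac{1-\tau}{\tau}}$. After Young's inequality, the $M$ contribution is absorbed, while the $\bar M,M_0$ contributions (which contain $|Tu|^2,|\X u|^2$ instead of $|\X Tu|^2$) cannot be absorbed and become the $c_0$ term. Your single H\"older with exponents $1/\tau,1/(1-\tau)$ does not produce the factor $M^{(2\tau-1)/\tau}$ needed for this absorption, and without it the argument does not close.
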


\begin{proof}
Let us denote the left hand side of \eqref{eq:tech} by $M$, i.e. 
\begin{equation}\label{def:M}
 M=\ \int_\Omega \eta^{\tau(\beta+2)+4}\,\tvr^{\tau(\beta+4)} 
\weight |\X u|^4|\X Tu|^2\, dx,
\end{equation}
for any fixed $1/2<\tau<1$ and $\beta\geq 0$. Now, we recall \eqref{eq:Teq}, i.e. 
\begin{equation}\label{eq:Teq1}
\begin{aligned}
\sum_{i,j}\int_\Om D_jA_i(\X u)X_jT_ku X_i\varphi\dx &= \sum_{i,j}\int_\Om D_jA_i(\X u)[X_j,T_k]u X_i\varphi\dx \\
&\quad - \sum_{i,j}\int_\Om  \varphi D_jA_i(\X u) [X_i, T_k]X_j u\dx.
\end{aligned}
\end{equation}
We use $\varphi=\varphi_k = \eta^{\tau(\beta+2)+4}\,\tvr^{\tau(\beta+4)}|\X u|^4\, T_ku$ on \eqref{eq:Teq1} and take summation over $k\in\{1,\ldots,n\}$, 
 to obtain
\begin{equation}\label{est:Ki}
 \begin{aligned}
 \sum_{i,j,k}&\int_\Omega \eta^{\tau(\beta+2)+4}\,\tvr^{\tau(\beta+4)} 
|\X u|^4 D_jA_i(\X u)X_jT_ku\,X_iT_ku\, dx\\
&=-(\tau(\beta+2)+4)\sum_{i,j,k} \int_\Omega  \eta^{\tau(\beta+2)+3}\,\tvr^{\tau(\beta+4)} 
|\X u|^4 T_ku D_jA_i(\X u)X_jT_ku\,X_i\eta\dx\\
&\quad -\tau(\beta+4)\sum_{i,j,k}\int_\Omega \eta^{\tau(\beta+2)+4}
\,\tvr^{\tau(\beta+4)-1} 
|\X u|^4 T_ku D_jA_i(\X u)X_jT_ku\,X_i \tvr\dx\\
&\quad -4\sum_{i,j,k,l}\int_\Omega \eta^{\tau(\beta+2)+4}\,\tvr^{\tau(\beta+4)} 
|\X u|^2 X_lu T_ku D_jA_i(\X u)X_jT_ku\,X_iX_lu\dx\\
&\quad + \sum_{i,j,k}\int_\Om D_jA_i(\X u)[X_j,T_k]u X_i\varphi_k\dx \\
&\quad  - \sum_{i,j,k}\int_\Om  \varphi_k D_jA_i(\X u) [X_i, T_k]X_j u\dx\\
&= K_1 +K_2+ K_3+ K_4+K_5,
\end{aligned}
\end{equation}
where the integrals in the right hand side of \eqref{est:Ki} are denoted by
$K_1,K_2,K_3,K_4,K_5$ in order. To prove the lemma, 
we estimate both sides of \eqref{est:Ki} as follows. 

For the left hand side, 
we have by the structure condition \eqref{eq:pstr} that
\begin{equation}\label{est:K}
 \text{left of \eqref{est:Ki}} \geq \int_\Omega \eta^{\tau(\beta+2)+4}\,\tvr^{\tau(\beta+4)} 
\weight |\X u|^4|\X Tu|^2\, dx = M, 
\end{equation}
and for the right hand side of \eqref{est:Ki}, we estimate each item $K_i$, one by one. 
Let us denote 
\begin{equation}\label{def:tilde K}
\tilde K = \int_\Om \eta^{(2\tau-1)(\beta+2)+6} 
\,\tvr^{(2\tau-1)(\beta+4)} \weight |\X u|^4 |Tu|^2 |\X Tu|^2\dx,
\end{equation} 
which shall appear frequently in the following. 

First, we estimate $K_1$ by the structure condition \eqref{eq:pstr} and 
H\"older's inequality, to get
\begin{equation}\label{est:K1}
\begin{aligned}
\vert K_1\vert & \leq c(\beta+1)\int_\Omega \eta^{\tau(\beta+2)+3}\,\tvr^{\tau(\beta+4)} 
\weight |\X u|^4|Tu||\X Tu||\X\eta|\, dx\\
&\leq c(\beta+1)\Big(\int_\Om \eta^{(2\tau-1)(\beta+2)+6} 
\,\tvr^{(2\tau-1)(\beta+4)} \weight |\X u|^4 |Tu|^2 |\X Tu|^2\dx \Big)^\frac{1}{2}\\
&\qquad\qquad \times  \Big(\int_\Om \eta^{\beta+2} \tvr^{\beta+4} 
\weight |\X u|^4 |\X\eta|^2\dx\Big)^\frac{1}{2}\\ 
&= c(\beta+1) \tilde K^\frac{1}{2} 
\Big(\int_\Om \eta^{\beta+2} \tvr^{\beta+4} 
\weight |\X u|^4 |\X\eta|^2\dx\Big)^\frac{1}{2}.
\end{aligned}
\end{equation}
Second, we estimate $K_2$ also by the structure condition \eqref{eq:pstr} and 
H\"older's inequality,  
\begin{equation}\label{est:K2}
\begin{aligned}
 \vert K_2\vert & \leq c(\beta+1)\int_\Omega \eta^{\tau(\beta+2)+4}\,\tvr^{\tau(\beta+4)-1} 
\weight |\X u|^4|Tu||\X Tu||\X \tvr|dx\\
&\leq c(\beta+1)\Big(\int_\Om \eta^{(2\tau-1)(\beta+2)+6} 
\,\tvr^{(2\tau-1)(\beta+4)} \weight |\X u|^4 |Tu|^2 |\X Tu|^2\dx \Big)^\frac{1}{2}\\
&\qquad\qquad \times \Big(\int_\Om \eta^{\beta+4} \tvr^{\beta+2} 
\weight |\X u|^4 |\X \tvr|^2\dx\Big)^\frac{1}{2}\\
 &= c(\beta+1) \tilde K^\frac{1}{2} 
\Big(\int_\Om \eta^{\beta+4} \tvr^{\beta+2} 
\weight |\X u|^4 |\X \tvr|^2\dx\Big)^\frac{1}{2}.
\end{aligned}
\end{equation}
Similarly, we estimate $K_3$ by 
structure condition \eqref{eq:pstr} and 
H\"older's inequality. We have 
\begin{equation}\label{est:K3}
\begin{aligned}
\vert K_3\vert &\leq c\int_\Omega \eta^{\tau(\beta+2)+4}\,\tvr^{\tau(\beta+4)} 
\weight |\X u|^3|Tu||\X Tu||\X\X u|\, dx\\
&\leq c\,\Big(\int_\Om \eta^{(2\tau-1)(\beta+2)+6} 
\,\tvr^{(2\tau-1)(\beta+4)} \weight |\X u|^4 |Tu|^2 |\X Tu|^2\dx \Big)^\frac{1}{2}\\
&\qquad \times \Big(\int_\Om \eta^{\beta+4} \tvr^{\beta+4} 
\weight |\X u|^2 |\X\X u|^2\dx\Big)^\frac{1}{2}\\
&= c \tilde K^\frac{1}{2} 
\Big(\int_\Om \eta^{\beta+4} \tvr^{\beta+4} 
\weight |\X u|^2 |\X\X u|^2\dx\Big)^\frac{1}{2} \leq c\tilde K^\frac{1}{2}\mathcal{I}^\frac{1}{2},
\end{aligned}
\end{equation}
where the last inequality of the above follows from Lemma \ref{lem:start} with
\begin{equation*}
\begin{aligned}
\mathcal{I}=&  \,c (\beta+1)^2\int_\Omega \eta^{\beta+2}\big(\eta^2 + |\X \eta|^2+\eta| T\eta|\big)
\tvr^{\beta+4}
\weight |\X u|^4\, dx\\
& +  c(\beta+1)^2\int_\Omega \eta^{\beta+4} \tvr^{\beta+2}\weight |\X u|^4
| \X \tvr|^2\, dx\\
& +
c\int_\Omega\eta^{\beta+4} \tvr^{\beta+4}\weight |\X u|^2 | Tu|^2\, dx, 
\end{aligned}
\end{equation*}
which is a rearrangement of the right hand side of \eqref{eq:cvineq1} (with $\kappa=2$ and $\beta\mapsto \beta+2$). 

In fact,  we can combine \eqref{est:K1}, \eqref{est:K2} and \eqref{est:K3} to obtain 
\begin{equation}\label{eq:k123}
\vert K_1\vert+\vert K_2\vert+\vert K_3\vert \leq c \tilde K^\frac{1}{2} \mathcal{I}^\frac{1}{2}. 
\end{equation}
Then $\tilde K$ is estimated by H\"older's inequality as 
\begin{equation}\label{est:tilde K}
 \begin{aligned}
  \tilde K&= \int_\Om \eta^{(2\tau-1)(\beta+2)+6} 
\,\tvr^{(2\tau-1)(\beta+4)} \weight |\X u|^4 |Tu|^2 |\X Tu|^2\dx\\
&\leq  \Big(\int_\Omega\eta^{\tau(\beta+2)+4}\,\tvr^{\tau(\beta+4)} 
\weight |\X u|^4|\X Tu|^2\, dx\Big)^{\frac{2\tau-1}{\tau}}\\
&\quad \times\Big(\int_\Omega\eta^{\frac{2\tau}{1-\tau}+4}
\weight |\X u|^4 |Tu|^{\frac{2\tau}{1-\tau}}
|\X Tu|^2\dx\Big)^\frac{1-\tau}{\tau}\\
&= M^{\frac{2\tau-1}{\tau}} H^\frac{1-\tau}{\tau}, 
 \end{aligned}
\end{equation}
where $M$ is as in \eqref{def:M} and $H$ is the second integral on the right hand side of (\ref{est:tilde K}), i.e. 
\begin{equation}\label{def:H}
 H=\int_\Omega\eta^{\frac{2\tau}{1-\tau}+4}
\weight |\X u|^4 |Tu|^{\frac{2\tau}{1-\tau}}
|\X Tu|^2\dx, 
\end{equation}
which is estimated using Corollary \ref{cor:est XTu} and \eqref{trivial} with $q = 2/(1-\tau)$, as 
\begin{equation}\label{est:H}
\begin{aligned}
H\leq  c\mu(r)^4\int_\Omega \eta^{q+2} \weight | Tu|^{q-2}|\X Tu|^2\, dx\le c K_\eta^\frac{q+2}{2} \mu(r)^4\int_{B_r}\weight |\X u|^q \, dx 
\end{aligned}
\end{equation}
where $K_\eta =\| \eta\|_{L^\infty}^2+\| \X\eta\|_{L^\infty}^2+\|\eta
T\eta\|_{L^\infty}$ and $c=c(N,p,L,\tau)>0$. 

To estimate the rest, we note that
\begin{equation}\label{est:K4}
 \begin{aligned}
K_4&= \sum_{i,j,k}\int_\Omega \eta^{\tau(\beta+2)+4}\,\tvr^{\tau(\beta+4)} 
|\X u|^4 D_jA_i(\X u)[X_j,T_k]u\,X_iT_ku\, dx\\
&\ +(\tau(\beta+2)+4)\sum_{i,j,k} \int_\Omega  \eta^{\tau(\beta+2)+3}\,\tvr^{\tau(\beta+4)} 
|\X u|^4 T_ku D_jA_i(\X u)[X_j,T_k]u\,X_i\eta\dx\\
&\ +\tau(\beta+4)\sum_{i,j,k}\int_\Omega \eta^{\tau(\beta+2)+4}
\,\tvr^{\tau(\beta+4)-1} 
|\X u|^4 T_ku D_jA_i(\X u)[X_j,T_k]u\,X_i \tvr\dx\\
&\ +4\sum_{i,j,k,l}\int_\Omega \eta^{\tau(\beta+2)+4}\,\tvr^{\tau(\beta+4)} 
|\X u|^2 X_lu T_ku D_jA_i(\X u)[X_j,T_k]u\,X_iX_lu\dx\\
&=K_{4,0}+K_{4,1}+K_{4,2}+K_{4,3}.
\end{aligned}
\end{equation}
Then $K_{4,0}$ is estimated by \eqref{eq:pstr}, \eqref{eq:comm} and 
H\"older's inequality as 
\begin{equation*}
\begin{aligned}
\vert K_{4,0}\vert & \leq c(\beta+1)\int_\Omega \eta^{\tau(\beta+2)+4}\,\tvr^{\tau(\beta+4)} 
\weight |\X u|^4\big(|Tu|+|\X u|\big)|\X Tu|\, dx\\
&\leq c(\beta+1)\Big(\int_\Om \eta^{(2\tau-1)(\beta+2)+6} 
\,\tvr^{(2\tau-1)(\beta+4)} \weight |\X u|^4 \big(|Tu|^2+|\X u|^2\big) |\X Tu|^2\dx \Big)^\frac{1}{2}\\
&\qquad\qquad \times  \Big(\int_\Om \eta^{\beta+4} \tvr^{\beta+4} 
\weight |\X u|^4 \dx\Big)^\frac{1}{2}\\ 
&= c(\beta+1) \bar K^\frac{1}{2} 
\Big(\int_\Om \eta^{\beta+4} \tvr^{\beta+4} 
\weight |\X u|^4 \dx\Big)^\frac{1}{2} \leq c\bar K^\frac{1}{2} \mathcal I^\frac{1}{2}, 
\end{aligned}
\end{equation*}
where $\bar K$ is the term similar to but larger than $\tilde K$ and is estimated similarly as 
\begin{equation}\label{est:bar K}
 \begin{aligned}
  \bar K&= \int_\Om \eta^{(2\tau-1)(\beta+2)+6} 
\,\tvr^{(2\tau-1)(\beta+4)} \weight |\X u|^4 \big(|Tu|^2+|\X u|^2\big) |\X Tu|^2\dx\\
&\leq  c\Big(\int_\Omega\eta^{\tau(\beta+2)+4}\tvr^{\tau(\beta+4)} 
\weight |\X u|^4|\X Tu|^2\, dx\Big)^{\frac{2\tau-1}{\tau}}\\
&\quad \times \Big(\int_\Omega\eta^{\frac{2\tau}{1-\tau}+4}
\weight |\X u|^4 \big(|Tu|^{\frac{2\tau}{1-\tau}}+|\X u|^{\frac{2\tau}{1-\tau}}\big)
|\X Tu|^2\dx\Big)^\frac{1-\tau}{\tau}\\
&= cM^{\frac{2\tau-1}{\tau}} (H+H')^\frac{1-\tau}{\tau}, 
 \end{aligned}
\end{equation}
for some $c=c(N,p,L,\tau)>0$, $H$ is in \eqref{def:H} and 
\begin{equation}\label{def:H'}
 H'=\int_\Omega\eta^{\frac{2\tau}{1-\tau}+4}
\weight |\X u|^{4+\frac{2\tau}{1-\tau}} |\X Tu|^2\dx,
\end{equation}
which is estimated using Corollary \ref{cor:est0 XTu} and \eqref{trivial} with $q = 2/(1-\tau)$, as 
\begin{equation}\label{est:H'}
\begin{aligned}
H'&\leq  c\mu(r)^4\int_\Omega \eta^{q+2} \weight |\X u|^{q-2}|\X Tu|^2\, dx
\le c K_\eta^\frac{q+2}{2} \mu(r)^4\int_{B_r}\weight |\X u|^q \, dx 
\end{aligned}
\end{equation}
where $K_\eta =\| \eta\|_{L^\infty}^2+\| \X\eta\|_{L^\infty}^2+\|\eta
T\eta\|_{L^\infty}$ and $c=c(N,p,L,\tau)>0$. 

Since we have $|[X_j, T_k]u|^2 \leq c\big(|Tu|^2+|\X u|^2\big)$ from \eqref{eq:comm}, 
it is not hard to see that the estimates of $K_{4,1},K_{4,2},K_{4,3}$ can be performed by H\"older's inequality in exactly the same way as that of $K_1,K_2,K_3$, i.e. \eqref{est:K1},\eqref{est:K2},\eqref{est:K3} respectively, where $|\X T u|^2$ being replaced by $\big(|Tu|^2+|\X u|^2\big)$. In other words, we have 
\begin{equation}\label{eq:k123}
|K_{4,1}| +|K_{4,2}| +|K_{4,3}| \leq c \mathcal{K}^\frac{1}{2} \mathcal{I}^\frac{1}{2}, 
\end{equation}
 where $\mathcal{I}$ is as in \eqref{def:I} and 
\begin{equation}\label{def:cal K}
\mathcal K = \int_\Om \eta^{(2\tau-1)(\beta+2)+6} 
\,\tvr^{(2\tau-1)(\beta+4)} \weight |\X u|^4 |Tu|^2 \big(|Tu|^2+|\X u|^2\big) \dx, 
\end{equation} 
which can be estimated similarly as above to obtain
\begin{equation}\label{est:cal K}
 \begin{aligned}
  \mathcal K
&\leq  c\Big(\int_\Omega\eta^{\tau(\beta+2)+4}\tvr^{\tau(\beta+4)} 
\weight |\X u|^4| Tu|^2\, dx\Big)^{\frac{2\tau-1}{\tau}}\\
&\quad \times \Big(\int_\Omega\eta^{\frac{2\tau}{1-\tau}+4}
\weight |\X u|^4 \big(|Tu|^{\frac{2\tau}{1-\tau}}+|\X u|^{\frac{2\tau}{1-\tau}}\big)
| Tu|^2\dx\Big)^\frac{1-\tau}{\tau}\\
&= c\bar M^{\frac{2\tau-1}{\tau}} \bar H^\frac{1-\tau}{\tau}, 
 \end{aligned}
\end{equation}
 where $\bar H$ is the term analogous to $H$ and $H'$ as in \eqref{def:H} and \eqref{def:H'} which can be estimated 
similarly by \eqref{eq:est Tu} and 
\begin{equation}\label{barM}
\bar M = \int_\Omega\eta^{\tau(\beta+2)+4}\,\tvr^{\tau(\beta+4)} 
\weight |\X u|^4| Tu|^2\, dx.  
\end{equation}
Finally, we recall that 
$$ K_5= - \sum_{i,j,k}\int_\Om  \eta^{\tau(\beta+2)+4}\,\tvr^{\tau(\beta+4)}|\X u|^4\, T_ku D_jA_i(\X u) [X_i, T_k]X_j u\dx,$$ 
which is estimated by \eqref{eq:pstr} and \eqref{eq:XTX} as 
\begin{equation}\label{est:K5}
 \begin{aligned}
 |K_5| &\leq c\int_\Om  \eta^{\tau(\beta+2)+4}\,\tvr^{\tau(\beta+4)}\weight |\X u|^4 |Tu|  
\big(|\XX u|+|\X Tu| + |\X u| + |Tu|\big)\dx\\
&\leq c (\mathcal{K}_0^\frac{1}{2}+\tilde K^\frac{1}{2}+\bar K^\frac{1}{2}+\mathcal{K}^\frac{1}{2}) \mathcal{I}^\frac{1}{2}, 
 \end{aligned}
\end{equation}
where the latter inequality is obtained from H\"older's inequality and the previous estimates, where $\tilde K, \bar K$ and $\mathcal K$ are as in \eqref{est:bar K} and \eqref{def:cal K}, $\mathcal{I}$ is as in \eqref{def:I} and 
the first term of \eqref{est:K5} estimated similarly as  
\begin{equation}\label{def:tilde K0}
\mathcal K_0 = \int_\Om \eta^{(2\tau-1)(\beta+2)+6} 
\,\tvr^{(2\tau-1)(\beta+4)} \weight |\X u|^8 \dx \leq  c M_0^{\frac{2\tau-1}{\tau}} H_0^\frac{1-\tau}{\tau},
\end{equation} 
where $M_0$ and $H_0$ are the analogous terms containing only $\X u$ in place of $Tu$ or $\X Tu$. 
Combining all the estimates of the above, we finally have 
\begin{equation}
\begin{aligned}
M &\leq c \big(\tilde K^\frac{1}{2}+\bar K^\frac{1}{2}+\mathcal{K}^\frac{1}{2} + \mathcal{K}_0^\frac{1}{2}\big) 
\mathcal{I}^\frac{1}{2}\\
 &\leq c\Big(M^{\frac{2\tau-1}{2\tau}}+\bar M^{\frac{2\tau-1}{2\tau}}+M_0^{\frac{2\tau-1}{2\tau}}\Big)
\big(H+H'+\bar H+H_0\big)^\frac{1-\tau}{2\tau}\mathcal{I}^\frac{1}{2}
\end{aligned}
\end{equation}
which, by Young's inequality, leads to 
$$ M \leq c_0(\bar M+M_0) + c\big(H+H'+\bar H+H_0\big)^{1-\tau}\mathcal I^\tau, $$
which, with the estimates \eqref{est:H} and \eqref{est:H'}, implies \eqref{eq:tech} 
and the proof is finished.
\end{proof}
\begin{Rem}\label{rem:groupcase2}
For the special case when $X_1,\ldots,X_m$ are left invariant with respect to a step $2$ Carnot Group, since we have $ [X_i,T_k]=0$, hence it corresponds to the case of $c_0=0$ for \eqref{eq:tech}. However, it does not make any difference because of the following corollary. 
\end{Rem}

\begin{Cor}\label{cor:tech}
Let $B_r \subset \Om$ and $\eta\in C^\infty_0(B_r)$ be fixed.  
Then, for any $\tau \in (\frac{1}{2},1)$ and $\beta \geq 0$, 
\begin{equation*}
 \begin{aligned}
  \int_\Omega \eta^{\tau(\beta+2)+4}\tvr^{\tau(\beta+4)} 
\weight |\X u|^4|\X Tu|^2\, dx
\leq c\mu(r)^{4(1-\tau)}K_\eta^{2-\tau}\Big(\int_{B_r} \weight |\X u|^\frac{2}{1-\tau}\dx\Big)^{1-\tau}\mathcal{I}^\tau
 \end{aligned}
\end{equation*}
where $\mathcal{I}$ is as in \eqref{def:I}, $K_\eta =\| \eta\|_{L^\infty}^2+\| \X\eta\|_{L^\infty}^2+\|\eta
T\eta\|_{L^\infty}$ and
$c= c(N,p,L,\tau)>0$.
\end{Cor}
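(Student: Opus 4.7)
The plan is to start from Lemma \ref{lem:tech} and eliminate its first right-hand-side term
$$\mathcal{N} := \int_\Omega \eta^{\tau(\beta+2)+4}\tvr^{\tau(\beta+4)}\weight|\X u|^4\big(|Tu|^2+|\X u|^2\big)\dx,$$
by showing that $\mathcal{N}$ itself is dominated by the second term on the right of \eqref{eq:tech}. I will split $\mathcal N=\mathcal N_a+\mathcal N_b$ according to the $|\X u|^2$ and $|Tu|^2$ summands and apply H\"older's inequality to each with conjugate exponents $1/\tau$ and $1/(1-\tau)$. The decomposition $f=E^\tau G^{1-\tau}$ is chosen so that $E$ is pointwise dominated by one of the three integrands of $\mathcal I$, giving $(\int E)^\tau \le c\,\mathcal I^\tau$, while the remaining factor $(\int G)^{1-\tau}$ is shown to produce the target quantity $\mu(r)^{4(1-\tau)}K_\eta^{2-\tau}\big(\int_{B_r}\weight|\X u|^{2/(1-\tau)}\dx\big)^{1-\tau}$.

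For the $|\X u|^2$-piece, I select $E=\eta^{\beta+4}\tvr^{\beta+4}\weight|\X u|^4$, which is dominated by the first integrand of $\mathcal I$ (since $\eta^{\beta+4}\le \eta^{\beta+2}\cdot\eta^2$ for $0\le\eta\le 1$), forcing $G=\eta^{(4-2\tau)/(1-\tau)}\weight|\X u|^{(6-4\tau)/(1-\tau)}$. Using the identity $\frac{6-4\tau}{1-\tau}=\frac{2}{1-\tau}+4$ together with the bound $|\X u|\le \mu(r)$ in $B_r$ gives $|\X u|^{(6-4\tau)/(1-\tau)}\le \mu(r)^4|\X u|^{2/(1-\tau)}$, and pulling $\|\eta\|_\infty^{4-2\tau}\le K_\eta^{2-\tau}$ outside yields exactly $(\int G)^{1-\tau}\le K_\eta^{2-\tau}\mu(r)^{4(1-\tau)}\big(\int_{B_r}\weight|\X u|^{2/(1-\tau)}\dx\big)^{1-\tau}$, as required.

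For the $|Tu|^2$-piece, the natural choice is $E=\eta^{\beta+4}\tvr^{\beta+4}\weight|\X u|^2|Tu|^2$, which matches the third integrand of $\mathcal I$, leaving $G=\eta^{(4-2\tau)/(1-\tau)}\weight|\X u|^{(4-2\tau)/(1-\tau)}|Tu|^2$. The genuinely new step is to remove the surviving $|Tu|^2$ from $\int G$: I invoke \eqref{eq:ineqT} to pass to $|\X\X u|^2$, then apply Lemma \ref{lem:est Xu} with the rescaled cutoff $\tilde\eta=\eta^{(2-\tau)/(1-\tau)}$ (whose $K_{\tilde\eta}$ is controlled by $c(\tau)K_\eta$ via the chain rule, using $0\le\eta\le 1$) and parameter $\beta'=\frac{4-2\tau}{1-\tau}\ge 6$. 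Since $\beta'+2=\frac{2}{1-\tau}+4$, a further application of $|\X u|\le\mu(r)$ trades $|\X u|^{\beta'+2}$ for $\mu(r)^4|\X u|^{2/(1-\tau)}$, producing $\int G\le c(\tau)K_\eta\mu(r)^4\int_{B_r}\weight|\X u|^{2/(1-\tau)}\dx$, and the $(1-\tau)$-power then delivers the required form. Combining the two estimates with Lemma \ref{lem:tech} closes the proof. The main obstacle is the careful bookkeeping of the many exponents and the verification that the rescaled cutoff $\tilde\eta$ still satisfies $K_{\tilde\eta}\le c(\tau)K_\eta$; no further structural idea beyond Lemma \ref{lem:tech}, inequality \eqref{eq:ineqT}, Lemma \ref{lem:est Xu}, and two applications of H\"older's inequality is required.
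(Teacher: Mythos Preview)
Your approach coincides with the paper's: both apply H\"older's inequality with exponents $1/\tau$ and $1/(1-\tau)$ to the error term of Lemma~\ref{lem:tech}, matching the $\tau$-factor to integrands of $\mathcal I$, and then control the $(1-\tau)$-factor via the integral estimates of Section~\ref{sec:apest}. The paper handles both summands of $\mathcal N$ in one stroke (its $E$ is $\eta^{\beta+4}\tvr^{\beta+4}\weight|\X u|^2(|Tu|^2+|\X u|^2)$, which is bounded by the sum of the first and third integrands of $\mathcal I$) and appeals to Corollary~\ref{lem:est Tu} for the residual $(1-\tau)$-factor, whereas you split $\mathcal N=\mathcal N_a+\mathcal N_b$ and control the $|Tu|^2$ part of the $(1-\tau)$-factor via \eqref{eq:ineqT} and Lemma~\ref{lem:est Xu}; both routes are valid and lead to the same integral.

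One small correction: your bound $K_{\tilde\eta}\le c(\tau)K_\eta$ relies on $\|\eta\|_\infty\le 1$, which is not assumed in the statement, and even granting it you obtain only $K_\eta^{1-\tau}$ (not $K_\eta^{2-\tau}$) in the $\mathcal N_b$ bound. The fix is immediate and removes the extra hypothesis: with $s=(2-\tau)/(1-\tau)$ and using only $\|\eta\|_\infty^2\le K_\eta$, the chain rule gives
\[
K_{\tilde\eta}\le c(\tau)\,\|\eta\|_\infty^{2(s-1)}K_\eta\le c(\tau)\,K_\eta^{\,s}=c(\tau)\,K_\eta^{(2-\tau)/(1-\tau)},
\]
and hence $K_{\tilde\eta}^{\,1-\tau}\le c(\tau)\,K_\eta^{2-\tau}$, which matches the statement exactly. (Also note that $\eta^{\beta+4}=\eta^{\beta+2}\cdot\eta^2\le\eta^{\beta+2}(\eta^2+|\X\eta|^2+\eta|T\eta|)$ holds for all $\eta\ge 0$, so the first use of ``$0\le\eta\le 1$'' is unnecessary as well.)
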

\begin{proof}
The first term in the right hand side of \eqref{eq:tech} can be estimated by H\"older's inequality as 
\begin{equation}\label{eq:tech'}
 \begin{aligned}
  \int_\Omega &\eta^{\tau(\beta+2)+4}\tvr^{\tau(\beta+4)} 
\weight |\X u|^4 \big(|Tu|^2+|\X u|^2\big)\, dx \\
&\leq \Big(\int_\Omega\eta^{\beta+4} \tvr^{\beta+4}\weight |\X u|^2 \big(|Tu|^2+|\X u|^2\big)\, dx\Big)^\tau \\
&\ \times \Big(\int_\Om \eta^{2+\frac{2}{1-\tau}} \weight |\X u|^{2+\frac{2}{1-\tau}}\big(|Tu|^2+|\X u|^2\big)\dx \Big)^{1-\tau} \leq c\, \mathcal I^\tau (\bar H+H_0)^{1-\tau},
 \end{aligned}
\end{equation}
where $\mathcal{I}$ is as in \eqref{def:I} and $\bar H, H_0$ are as in the proof of Lemma \ref{lem:tech}, which can be estimated easily by \eqref{eq:est Tu} as earlier. This concludes the proof. 
\end{proof}

\subsection{The main Lemma}
We are ready to  prove the main lemma for the proof of the $C^{1,\alpha}$ regularity. This is a Caccioppoli type estimate of the truncation that is devoid of the weight $\weight = (\delta+|\X u|^2)^\frac{p-2}{2}$ which is similar to estimates for equations with uniform ellipticity. 

Towards this, we fix $B_r\subset \Om$ 
and use a standard test function $\eta\in C^\infty_0(B_r)$ that satisfies 
\begin{equation}\label{eq:propeta}
\begin{aligned}
&0\leq \eta \leq 1\quad\text{in}\ B_r,\ \eta \equiv 1 \quad\text{in}\ B_{r/2},\\
&|\X \eta| \leq 4/r \quad\text{and}\quad |\XX \eta | \leq 16N/r^2\quad\text{in} \ B_r,
\end{aligned}
\end{equation}
and hence from \eqref{eq:ineqT}, $K_\eta =\| \eta\|_{L^\infty}^2+\| \X\eta\|_{L^\infty}^2+\|\eta
T\eta\|_{L^\infty}\leq c/r^2$ for large enough $c=c(N)>0$. 

The following is the main lemma of this section, which has been proved previously in \cite{Muk-Zhong} for the special case when $X_1,\ldots,X_m$ are left invariant with respect to the Heisenberg Group. In the Euclidean case, similar estimates have been proved earlier, see \cite{Tolk, Lieb--bound}, etc. 
\begin{Lem}\label{lem:main}
Let $\tvr$ be the truncation as in \eqref{def:v} and $\eta\in C^\infty_0(B_r)$ satisfy \eqref{eq:propeta}. 
Then we have the inequality 
 \begin{equation}\label{eq:mainest}
  \int_{B_r} \eta^{\beta+4}\tvr^{\beta+2}|\X \tvr|^2\, dx \leq 
\frac{c}{r^2}(\beta+1)^2\mu(r)^4 | B_r|^{1-1/\gamma}\Big(\int_{B_r}\eta^{\gamma\beta}\tvr^{\gamma\beta}\, dx\Big)^{1/\gamma},
\end{equation}
for any $\beta\ge 0$ and $\gamma>1$, where $c=c(N,p, L,\gamma)>0$ 
is a constant.
\end{Lem}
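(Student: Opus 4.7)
The plan is to test the differentiated weak formulation \eqref{eq:yeqw2} with $Y=X_l$ against the test function $\varphi=\eta^{\beta+4}\tvr^{\beta+3}$. This choice is engineered so that the piece of $X_i\varphi$ involving $X_i\tvr$ produces, on the support $E$ of $\X\tvr$, a coercive bilinear form: using the pointwise identities $X_i\tvr=-X_iX_lu$ and $X_jX_lu=-X_j\tvr$ valid on $E$ (see \eqref{vis0}), one finds
\[
(\beta+3)\sum_{i,j}\int \eta^{\beta+4}\tvr^{\beta+2}X_i\tvr\,D_jA_i(\X u)\,X_jX_lu\,dx
= -(\beta+3)\int \eta^{\beta+4}\tvr^{\beta+2}\langle DA(\X u)\X\tvr,\X\tvr\rangle\,dx.
\]
Since the structure condition \eqref{eq:pstr} together with the comparability $\F(|\X u|)\ge c\mu(r)^{p-2}$ on $E$ (which follows from \eqref{eq:comble1} applied to the formula \eqref{eq:fp} for $\F$) gives $\langle DA(\X u)\X\tvr,\X\tvr\rangle\ge c\mu(r)^{p-2}|\X\tvr|^2$, rearranging \eqref{eq:yeqw2} yields
\[
c(\beta+3)\mu(r)^{p-2}\int \eta^{\beta+4}\tvr^{\beta+2}|\X\tvr|^2\dx
\;\le\;|A_1|+|R_1|+|R_2|,
\]
where $A_1=(\beta+4)\sum_{i,j}\int \eta^{\beta+3}\tvr^{\beta+3}X_i\eta\,D_jA_i X_jX_lu\,dx$ is the boundary contribution, and $R_1=\sum_{i,j}\int D_jA_i[X_j,X_l]u\,X_i\varphi\,dx$, $R_2=\sum_i\int A_i(\X u)[X_i,X_l]\varphi\,dx$ are the two right-hand side terms in \eqref{eq:yeqw2}.

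Each of $|A_1|,|R_1|,|R_2|$ is then estimated separately. For $A_1$, Cauchy--Schwarz isolates the factor $|\XX u|$, after which Lemma \ref{lem:start} (with $\kappa=2$ and the present $\beta$) bounds $\int \eta^{\beta+2}\tvr^{\beta+2}\weight|\X u|^2|\XX u|^2$; in the resulting three right-hand side terms the pointwise bounds $\weight|\X u|^q\le c\mu(r)^{p-2+q}$ from \eqref{trivial}, $\tvr\le \mu(r)/8$, and $|\X\eta|^2+|\eta T\eta|\le c/r^2$ reduce everything to the shape $c(\beta+1)^2\mu(r)^{p+4}/r^2\,\int \eta^\beta\tvr^\beta$, plus pieces containing $|\X\tvr|^2$ or $|Tu|^2$ that are handled below. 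For $R_1$, the commutator bound $|[X_j,X_l]u|\le c(|\X u|+|Tu|)$ from \eqref{eq:comm} combined with Young's inequality gives the same reduction, with any $|Tu|^q$ pieces dominated via H\"older's inequality and Corollary \ref{lem:est Tu}. Throughout, whenever an integral of the form $\int \eta^{\beta+4}\tvr^{\beta+2}|\X\tvr|^2$ reappears on the right-hand side, it is absorbed into the left-hand side through Young's inequality with a small parameter.

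The main obstacle lies in $R_2$: expanding $[X_i,X_l]\varphi$ via \eqref{eq:comm} produces $X_m\varphi$-type pieces (handled exactly as $A_1$) and $T_k\varphi$-type pieces which, via $T_k\tvr=-T_kX_lu$ on $E$ and the commutation identity $T_kX_lu=X_lT_ku+[T_k,X_l]u$, leave an integral containing $|\X Tu|$. This quantity enjoys no pointwise bound and must be controlled through the integral estimate of Corollary \ref{cor:tech}, whose exponent structure $\eta^{\tau(\beta_0+2)+4}\tvr^{\tau(\beta_0+4)}$ matches the target $\eta^{\beta+4}\tvr^{\beta+2}$ only in the limit $\tau\to 1$. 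The resolution is to choose $\tau<1$ slightly, set $\beta_0=\beta/\tau-2$, accept the mismatch $\tvr^{\beta+2\tau}$ versus $\tvr^{\beta+2}$ and absorb the resulting discrepancy of $\mu(r)^{2(1-\tau)}$ into the prefactor $\mu(r)^{4(1-\tau)}K_\eta^{2-\tau}$ supplied by the Corollary; the $\mathcal{I}^\tau$ factor there then produces exactly the required powers of $(\beta+1)$, $\mu(r)$ and $1/r$.

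Finally, after collecting all the estimates, dividing through by $c(\beta+3)\mu(r)^{p-2}$, and applying H\"older's inequality with exponent $\gamma$,
\[
\int_{B_r}\eta^\beta\tvr^\beta\,dx \;\le\; |B_r|^{1-1/\gamma}\Big(\int_{B_r}\eta^{\gamma\beta}\tvr^{\gamma\beta}\,dx\Big)^{1/\gamma},
\]
converts the collected bound into the stated form \eqref{eq:mainest}.
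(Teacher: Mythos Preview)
Your overall strategy—testing \eqref{eq:yeqw2} with $Y=X_l$ against $\varphi=\eta^{\beta+4}\tvr^{\beta+3}$, extracting coercivity on the set $E$ via \eqref{vis0} and \eqref{eq:comble1}, and finishing with H\"older—is precisely the paper's. But two of your reductions do not go through as written.

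For $A_1$, the paper does not isolate $|\XX u|$ by Cauchy--Schwarz. Instead it recognizes $\sum_jD_jA_i(\X u)X_jX_lu=X_l\big(A_i(\X u)\big)$ and integrates by parts in $X_l$; this eliminates the second-order factor altogether and leaves only $\weight|\X u|$ against derivatives of $\eta^{\beta+3}\tvr^{\beta+3}X_i\eta$, each term handled directly by \eqref{trivial} and Young. Your proposed route via Lemma~\ref{lem:start} with $\kappa=2$ would require a Cauchy--Schwarz split whose complementary factor carries $|\X u|^{-2}$ (to balance the $|\X u|^2$ in the lemma's left-hand side), and this is uncontrolled off $E$. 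Even with $\kappa=0$, the $|\X\tvr|^2$ term on the right of Lemma~\ref{lem:start} carries $\tvr^{\beta}$ rather than $\tvr^{\beta+2}$, and since $\tvr\le\mu(r)/8$ the inequality $\tvr^\beta\ge(8/\mu(r))^2\tvr^{\beta+2}$ goes the wrong way for absorption.

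More seriously, your handling of the $|\X Tu|$ piece in $R_2$ has a structural gap. You propose to match the left side of Corollary~\ref{cor:tech} to $\eta^{\beta+4}\tvr^{\beta+2}$ by taking $\beta_0=\beta/\tau-2$ and absorbing the $\tvr$-mismatch via $\mu(r)^{2(1-\tau)}$. But then the $\mathcal I^\tau$ on the right of the corollary is $\mathcal I$ of \eqref{def:I} computed with $\beta_0$, and its absorbable $|\X\tvr|^2$ piece carries $\eta^{\beta_0+4}=\eta^{\beta/\tau+2}$. For $\beta<2\tau/(1-\tau)$ this exponent is below $\beta+4$, hence $\eta^{\beta_0+4}\ge\eta^{\beta+4}$ and the term cannot be absorbed into the coercive left-hand side. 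The paper's remedy is different: starting from the raw integral $\int_E\eta^{\beta+4}\tvr^{\beta+2}\weight|\X u|\,|\X Tu|\,dx$, it applies H\"older with exponents chosen so that the $|\X Tu|^2$ factor is \emph{exactly}
\[
M=\int_\Omega\eta^{(2-\gamma)(\beta+2)+4}\tvr^{(2-\gamma)(\beta+4)}\weight|\X u|^4|\X Tu|^2\,dx,
\]
which is the left side of Corollary~\ref{cor:tech} with $\tau=2-\gamma$ and the \emph{same} $\beta$. The complementary H\"older factor lives on $E$ and is controlled by \eqref{eq:comble1}. Because $\beta$ is unchanged, the $\mathcal I$ from the corollary already has the correct $\eta^{\beta+4}$, $\tvr^{\beta+2}$, $\tvr^{\beta+4}$ exponents, and each of its three pieces is estimated directly (by \eqref{trivial}, by the coercive term, and by Corollary~\ref{lem:est Tu}). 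The restriction $1<\gamma<3/2$ makes $\tau=2-\gamma\in(1/2,1)$; general $\gamma>1$ then follows by a further H\"older at the end.
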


\begin{proof}
We shall assume $1<\gamma<3/2$ without loss of generality, since \eqref{eq:mainest} can be extended to the full range by applying H\"older's inequality to the right hand side. 

Recall that $\tvr=\min\big(\mu(r)/8\,,\,\max\,(\mu(r)/4-X_lu,0)\big)$ as in \eqref{def:v}, for some fixed $l\in \{1,\ldots,m\}$. 
For this $l$, we test \eqref{eq:yeqwXl} with the function $\varphi=\eta^{\beta+4}\tvr^{\beta+3}$ for $\beta\geq 0$ and $\eta\in C^\infty_0(B_r)$ as given, which is allowed since it follows from \eqref{eq:doman reg} and \eqref{vis0} that
$\X \tvr\in L^2_\loc(\Om;\R^{m})$ and $ T\tvr\in L^2_\loc(\Om; \R^n)$. Thus, we obtain 
\begin{equation}\label{lemest2}
\begin{aligned}
-(\beta+3)\sum_{i,j}&\int_\Omega \eta^{\beta+4}\tvr^{\beta+2} 
D_jA_i(\X u)X_jX_lu X_i\tvr\, dx\\
& = (\beta+4)\sum_{i,j}\int_\Omega \eta^{\beta+3}\tvr^{\beta+3} 
D_jA_i(\X u)X_jX_lu X_i\eta\, dx\\
&\ \ - (\beta+4)\sum_{i,j}\int_\Omega \eta^{\beta+3}\tvr^{\beta+3} 
D_jA_i(\X u)[X_j,X_l]u X_i\eta\, dx\\
&\ \ -(\beta+3)\sum_{i,j} \int_\Omega \eta^{\beta+4} \tvr^{\beta+2} 
D_jA_i(\X u)[X_j,X_l]u X_i\tvr \dx\\
&\ \ -\sum_{i}  \int_\Om A_i(\X u) [X_i, X_l](\eta^{\beta+4}\tvr^{\beta+3})\dx. 
\end{aligned}
\end{equation}
We can combine the first two terms of the right hand side and rewrite \eqref{lemest2} as 
\begin{equation}\label{lemest3}
\begin{aligned}
-(\beta+3)\sum_{i,j}&\int_\Omega \eta^{\beta+4}\tvr^{\beta+2} 
D_jA_i(\X u)X_jX_lu X_i\tvr\, dx\\
& = (\beta+4)\sum_{i,j}\int_\Omega \eta^{\beta+3}\tvr^{\beta+3} 
D_jA_i(\X u)X_lX_ju X_i\eta\, dx\\
&\ \ -(\beta+3)\sum_{i,j} \int_\Omega \eta^{\beta+4} \tvr^{\beta+2} 
D_jA_i(\X u)[X_j,X_l]u X_i\tvr \dx\\
&\ \ -\sum_{i}  \int_\Om A_i(\X u) [X_i, X_l](\eta^{\beta+4}\tvr^{\beta+3})\dx\\
&= I_1+I_2+I_3,
\end{aligned}
\end{equation}
where we denote the terms in the right hand side of 
\eqref{lemest3} by $I_1,I_2,I_3$, respectively and 
we estimate both sides 
of \eqref{lemest3} in the following. 

For the left hand side, from \eqref{vis0}, the structure condition \eqref{eq:pstr} and \eqref{eq:comble1}, we have 
\begin{equation}\label{est:lemleft}
\begin{aligned}
\text{left of } \eqref{lemest3} & \ge (\beta+3)\int_E \eta^{\beta+4}
\tvr^{\beta+2}\weight |\X \tvr|^2\, dx\\
& \ge c_0(\beta+1)\F(\mu(r))\int_{B_r}\eta^{\beta+4}\tvr^{\beta+2}|\X \tvr|^2\, dx,
\end{aligned}
\end{equation}
for some constant $c_0=c_0(N,p,L)>0$, where $E$ as in \eqref{eq:setE}. 
For the right hand side of (\ref{lemest3}), we claim that
each item $I_1,I_2,I_3$ satisfies  
\begin{equation}\label{lem:claim}
\begin{aligned}
| I_k| \le & \frac{c_0}{6}(\beta+1)\F(\mu(r))\int_{B_r}\eta^{\beta+4}\tvr^{\beta+2}|\X \tvr|^2\, dx\\
&\ +\frac{c}{r^2}(\beta+1)^3
  \F(\mu(r))\mu(r)^4 | B_r|^{1-1/\gamma}\Big(\int_{B_r}\eta^{\gamma\beta}\tvr^{\gamma\beta}\, dx\Big)^{1/\gamma},
\end{aligned}
\end{equation}
where $k=1,2,3$, $1<\gamma<3/2$ and  $c=c(N,p,L,\gamma)>0$. Then \eqref{eq:mainest} follows from the estimate \eqref{est:lemleft} for the
left hand side of \eqref{lemest3}) and the above claim
\eqref{lem:claim}) for each item in the right, thereby completing the proof. 
Thus, we are only left with 
proving the claim \eqref{lem:claim}. 
To this end, we estimate each $I_k$ one by one, in the rest of the proof. 

First, from chain rule, notice that 
\begin{align*}
I_1&= (\beta+4)\sum_{i}\int_\Omega \eta^{\beta+3}\tvr^{\beta+3} 
X_l\big(A_i(\X u)\big) X_i\eta\, dx\\
&=- (\beta+4)\sum_{i}\int_\Omega A_i(\X u) X_l\big( \eta^{\beta+3}\tvr^{\beta+3} X_i\eta\big)\dx,
\end{align*}
where the latter equality is due to integral by parts. Then by structure condition \eqref{eq:pstr}, 
\begin{equation}\label{lemest4}
\begin{aligned}
| I_1|\le &\, c(\beta+1)^2\int_\Omega \eta^{\beta+2} \tvr^{\beta+3}\weight |\X u|\big(|\X\eta|^2+\eta|\X\X\eta|\big)\, dx\\
&+ c(\beta+1)^2\int_\Omega \eta^{\beta+3}\tvr^{\beta+2}\weight |\X u||\X \tvr\|\X\eta|\, dx\\
\le & \, \frac{c}{r^2}(\beta+1)^2\F(\mu(r))\mu(r)^4\int_{B_r} \eta^\beta
\tvr^\beta\, dx\\
&\, +\frac{c}{r}(\beta+1)^2\F(\mu(r))
\mu(r)^2\int_{B_r}\eta^{\beta+2}\tvr^{\beta+1}|\X \tvr|\, dx,
\end{aligned}
\end{equation}
where $c=c(N,p,L)>0$. For the latter inequality of \eqref{lemest4}, we have used \eqref{trivial}. 
Now we apply Young's inequality to the last term of  
\eqref{lemest4} to end up with 
\begin{equation}\label{lemest:I1}
\begin{aligned}
| I_1|\le &\, \frac{c_0}{6} (\beta+1)\F(\mu(r))
\int_{B_r}\eta^{\beta+4}\tvr^{\beta+2}|\X \tvr|^2\, dx\\
&\, +\frac{c}{r^2}(\beta+1)^3\F(\mu(r))\mu(r)^4\int_{B_r}\eta^\beta \tvr^\beta\, dx,
\end{aligned}
\end{equation}
where $c=c(N,p,L)>0$ and $c_0$ is the same constant as in \eqref{est:lemleft}.
The claimed estimate \eqref{lem:claim} for $I_1$, follows from the 
above estimate \eqref{lemest:I1} and H\"older's inequality.

Second, we estimate $I_2$ by structure condition \eqref{eq:pstr} and \eqref{eq:comm}, to have 
$$ | I_2| \le c(\beta+1) \int_\Omega \eta^{\beta+4} \tvr^{\beta+2} \weight 
| \X \tvr| \big(|Tu|+|\X u|)\, dx,$$
and it is important to note that the above integral is supported on $E$ from \eqref{vis0}. Therefore, it follows by H\"older's inequality that
\begin{equation}\label{lemest5}
\begin{aligned}
| I_2| &\le c(\beta+1) \Big(\int_E \eta^{\beta+4}\tvr^{\beta+2}\weight|\X \tvr|^2\, dx\Big)^{\frac{1}{2}}
\Big(\int_E \eta^{\gamma(\beta+2)}\tvr^{\gamma(\beta+2)}\weight \, dx\Big)^{\frac{1}{2\gamma}}\\
& \qquad \qquad \times\Big(\int_\Omega \eta^q \,\weight\big(|Tu|^q+|\X u|^q)\, dx\Big)^{\frac{1}{q}},
\end{aligned}
\end{equation}
where $q=2\gamma/(\gamma-1)$. The fact that the first two integrals are on $E$ is crucial as \eqref{eq:comble1} can be exploited to carry out 
the following estimates, 
\begin{align}
\label{lemest6}
\int_E \eta^{\beta+4}\tvr^{\beta+2}\weight|\X \tvr|^2\, dx
 \le c \F(\mu(r)) \int_{B_r} \eta^{\beta+4}
\tvr^{\beta+2}|\X \tvr|^2\, dx, 
\end{align}
and 
\begin{align}
\label{lemest7}
\int_E \eta^{\gamma(\beta+2)}\tvr^{\gamma(\beta+2)}\weight \, dx\le c \F(\mu(r))\mu(r)^{2\gamma}\int_{B_r} \eta^{\gamma\beta} \tvr^{\gamma\beta}\, dx,
\end{align}
which cannot be done otherwise (unless $t\mapsto \F(t)$ is increasing which corresponds to $p\geq 2$ for the $p$-Laplacian). The last integral of \eqref{lemest5} is estimated from \eqref{eq:est Tu} and \eqref{trivial} to get
\begin{equation}\label{lemest8}
\begin{aligned}
\int_\Omega \eta^q \,\weight \big(|Tu|^q+|\X u|^q)\, dx& \le \frac{c}{r^q}\int_{B_r}\weight |\X u|^q\, dx\le \frac{c}{r^q}
| B_r|\F(\mu(r))\mu(r)^q,
\end{aligned}
\end{equation}
for some $c=c(N,p,L,\gamma)>0$. Combining the above three estimates \eqref{lemest6}, \eqref{lemest7} and \eqref{lemest8} for the three integrals in \eqref{lemest5} respectively, we end up with 
\begin{equation*}
| I_2| \le \frac{c}{r}(\beta+1)\F(\mu(r))
\mu(r)^2 | B_r|^{\frac{\gamma-1}{2\gamma}}\Big(\int_{B_r}\eta^{\beta+4}\tvr^{\beta+2}|\X \tvr|^2\, dx\Big)^{\frac{1}{2}}
\Big(\int_{B_r}\eta^{\gamma\beta}\tvr^{\gamma\beta}\, dx\Big)^{\frac{1}{2\gamma}},
\end{equation*}
from which, together with Young's inequality, the claim 
\eqref{lem:claim} for $I_2$ follows.

Finally, for $I_3$, we have 
\begin{equation}\label{lemest10}
\begin{aligned}
I_3 =&-\sum_{i}  \int_\Om A_i(\X u) [X_i, X_l](\eta^{\beta+4}\tvr^{\beta+3})\dx\\
=&-(\beta+4)\sum_i\int_\Omega \eta^{\beta+3}\tvr^{\beta+3} A_i(\X u) [X_i, X_l]\eta\, dx\\
&\ -(\beta+3)\sum_i\int_\Omega \eta^{\beta+4}\tvr^{\beta+2} A_i(\X u) [X_i, X_l]\tvr\, dx
= I_3^1+I_3^2,
\end{aligned}
\end{equation}
where we denote the last two integrals in the above equality by $I_3^1$ and $I_3^2$, respectively. The estimate for $I_3^1$ easily follows from the structure condition \eqref{eq:pstr} and \eqref{trivial} as 
\begin{equation}\label{est:I31}
\begin{aligned}
| I_3^1|&\le  \, c(\beta+1) \int_\Omega \eta^{\beta+3}\tvr^{\beta+3}
\weight |\X u|\big(|T\eta|+|\X\eta|\big)\, dx\\
&\le  \,\frac{c}{r^2}(\beta+1)\F(\mu(r))\mu(r)^4\int_{B_r}\eta^\beta
\tvr^\beta\, dx,
\end{aligned}
\end{equation}
and by H\"older's inequality, $I_3^1$ satisfies estimate \eqref{lem:claim}. To estimate $I_3^2$, we use the structure condition \eqref{eq:pstr},\eqref{eq:comm} and 
\eqref{vis0} to get
\begin{equation}\label{lemest11}
\begin{aligned}
| I_3^2|&\le c(\beta+1)\int_E \eta^{\beta+4}\tvr^{\beta+2}\weight |\X u|\big(|\X\tvr|+|T\tvr|\big)\, dx,\\
&\leq c(\beta+1)\int_E \eta^{\beta+4}\tvr^{\beta+2}\weight |\X u|\big(|\XX u|+|\X T u| +|\X u|+|Tu|\big)\, dx. 
\end{aligned}
\end{equation}
As before, it is crucial that the above integral is supported on $E$ of \eqref{eq:setE} since we require more weight of $|\X u|$ to estimate \eqref{lemest11}, which can be endowed by virtue of \eqref{eq:comble1}. First, we note that 
by H\"older's inequality, we have 
\[\begin{aligned} | I_3^2|\le c&(\beta+1)\Big(\int_E\eta^{\gamma(\beta+2)}\tvr^{\gamma\beta+4(\gamma-1)}\weight\, dx\Big)^{\frac 1 2}\\
& \times \Big(\int_E \eta^{(2-\gamma)(\beta+2)+4}\tvr^{(2-\gamma)(\beta+4)} \weight |\X u|^2
\big(|\XX u|^2+|\X T u|^2 +|\X u|^2+|Tu|^2\big)\, dx\Big)^{\frac 1 2}.
\end{aligned}\]
Then, using \eqref{eq:comble1}, we can obtain
\begin{equation}\label{est:I32}
| I_3^2| \le c(\beta+1)\F(\mu(r))^\frac{1}{2}\mu(r)^{2(\gamma-1)-1} \Big(\int_{B_r} \eta^{\gamma\beta}
\tvr^{\gamma\beta}\, dx\Big)^{\frac 1 2}(M'+M+ \bar M+M_0)^\frac{1}{2}
\end{equation}
where, we denote 
\begin{equation}\label{def:K new}
 M= \int_\Omega \eta^{(2-\gamma)(\beta+2)+4}\,\tvr^{(2-\gamma)(\beta+4)} 
 |\X u|^4 \weight |\X Tu|^2\, dx,
\end{equation}
and $M', \bar M, M_0$ are the other analogous terms containing $|\XX u|^2, |Tu|^2$ and $|\X u|^2$ respectively, in place of $|\X T u|^2$ in \eqref{def:K new}. Note that $M, \bar M$ and $M_0$ are the same 
terms as defined in the proof of Lemma \ref{lem:tech}, with $\tau=2-\gamma$ when $1<\gamma<3/2$. Then, $M$ is estimated by Lemma \ref{lem:tech} and Corollary \ref{cor:tech}, $M'+\bar M+M_0$ is estimated by  H\"older's inequality exactly as in \eqref{eq:tech'} in the proof of Corollary \ref{cor:tech} (to estimate the term corresponding to $M'$, Lemma \ref{lem:start} and \eqref{eq:est Xu} are further used) and hence, we have 
\begin{equation}\label{estm}
M'+M+ \bar M+M_0 \leq  \frac{c}{r^{2\gamma}}\mu(r)^{4(\gamma-1)}\Big(\int_{B_r} \weight |\X u|^\frac{2}{\gamma-1}\dx\Big)^{\gamma-1}\mathcal{I}^{2-\gamma}
\end{equation}
for some $c=c(N,p,L,\gamma)>0$, where $\mathcal{I}$ is as in \eqref{def:I}, i.e. 
\begin{equation*}
\begin{aligned}
\mathcal{I}=&  \,c (\beta+1)^2\int_\Omega \eta^{\beta+2}\big(\eta^2 + |\X \eta|^2+\eta| T\eta|\big)
\tvr^{\beta+4}
\weight |\X u|^4\, dx\\
& +  c(\beta+1)^2\int_\Omega \eta^{\beta+4} \tvr^{\beta+2}\weight |\X u|^4
| \X \tvr|^2\, dx\\
& +
c\int_\Omega\eta^{\beta+4} \tvr^{\beta+4}\weight |\X u|^2 | Tu|^2\, dx.
\end{aligned}
\end{equation*}
It is evident that each term of $\mathcal I$ are estimable from \eqref{trivial}. Indeed, the first term of $\mathcal I$ is estimated by H\"older's inequality and \eqref{trivial}
to get 
\begin{equation}\label{I1}
\begin{aligned}
\int_\Omega \eta^{\beta+2}
\tvr^{\beta+4}
&\weight |\X u|^4\big(\eta^2 + |\X \eta|^2+\eta| T\eta|\big)\, dx\\
& \leq \frac{c}{r^2}\F(\mu(r))\mu(r)^8
|B_r|^{1-\frac{1}{\gamma}}
\Big(\int_{B_r} \eta^{\gamma\beta}\tvr^{\gamma\beta}\dx\Big)^\frac{1}{\gamma}.
\end{aligned}
\end{equation}
For the second term of $\mathcal I$, we have 
\begin{equation}\label{I2}
\begin{aligned}
\int_\Omega \eta^{\beta+4} \tvr^{\beta+2}\weight |\X u|^4
| \X \tvr|^2\, dx\leq  c\F(\mu(r))\mu(r)^4
\int_{B_r} \eta^{\beta+4} \tvr^{\beta+2}| \X \tvr|^2\, dx.
\end{aligned}
\end{equation}
For the third term of $\mathcal I$, we use H\"older's inequality, \eqref{eq:est Tu} and \eqref{trivial} to obtain
\begin{equation}\label{I3}
\begin{aligned}
\int_\Omega 
\eta^{\beta+4} \tvr^{\beta+4}
&\weight |\X u|^2|Tu|^2\dx\\
&\leq \Big(\int_\Om \eta^\frac{2\gamma}{\gamma-1}\weight |\X u|^2
|Tu|^\frac{2\gamma}{\gamma-1}\dx\Big)^{1-\frac{1}{\gamma}}\\
&\quad \times\Big( \int_\Om \eta^{\gamma(\beta+2)}\tvr^{\gamma(\beta+4)}
\weight |\X u|^2 \dx\Big)^\frac{1}{\gamma}\\
&\leq \frac{c}{r^2}\F(\mu(r))\mu(r)^8
|B_r|^{1-\frac{1}{\gamma}}
\Big(\int_{B_r} \eta^{\gamma\beta}\tvr^{\gamma\beta}\dx\Big)^\frac{1}{\gamma}
\end{aligned}
\end{equation}
for some $c=c(N,p,L,\gamma)>0$. Combining \eqref{I1}, \eqref{I2} and \eqref{I3}, we get
 \begin{equation}\label{est:I}
\begin{aligned}
\mathcal I \leq c(\beta+1)^2 \F(\mu(r))\mu(r)^4 \mathcal J
\end{aligned}
\end{equation}
where we denote 
\begin{equation}\label{def:J new}
\mathcal J = \int_{B_r} \eta^{\beta+4} \tvr^{\beta+2} |\X \tvr|^2\dx 
+ \frac{\mu(r)^4}{r^2} |B_r|^{1-\frac{1}{\gamma}}
\Big(\int_{B_r} \eta^{\gamma\beta}\tvr^{\gamma\beta}\dx\Big)^\frac{1}{\gamma}.
\end{equation}
Using \eqref{est:I} and \eqref{trivial} on \eqref{estm}, we obtain
\begin{equation*}
M'+M+ \bar M+M_0 \leq  \frac{c}{r^{2\gamma}} (\beta+1)^{2(2-\gamma)}\F(\mu(r))
 \mu(r)^6 |B_r|^{\gamma-1} \mathcal J^{2-\gamma}, 
\end{equation*}
which is further used in \eqref{est:I32} to obtain
$$ |I^2_3|\leq \frac{c}{r^\gamma}(\beta+1)^{3-\gamma} 
\F(\mu(r))\mu(r)^{2\gamma}\,
\mathcal J^\frac{2-\gamma}{2}|B_r|^\frac{\gamma-1}{2}
\Big(\int_{B_r} 
\eta^{\gamma\beta}\tvr^{\gamma\beta}\dx\Big)^\frac{1}{2}.$$
Then, by Young's inequality, we end up with
\begin{equation*}
\begin{aligned}
|I^2_3|\leq \frac{c_0}{12}(\beta+1)\F(\mu(r)) \mathcal J + \frac{c}{r^2}(\beta+1)^{\frac{4}{\gamma}-1}
 \F(\mu(r))\mu(r)^4
|B_r|^{1-\frac{1}{\gamma}}
\Big(\int_{B_r} \eta^{\gamma\beta}\tvr^{\gamma\beta}\dx\Big)^\frac{1}{\gamma},
\end{aligned}
\end{equation*}
where $c_0>0$ is the same constant as in \eqref{lem:claim} and $\mathcal J$ as in \eqref{def:J new}. Thus, 
$I_3^2$ satisfies 
an estimate similar to \eqref{lem:claim} and hence the claim \eqref{lem:claim}
for $I_3$ follows, since both $I_3^1$ and $I_3^2$ satisfy similar estimates.  
This concludes the proof of the claim \eqref{lem:claim}, and hence the proof of the lemma.
\end{proof}

The following corollary follows from Lemma \ref{lem:main} by using Sobolev's inequality \eqref{eq:sob emb} on \eqref{eq:mainest} and carrying out Moser's iteration. We refer to \cite[Corollary 3.5]{Muk-Zhong} for the proof. In the Euclidean setting, similar statements have been proved earlier in \cite{Dib, Tolk}, etc.
\begin{Cor}\label{prop:case1}
There exists a constant $\theta = \theta(N,p,L)>0$ such that the following holds for any ball $B_r\subset\Omega$. If we have 
\begin{equation}\label{condition1}
 \vert\{x\in B_r : X_lu<\mu(r)/4\} \vert\leq \theta |B_r|
\end{equation}
for an index $l\in\{1,\ldots,m\}$, then 
 \[\inf_{B_{r/2}}X_lu\ge 3\mu(r)/16.\]
Analogously, if we have 
\begin{equation}\label{condition2}
\vert\{x\in B_r : X_lu>-\mu(r)/4\}\vert\leq \theta |B_r|,
\end{equation}
for an index $l\in\{1,\ldots,m\}$, then  
\[\sup_{B_{r/2}}X_lu \le\, -3\mu(r)/16.\]
\end{Cor}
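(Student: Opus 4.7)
The plan is to derive a Moser-type $L^\infty$ bound on $\tvr$ from the main Caccioppoli estimate \eqref{eq:mainest} and then feed in the smallness hypothesis \eqref{condition1} to conclude $\tvr<\mu(r)/16$ on $B_{r/2}$; recalling the definition \eqref{def:v}, this yields $X_l u>\mu(r)/4-\mu(r)/16=3\mu(r)/16$ on $B_{r/2}$, as desired.

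\textbf{Step 1: Converting \eqref{eq:mainest} into a reverse-H\"older inequality.} Fix $\gamma\in(1,Q/(Q-2))$ small enough and, for any concentric balls $B_{r'}\subset B_{r''}\subset B_r$, choose $\eta\in C^\infty_0(B_{r''})$ with $\eta\equiv 1$ on $B_{r'}$, $|\X\eta|\leq c/(r''-r')$ and $|\XX\eta|+|\eta T\eta|\leq c/(r''-r')^2$. Setting $w=\tvr^{(\beta+2)/2}$, a standard manipulation of the left-hand side of \eqref{eq:mainest} gives $\int_{B_{r'}}|\X w|^2\,dx\leq c\,r^{-2}(\beta+1)^4\mu(r)^4|B_r|^{1-1/\gamma}(\int_{B_{r''}}\tvr^{\gamma\beta}\,dx)^{1/\gamma}$, to which I apply the sub-elliptic Sobolev inequality \eqref{eq:sob emb} with $q=2$ and exponent $\kappa=Q/(Q-2)>\gamma$.

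\textbf{Step 2: Moser iteration.} Choosing an increasing sequence of exponents $\beta_i$ so that $\gamma\beta_{i+1}=\kappa(\beta_i+2)$ and a decreasing sequence of radii $r_i=r/2+r/2^{i+1}$, iterating the resulting reverse H\"older inequality yields, in the standard manner (cf.\ the interpolation lemma in \cite{Dib-Tru} and the analogous iteration in \cite[Cor.\ 3.5]{Muk-Zhong}),
\[
\sup_{B_{r/2}}\tvr \;\leq\; \frac{c\,\mu(r)^{4/q_0}}{|B_r|^{1/q_0}}\,\Bigl(\int_{B_r}\tvr^{q_0}\,dx\Bigr)^{1/q_0},
\]
for some exponent $q_0=q_0(N,p,L)>0$, where $c=c(N,p,L)>0$.

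\textbf{Step 3: Exploiting the pointwise bound and the smallness hypothesis.} Since $0\leq\tvr\leq\mu(r)/8$ pointwise and $\tvr\equiv 0$ on $\{X_l u\geq\mu(r)/4\}$, the support of $\tvr$ in $B_r$ is contained in $S:=\{x\in B_r:X_l u<\mu(r)/4\}$, so
\[
\int_{B_r}\tvr^{q_0}\,dx \;\leq\; (\mu(r)/8)^{q_0}\,|S|\;\leq\;(\mu(r)/8)^{q_0}\,\theta\,|B_r|.
\]
Plugging this into the bound from Step 2 gives $\sup_{B_{r/2}}\tvr\leq c\,\theta^{1/q_0}\,\mu(r)$ for some $c=c(N,p,L)>0$. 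Choosing $\theta=\theta(N,p,L)>0$ so small that $c\,\theta^{1/q_0}<1/16$ yields $\sup_{B_{r/2}}\tvr<\mu(r)/16$, hence $X_l u>3\mu(r)/16$ on $B_{r/2}$.

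\textbf{Step 4: The symmetric statement.} For \eqref{condition2}, I repeat the same argument with the mirror truncation $\tvr'$ defined in \eqref{def:v'}, for which all estimates of Section \ref{sec:hold} hold identically, obtaining $-X_l u>3\mu(r)/16$ on $B_{r/2}$, that is, $\sup_{B_{r/2}}X_l u\leq-3\mu(r)/16$.

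The only nontrivial point is Step 1--2: one must verify that the iteration converges, which requires that the ratio $\kappa/\gamma>1$ with $\kappa=Q/(Q-2)$; choosing $\gamma$ in $(1,\kappa)$ (which is possible since $Q>2$) and tracking the dependence of the constant on $\beta_i$ gives a geometric sum that is summable, as in the classical De Giorgi--Moser scheme.
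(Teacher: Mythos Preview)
Your approach is correct and is exactly the one the paper indicates (Lemma~\ref{lem:main} $+$ Sobolev $+$ Moser iteration, as in \cite[Corollary~3.5]{Muk-Zhong}). One minor slip: the displayed bound in Step~2 is dimensionally off---the extra factor $\mu(r)^{4/q_0}$ should not appear (equivalently, iterate with the normalized function $\tvr/\mu(r)$, for which \eqref{eq:mainest} becomes a clean Caccioppoli inequality after dividing by $\mu(r)^{\beta+4}$); with this correction your Step~3 computation $\sup_{B_{r/2}}\tvr\le c\,\theta^{1/q_0}\mu(r)$ follows as written.
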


\subsection{De Giorgi's method} It is evident that, equipped with Lemma \ref{lem:main} and Corollary \ref{prop:case1}, the problem is reduced to the case for that of a uniformly elliptic equation where the H\"older continuity of the gradient has been shown in the fundamental work of De Giorgi \cite{DeG}. The techniques have been adopted for degenerated equations for the Euclidean case in \cite{Lady-Ural, Evans, Dib, Tolk}, etc. and for the case of the Heisenberg Group in \cite{Min-Z-Zhong, Zhong, Muk-Zhong}, etc. 

To this end, some notations are in order. For any $B_{\rho_0}\subset \R^N$, the De Giorgi's
class $DG^+(B_{\rho_0})$ consists of functions $w\in
\subsob^{1,2}(B_{\rho_0})\cap L^\infty(B_{\rho_0})$, which satisfy the inequality 
\begin{equation}\label{eq:DG}
\int_{B_{\rho'}}|\X (w-k)^+|^2\, dx\le
\frac{\gamma}{(\rho-\rho')^2}
\int_{B_\rho}|(w-k)^+|^2\, dx+\chi^2|
A_{k,\rho}^+(w)|^{1-\frac{2}{Q}+\epsilon}
\end{equation}
for some $\gamma, \chi,\epsilon>0$, 
where $ A_{k,\rho}^+(w)=\{ x\in B_\rho: (w-k)^+=\max (w-k,0)>0\}$ for any arbitrary $k\in\R$, the balls $B_{\rho'},B_\rho$ and 
$B_{\rho_0}$ are concentric with $0<\rho'<\rho\le \rho_0$. Also, $A_{k,\rho}^-(w)$ and the class $DG^-(B_{\rho_0})$ are similarly defined and $DG(B_{\rho_0})=
DG^+(B_{\rho_0})\cap DG^-(B_{\rho_0})$. All properties of classical 
De Giorgi class functions, also hold for these classes. 

In order to establish integral inequality of the type \eqref{eq:DG} for the gradient, we require the following lemma. The proof is similar to Lemma 4.1 of \cite{Muk-Zhong} but more involved. 
\begin{Lem}\label{lem:cacci:k}
Let $B_{r_0}\subset\Omega$ and $0<r<r_0/2$ be fixed. Suppose that there is $\tau>0$ such that 
 \begin{equation}\label{comble'}
 \vert \X u\vert\ge \tau \mu(r) \quad \text{in }\, A_{k,r}^+(X_l u),
 \end{equation}
 for an index $l\in\{1,\ldots,m\}$ and a constant $k\in \R$. Then for any $q\ge 4$ and any 
$0<r^{\prime\prime}<r^\prime\le r$, we have the following inequality, 
\begin{equation*}
\begin{aligned}
 \int_{B_{r^{\prime\prime}}} \weight | \X(X_lu-k)^{+}|^2\, dx
 \le  
 \frac{c}{(r^\prime-r^{\prime\prime})^2}\int_{B_{r^{\prime}}} \weight
 |(X_lu-k)^{+}|^2\, dx\,+\, cK| A^+_{k,r^\prime}(X_lu)|^{1-\frac{2} {q}},
\end{aligned}
\end{equation*}
where $K = r_0^{-2}|B_{r_0}|^{2/q}\mu(r_0)^2\F(\mu(r_0))$ 
and $c=c(N,p,L,q,\tau)>0$. 
\end{Lem}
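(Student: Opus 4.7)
The plan is to test the differentiated equation \eqref{eq:yeqwXl} with $\varphi=\eta^{2}(X_{l}u-k)^{+}$, where $\eta\in C^{\infty}_{0}(B_{r'})$ is a standard cutoff with $\eta\equiv 1$ on $B_{r''}$, $|\X\eta|\le c/(r'-r'')$ and $|\XX\eta|\le c/(r'-r'')^{2}$. Since $X_{i}\varphi=\eta^{2}\chi_{A^{+}_{k,r'}}X_{i}X_{l}u+2\eta\,X_{i}\eta\,(X_{l}u-k)^{+}$, the structure condition \eqref{eq:str} bounds the left-hand side of \eqref{eq:yeqwXl} from below by $\int\eta^{2}\weight|\X(X_{l}u-k)^{+}|^{2}\,dx$ minus an error whose $|\X(X_{l}u-k)^{+}|\cdot(X_{l}u-k)^{+}|\X\eta|$ portion is absorbed into the main term on the left and into the first term $(r'-r'')^{-2}\int\weight|(X_{l}u-k)^{+}|^{2}$ on the right via Young's inequality.

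Next I would treat the two commutator terms on the right-hand side of \eqref{eq:yeqwXl}. The first, involving $[X_{j},X_{l}]u$, expands via \eqref{eq:comm} into contributions bounded by $c\int_{A^{+}_{k,r'}}\eta\,\weight(|\X u|+|Tu|)|\X(X_{l}u-k)^{+}|\,dx$ plus a standard cut-off error; the $|\X u|$-piece is harmless after Young, while the $|Tu|$-piece is controlled by H\"older with exponent $q\ge 4$ on $A^{+}_{k,r'}$ together with the global bound \eqref{eq:est Tu}. The second, involving $[X_{i},X_{l}]\varphi$, is the delicate one: writing $[X_{i},X_{l}]=X_{i}X_{l}-X_{l}X_{i}$ and integrating by parts, exactly as in the treatment of $I^{k}_{4}$ in the proof of Lemma~\ref{lem:start}, produces derivatives landing on $D_{i}f(\X u)$ and hence terms of the form $\int\eta^{2}\weight\,|\XX u||\X(X_{l}u-k)^{+}|$, the commutator-of-commutator terms $[[X_{i},X_{l}],X_{l}]u$ and $[[X_{i},X_{l}],X_{j}]u$ bounded via \eqref{eq:XXX}, and boundary terms involving $\X\eta$ and $[X_{i},X_{l}]\eta$. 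After a further integration by parts to move one derivative off $\XX u$, the remaining second-order terms are replaced by $|\X Tu|$ and estimated by H\"older and \eqref{eq:beta0XT}, Corollary~\ref{cor:est XTu} and Corollary~\ref{cor:est0 XTu}.

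The crucial simplification comes from hypothesis \eqref{comble'}: on the support $A^{+}_{k,r'}(X_{l}u)$ we have $\tau\mu(r)\le|\X u|\le m^{1/2}\mu(r_{0})$, so using \eqref{trivial} and the continuity of $\F$, all occurrences of $\weight$ are comparable to $\F(\mu(r_{0}))$ up to constants depending only on $\tau, p$. Every commutator error integral therefore reduces, after one application of H\"older's inequality with exponent $q$ on $A^{+}_{k,r'}$, to the form
\[
c\,\F(\mu(r_{0}))\,\mu(r_{0})^{a}\,|A^{+}_{k,r'}|^{1-2/q}\Bigl(\int_{B_{r_{0}}}\eta_{0}^{\,s}\weight\bigl(|Tu|^{q}+|\X u|^{q}+|\X Tu|^{q}\bigr)\,dx\Bigr)^{2/q}
\]
for appropriate exponents, where $\eta_{0}$ is a fixed cutoff between $B_{r}$ and $B_{r_{0}}$ (legitimate since $2r<r_{0}$). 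The Section~\ref{sec:apest} estimates \eqref{eq:est Tu}, \eqref{eq:est XTu}, \eqref{eq:est0 XTu} applied with this $\eta_{0}$ bound each $L^{q}$ integral by $\mu(r_{0})^{q}\,\F(\mu(r_{0}))\,|B_{r_{0}}|\,r_{0}^{-q}$ (after \eqref{trivial}), yielding precisely the claimed constant $K=r_{0}^{-2}|B_{r_{0}}|^{2/q}\mu(r_{0})^{2}\F(\mu(r_{0}))$ together with the factor $|A^{+}_{k,r'}|^{1-2/q}$.

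The main obstacle is the bookkeeping of the $[X_{i},X_{l}]\varphi$ term: unlike the Heisenberg setting of \cite{Muk-Zhong} where $[X_{i},T_{k}]=0$ and commutators of commutators vanish in the directions that matter, here the full strength of the step-two hypothesis must be invoked via \eqref{eq:XXX} to dispose of $[[X_{i},X_{l}],X_{l}]u$ and $[[X_{i},X_{l}],X_{j}]u$, and the arising $\X Tu$ must be traded against the global bound Corollary~\ref{cor:est0 XTu}. Once this bookkeeping is done, collecting terms, choosing the Young parameter small enough to absorb the main-term contributions on the left, and reabsorbing the boundary errors into $(r'-r'')^{-2}\int\weight|(X_{l}u-k)^{+}|^{2}$ produces the stated inequality.
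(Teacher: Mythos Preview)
Your overall architecture is right (test \eqref{eq:yeqwXl} with $\varphi=\eta^{2}(X_{l}u-k)^{+}$, handle the first commutator via H\"older and \eqref{eq:est Tu}, and recognise the second commutator $[X_{i},X_{l}]\varphi$ as the hard term), but your treatment of that hard term contains a real gap. You propose to reduce it, after integrations by parts, to integrals of the form
\[
\Bigl(\int_{B_{r_{0}}}\eta_{0}^{\,s}\weight\bigl(|Tu|^{q}+|\X u|^{q}+|\X Tu|^{q}\bigr)\,dx\Bigr)^{2/q}\cdot |A^{+}_{k,r'}|^{1-2/q},
\]
and then invoke \eqref{eq:est Tu}, Corollary~\ref{cor:est XTu}, Corollary~\ref{cor:est0 XTu}. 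But those corollaries give only \emph{weighted $L^{2}$} bounds for $\X Tu$ (namely $\int\weight |Tu|^{\beta}|\X Tu|^{2}$ and $\int\weight |\X u|^{\beta}|\X Tu|^{2}$), not $L^{q}$ bounds $\int\weight|\X Tu|^{q}$. A direct Cauchy--Schwarz using the available $L^{2}$ estimates can at best produce $|A^{+}_{k,r'}|^{1/2}$, which matches the claimed $|A^{+}_{k,r'}|^{1-2/q}$ only at $q=4$; for $q>4$ (and the lemma is applied with $q=2Q$ in Lemma~\ref{lem:hold}) the power is strictly too small. The paper is explicit about this obstruction: using the Section~\ref{sec:apest} estimates ``right away would not be appropriate because the support $A_{k,r}^{+}$ would be lost.''

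The device the paper uses instead is an \emph{iteration}. After H\"older, the hard term is controlled by $a(0)^{1/2}$ with
\[
a(\kappa)=\int_{B_{r}}\eta^{2}|(X_{l}u-k)^{+}|^{2}\weight\bigl(|Tu|^{\kappa}+|\X u|^{\kappa}\bigr)\bigl(|\XX u|^{2}+|\X Tu|^{2}+|\X u|^{2}+|Tu|^{2}\bigr)\,dx.
\]
Testing \eqref{eq:Teq} and \eqref{eq:yeqwXl} with $\eta^{2}|(X_{l}u-k)^{+}|^{2}|Tu|^{\kappa}T_{k}u$ (and its $|\X u|^{\kappa}$ variants) yields $a(\kappa)\le c(\kappa+1)\mathcal{M}^{1/2}a(2\kappa+2)^{1/2}$, where $\mathcal{M}$ is the Caccioppoli quantity \eqref{Mcal}. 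Iterating $j$ times gives $a(0)\le (c\mathcal{M})^{1-1/2^{j}}a(2^{j+1}-2)^{1/2^{j}}$; only \emph{now} is the top term $a(2^{j+1}-2)$ estimated globally via \eqref{eq:est Xu}, \eqref{eq:est Tu}, \eqref{eq:est XTu}, \eqref{eq:est0 XTu}, and the resulting loss of the $A^{+}$-support is harmless because it is raised to the power $1/2^{j}$. Choosing $j$ large enough that $2^{j}/(2^{j}+1)\ge 1-2/q$ then produces the required $|A^{+}_{k,r'}|^{1-2/q}$. This self-improving step is the missing idea in your outline.
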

\begin{proof}
Note that we can assume $|k|\leq \mu(r_0)$ without loss of generality, since otherwise all the terms vanish. 
Recalling \eqref{eq:yeqw2} with $Y=X_l$, we have 
\begin{equation}\label{eq:yeqw2xl}
\begin{aligned}
\sum_{i,j}\int_\Om D_jA_i(\X u)X_jX_lu X_i\varphi\dx &= \sum_{i,j}\int_\Om D_jA_i(\X u)[X_j,X_l]u X_i\varphi\dx \\
&\quad + \sum_{i}  \int_\Om A_i(\X u) [X_i, X_l]\varphi\dx.
\end{aligned}
\end{equation}
Let $\eta\in C^\infty_0(B_{r'})$ is a standard cutoff function such that $\eta = 1$ in $B_{r''}$ and $|\eta|+|\X\eta|\leq 2/(r'-r'')$, we choose 
 $\varphi = \eta^2 (X_lu -k)^+$ as a test function in 
 equation \eqref{eq:yeqw2xl} to get 
\begin{equation}\label{eqj}
\begin{aligned}
 \sum_{i,j}\int_{B_r}&\eta^2D_jA_i(\X u)X_jX_lu X_i((X_lu -k)^+)\, dx\\
&= -2\sum_{i,j}\int_{B_r}\eta (X_lu -k)^+ D_jA_i(\X u)X_jX_lu X_i \eta\, dx\\
&\quad + \sum_{i,j}\int_\Om D_jA_i(\X u)[X_j,X_l]u X_i(\eta^2 (X_lu -k)^+)\, dx\\
&\quad -\sum_{i,j}\int_{B_r} \eta^2 (X_lu -k)^+  D_jA_i(\X u) [X_i,X_l]X_ju\, dx\\
&= J_1+J_2+J_3,
\end{aligned}
\end{equation}
where integral by parts is performed on the last term of \eqref{eq:yeqw2xl}. 
From \eqref{eq:pstr}, note that 
\begin{equation}\label{eqj0}
\text{LHS of \eqref{eqj}} \geq \int_{B_r}\eta^2\, \weight | \X(X_lu-k)^{+}|^2\, dx, 
\end{equation}
and we now estimate each $J_i$ of the right hand side of \eqref{eqj}, one by one. 

From \eqref{eq:pstr} and Young's inequality, we have 
\begin{equation}\label{eqj1}
\begin{aligned}
|J_1|&\leq c\int_{B_r}|\eta| |(X_lu -k)^+| \weight |\XX u| |\X \eta|\, dx\\
&\leq \eps \int_{B_r}\eta^2\, \weight | \X(X_lu-k)^{+}|^2\, dx
+\frac{c}{\eps}\int_{B_r}|\X\eta|^2 \weight |(X_lu-k)^{+}|^2\, dx.
\end{aligned}
\end{equation}

Similarly, from \eqref{eq:pstr}, \eqref{eq:comm} and Young's inequality, we have 
\begin{equation}\label{eqj2}
\begin{aligned}
|J_2|&\leq c\int_\Om \weight \big(|\Xu|+|Tu|\big) \Big(\eta^2 | \X(X_lu-k)^{+}|+|\eta| |\X\eta| |(X_lu -k)^+)|\Big)\dx\\
&\leq \eps \int_{B_r}\eta^2 \weight | \X(X_lu-k)^{+}|^2\, dx
+\eps \int_{B_r}|\X\eta|^2 \weight |(X_lu-k)^{+}|^2\, dx\\
&\quad +\frac{c}{\eps} \int_{A^+_{k,r}}\eta^2 \weight \big(|\Xu|^2+|Tu|^2\big)\dx. 
\end{aligned}
\end{equation}
Then the last term of the above is estimated by H\"older's inequality, \eqref{eq:est Tu} and \eqref{comble'} as 
\begin{equation}\label{eqj2'}
 \begin{aligned}
 \int_{A^+_{k,r}}\eta^2 &\weight \big(|\Xu|^2+|Tu|^2\big)\dx\\
&\le \Big(\int_{B_{r_0/2}} \weight 
\big(|\Xu|^q+|Tu|^q\big)\,
dx\Big)^{\frac 2 q}\Big(\int_{A_{k,r}^+}\weight\,
dx\Big)^{1-\frac{2}{q}}\\
&\leq c\,r_0^{-2}\mu(r_0)^2\F(\mu(r_0))
|B_{r_0}|^\frac{2}{q}| A^+_{k,r}(X_lu)|^{1-\frac{2} {q}}
 \end{aligned}
\end{equation}
for some $c=c(N,p,L,q,\tau)>0$. 

Finally, for $J_3$, by \eqref{eq:pstr},\eqref{eq:comm} and H\"older's inequality, 
\begin{equation*}
 \begin{aligned}
 |J_3| &\leq \int_{B_r} \eta^2 \weight |(X_lu -k)^+| \big(|\XX u|+|\X T u| +|\X u|+|Tu|\big)\, dx\\
&\leq \Big(\int_{A_{k,r}^+} \eta^2 |(X_lu -k)^+|^2 \weight  \big(|\XX u|^2+|\X T u|^2 +|\X u|^2+|Tu|^2\big)\dx\Big)^\frac{1}{2}\Big(\int_{A_{k,r}^+} \weight dx\Big)^\frac{1}{2}
 \end{aligned}
\end{equation*}
The estimate of integrand with the terms $|\X u|^2+|Tu|^2$ of the above is same as \eqref{eqj2'}. However, estimating the first two terms of the above integrand is more delicate; the use of \eqref{eq:beta0XX},\eqref{eq:beta0XT} or any other estimate of Section \ref{sec:apest} right away would not be appropriate because the support $A_{k,r}^+$ would be lost that way, thereby producing terms larger than the required right hand side. 

To circumvent this, we require the following variants of the Caccioppoli type estimates that is similar to \cite{Zhong, Muk-Zhong}. 
For any $\kappa\geq 0$, we test \eqref{eq:Teq} with $\varphi_k=\eta^2 |(X_lu -k)^+|^2 |Tu|^\kappa T_ku $. Then, taking summation over $k\in\{1,\ldots,n\}$, we obtain 
\begin{equation}\label{ueq}
  \begin{aligned}
 \sum_{i,j,k}\int_\Omega &\eta^2  |(X_lu -k)^+|^2 |Tu|^\kappa D_jA_i(\X u)X_jT_k uX_iT_ku\, dx \\
+\kappa  &\sum_{i,j,k}\int_\Omega  \eta^2 | Tu|^{\kappa-1}T_k u  |(X_lu -k)^+|^2 D_jA_i(\X u)X_jT_k uX_i(|Tu|)\dx \\
   &\ = -2\sum_{i,j,k}\int_\Omega \eta\,  |(X_lu -k)^+|^2 |Tu|^\kappa T_k u D_jA_i(\X u)X_jT_k uX_i\eta\, dx\\
 &\qquad -2 \sum_{i,j,k}\int_\Omega  \eta^2 (X_lu -k)^+|Tu|^\kappa T_k u D_jA_i(\X u)X_jT_k uX_i((X_lu -k)^+)\dx \\
&\qquad + \sum_{i,j,k}  \int_\Om D_jA_i(\X u)[X_j,T_k]u X_i\varphi_k\dx \\
&\qquad + \sum_{i,j,k}    \int_\Om  \varphi_k D_jA_i(\X u) [X_i, T_k]X_j u\dx\\
&= U_1+U_2+U_3+U_4
  \end{aligned}
 \end{equation}
From the structure condition \eqref{eq:pstr}, we have 
\begin{equation}\label{ueq0}
\text{LHS of \eqref{ueq}} \geq  \int_{B_r}\eta^2\weight 
|(X_lu -k)^+|^2 |Tu|^\kappa |\X Tu|^2\dx. 
\end{equation}
The right hand side of \eqref{ueq} is estimated as follows. Let us denote 
\begin{equation}\label{Mcal}
 \begin{aligned}
  \mathcal{M} = \int_{B_r}\eta^2 \weight | \X(X_lu-k)^{+}|^2\, dx
  + \int_{B_r}(\eta^2+|\X\eta|^2)\weight |(X_lu-k)^{+}|^2\, dx. 
 \end{aligned} 
\end{equation}
Then, from \eqref{eq:pstr} and H\"older's inequality, we have 
\begin{equation}\label{ueq12}
\begin{aligned}
  |U_1|+|U_2| &\leq c \int_{B_r} |\eta| |(X_lu-k)^+|^2\weight
  |Tu|^{\kappa+1}|\X Tu||\X\eta|\dx\\
 &\quad + c \int_{B_r} \eta^2|(X_lu-k)^+|\weight
  |Tu|^{\kappa+1}|\X Tu||\X (X_lu-k)^+|\dx\\
&\leq c \,\mathcal{M}^\frac{1}{2} \Big(\int_{B_r}\eta^2
|(X_lu- k)^+|^2 \weight|Tu|^{2\kappa+2}|\X Tu|^2 dx\Big)^\frac{1}{2}.
 \end{aligned}
\end{equation}
To continue the estimates, note that
\begin{equation}\label{ueq3}
  \begin{aligned}
 U_3 &= \sum_{i,j,k}\int_\Omega \eta^2  |(X_lu -k)^+|^2 |Tu|^\kappa D_jA_i(\X u)[X_j,T_k]uX_iT_ku\, dx \\
&\qquad +\kappa  \sum_{i,j,k}\int_\Omega  \eta^2 | Tu|^{\kappa-1}T_k u  |(X_lu -k)^+|^2 D_jA_i(\X u)[X_j,T_k]uX_i(|Tu|)\dx \\
   &\qquad  +2\sum_{i,j,k}\int_\Omega \eta\,  |(X_lu -k)^+|^2 |Tu|^\kappa T_k u D_jA_i(\X u)[X_j,T_k]uX_i\eta\, dx\\
 &\qquad +2 \sum_{i,j,k}\int_\Omega  \eta^2 (X_lu -k)^+|Tu|^\kappa T_k u D_jA_i(\X u)[X_j,T_k]uX_i((X_lu -k)^+)\dx,
  \end{aligned}
 \end{equation}
which, from \eqref{eq:pstr}, \eqref{eq:comm} and H\"older's inequality leads to 
\begin{equation}\label{ueq3est}
  \begin{aligned}
 |U_3| &\leq c(\kappa+1)\int_\Omega \eta^2 \weight |(X_lu -k)^+|^2 |Tu|^\kappa \big(|\Xu|+|Tu| \big)|\X Tu|\, dx \\
   &\qquad  +c\int_\Omega |\eta| \weight |(X_lu -k)^+|^2 |Tu|^{\kappa+1} \big(|\Xu|+|Tu| \big)|\X\eta|\, dx\\
 &\qquad +c\int_\Omega  \eta^2 |(X_lu -k)^+| \weight |Tu|^{\kappa+1} \big(|\Xu|+|Tu| \big) |\X (X_lu -k)^+)|\dx\\
&\leq c(\kappa+1) 
\mathcal{M}^\frac{1}{2} \Big(\int_{B_r}\eta^2
|(X_lu- k)^+|^2 \weight |Tu|^{2\kappa}\big(|\Xu|^{2}+|Tu|^{2} \big) |\X Tu|^2 dx\Big)^\frac{1}{2}\\
&\qquad + c\,
\mathcal{M}^\frac{1}{2}  \Big(\int_{B_r}\eta^2
|(X_lu- k)^+|^2 \weight |Tu|^{2\kappa+2}\big(|\Xu|^{2}+|Tu|^{2} \big)dx\Big)^\frac{1}{2}
  \end{aligned}
 \end{equation}
Finally, from \eqref{eq:pstr}, \eqref{eq:XTX} and H\"older's inequality, we have 
\begin{equation*}
  \begin{aligned}
 |U_4| &\leq c\int_\Omega \eta^2 \weight |(X_lu -k)^+|^2 |Tu|^{\kappa+1} 
\big(|\XX u| +|\X Tu| +|\Xu|+|Tu| \big)\, dx \\
&\leq c\,\mathcal{M}^\frac{1}{2} 
 \Big(\int_{B_r}\eta^2
|(X_lu- k)^+|^2 \weight |Tu|^{2\kappa+2}\big(|\XX u|^2+|\X Tu|^2 + |\Xu|^{2}+|Tu|^{2} \big)dx\Big)^\frac{1}{2},
  \end{aligned}
 \end{equation*}
which combined with the previous estimates \eqref{ueq}, \eqref{ueq0}, \eqref{ueq12} and \eqref{ueq3est}, leads to 
\begin{equation}\label{parta1}
  \begin{aligned}
  \int_{B_r}\eta^2\weight 
|(X_lu -k)^+|^2 |Tu|^\kappa |\X Tu|^2\dx\leq  c(\kappa+1)\mathcal{M}^\frac{1}{2} a(2\kappa+2)^\frac{1}{2}
  \end{aligned}
 \end{equation}
where $c=c(N,p,L)>0$, $\mathcal M$ is as in \eqref{Mcal} and we denote
\begin{equation}\label{defa}
a(\kappa)=
\int_{B_r}\eta^2
|(X_lu- k)^+|^2 \weight 
\big(|Tu|^{\kappa}+|\X u|^{\kappa}\big) \big(|\XX u|^2+|\X Tu|^2 + |\Xu|^{2}+|Tu|^{2} \big)dx
\end{equation}
for any $\kappa\geq 0$. Notice that the left hand side of \eqref{parta1} contains a part of $a(\kappa)$. To get the other parts one requires similar estimates. From the symmetric structure of commutation relations \eqref{eq:comm}, it is not difficult to see that a similar estimate can be obtained as above by testing \eqref{eq:Teq} with 
$\varphi_k=\eta^2 |(X_lu -k)^+|^2 |\X u|^\kappa T_ku $, i.e. we can obtain
\begin{equation}\label{parta2}
  \begin{aligned}
  \int_{B_r}\eta^2\weight 
|(X_lu -k)^+|^2 |\X u|^\kappa |\X Tu|^2\dx\leq  c(\kappa+1)\mathcal{M}^\frac{1}{2} a(2\kappa+2)^\frac{1}{2}.
  \end{aligned}
 \end{equation}
Similarly, it is also not hard to see that by taking
$\varphi_l=\eta^2 |(X_lu -k)^+|^2 \big(|\X u|^\kappa +|Tu|^\kappa\big) X_lu $ 
as the test function in \eqref{eq:yeqwXl} and following the steps similarly as above, we can obtain 
\begin{equation}\label{parta34}
  \begin{aligned}
  \int_{B_r}\eta^2\weight 
|(X_lu -k)^+|^2 \big(|\X u|^\kappa +|Tu|^\kappa\big) |\XX u|^2\dx\leq  c(\kappa+1)\mathcal{M}^\frac{1}{2} a(2\kappa+2)^\frac{1}{2}.
  \end{aligned}
 \end{equation}
Finally, the following is easy to see from Young's and H\"older's inequality that 
\begin{equation}\label{partatrivial}
  \begin{aligned}
  \int_{B_r}\eta^2\weight 
|(X_lu -k)^+|^2 \big(|\X u|^\kappa +|Tu|^\kappa\big) \big(|\Xu|^{2}+|Tu|^{2} \big)\dx\leq  c\mathcal{M}^\frac{1}{2} a(2\kappa+2)^\frac{1}{2}.
  \end{aligned}
 \end{equation}
Thus, by adding \eqref{parta1},\eqref{parta2},\eqref{parta34} and \eqref{partatrivial}, we finally get 
\begin{equation}\label{ait}
  \begin{aligned}
  a(\kappa) \leq  c(\kappa+1)\mathcal{M}^\frac{1}{2} a(2\kappa+2)^\frac{1}{2},
  \end{aligned}
 \end{equation}
for any $\kappa\geq 0$, where $\mathcal M$ is as in \eqref{Mcal}. We iterate \eqref{ait} with the sequence
$ \kappa_j = 2^j-2$ for $j\in \N$ and letting $c= c(N,p,L,j)$ and $a_j =a(\kappa_j)$, so that we get 
\begin{equation}\label{it}
a_1 \,\leq\, (c\,\mathcal{M})^\frac{1}{2} a_2^\frac{1}{2}\,\leq\, \ldots
\,\leq (c\,\mathcal{M})^{(1-{1/2^j})}\,a_{j+1}^{1/2^j}
\end{equation} 
for every $j\in \N$. 
Now, for 
a large enough $j$ to be chosen later, 
we estimate using all the apriori estimates obtained finally in Section \ref{sec:apest}, i.e. \eqref{eq:est Xu}, \eqref{eq:est Tu}, \eqref{eq:est XTu}, \eqref{eq:est0 XTu}, to get
\begin{equation}\label{est a}
 a_{j+1}  \le \frac{c \mu(r_0)^2}{r_0^{\kappa_{j+1}+4}}\int_{B_{r_0}}
  \weight |\X u|^{\kappa_{j+1}+2} \, dx
\leq c\,r_0^{-(\kappa_{j+1}+4)}
 \F(\mu(r_0))
 \mu(r_0)^{\kappa_{j+1}+4}\, |B_{r_0}|. 
\end{equation}
Now we are ready to estimate $J_3$. Notice that, from \eqref{comble'} we have 
$$ |J_3 | \leq c\,a_1^{1/2} \Big(\int_{A_{k,r}^+} \weight dx\Big)^\frac{1}{2}
\leq c\,a_1^{1/2}\F(\mu(r_0))^{1/2}
  |A_{k,r}^+(X_lu)|^{1/2}$$ 
for some $c=c(N,p,L,\tau)>0$. Then, applying the iteration \eqref{it} and the estimate \eqref{est a},
\begin{equation*}
  \begin{aligned}
   |J_3| &\leq c\,\mathcal{M}^{\frac{1}{2}(1-{1/2^j})}
   \,a_{j+1}^{1/2^{j+1}}\F(\mu(r_0))^{1/2}
  |A_{k,r}^+(X_lu)|^\frac{1}{2}\\
  &\leq \frac{c}{r_0^{(1+{1/2^j})}}\mathcal{M}^{\frac{1}{2}(1-{1/2^j})}
  \F(\mu(r_0))^{\frac{1}{2}(1+{1/2^j})}
  \mu(r_0)^{(1+{1/2^j})}|B_{r_0}|^{1/2^{j+1}}|A_{k,r}^+(X_lu)|^\frac{1}{2}.
  \end{aligned}
 \end{equation*}
Then, by Young's inequality, we finally obtain
\begin{equation}\label{eqj3}
 |J_3| \leq \mathcal M/2 + c\,r_0^{-2}
\F(\mu(r_0))\mu(r_0)^2
|B_{r_0}|^{1/(2^j+1)}| A^+_{k,r}(X_lu)|^{2^j/(2^j+1)}
\end{equation}
for some $c = c(N,p,L,\tau,j)>0$. This, with  
$j = j(q)\in\N$ such that $2^j/(2^j+1)\,\geq\, 1-2/q$, gives us the required estimate. Combining \eqref{eqj1} and \eqref{eqj2},\eqref{eqj2'} with a small enough $\eps>0$ and \eqref{eqj3} with the choice of $j=j(q)$, we finally end up with 
\begin{align*}
\int_{B_r}\eta^2 \weight | \X(X_lu-k)^{+}|^2\, dx &\leq c\int_{B_r}(\eta^2+|\X\eta|^2)\weight |(X_lu-k)^{+}|^2\, dx \\
&\ +  c\,r_0^{-2}\mu(r_0)^2\F(\mu(r_0))
|B_{r_0}|^\frac{2}{q}| A^+_{k,r}(X_lu)|^{1-\frac{2} {q}}
\end{align*}
for some $c=c(N,p,L,q,\tau)>0$ and the proof is finished. 
\end{proof}
\begin{Rem}\label{rem:-version}
Some observations on Lemma \ref{lem:cacci:k} are in order:
\begin{enumerate}
\item We can obtain a similar inequality, corresponding to that of Lemma \ref{lem:cacci:k} 
 with $(X_lu-k)^+$ replaced by $(X_lu-k)^-$ and
 $A^+_{k,r}(X_lu)$ replaced by $A^-_{k,r}(X_lu)$.
\item It can be observed from the proof that Lemma \ref{lem:cacci:k} holds without the condition \eqref{comble'}, for the case of $p\geq 2$ and for this range, it is possible to prove the $C^{1,\alpha}$ regularity directly following the direction of the classical result of DiBenedetto \cite{Dib}, without using anything related to the truncation from the previous subsections. However, for the full range $1<p<\infty$,  
the application of Lemma \ref{lem:cacci:k} shall be made in the following whenever Corollary \ref{prop:case1} ensures the condition \eqref{comble'}. 
\end{enumerate}
\end{Rem}
Le us fix $B_{r_0}\subset \Om$. For any $0<r<r_0$, similarly as \eqref{def:mu}, let us denote 
\[ \omega_l(r)=\osc_{B_r} X_lu, \quad \omega(r)=\max_{1\le l\le m}\omega_l(r),\]
and it is clear that we have $\omega(r)\le 2\mu(r)$. 
The following oscillation Lemma is a consequence of Lemma \ref{lem:cacci:k} and Corollary \ref{prop:case1}. Its proof is the same as \cite[Theorem 4.3]{Muk-Zhong} and quite similar to those in \cite{Dib, Tolk}, etc. Nevertheless, we provide a brief outline for sake of completeness. 
\begin{Lem}\label{lem:hold}
There exists a constant $s=s(N,p,L)\ge 0$ such that
for every $0<r\leq r_0/16$, we have the following, 
\begin{equation}\label{mur4r}
\omega(r) \le (1-2^{-s})\omega(8r) + 2^s\big(\delta +\mu(r_0)^2\big)^\frac{1}{ 2}\left(\frac{r}{r_0}\right)^\alpha,
\end{equation}
where $\alpha = 1/2$ when $1<p<2$ and $\alpha = 1/p$ when $p\geq 2$. 
\end{Lem}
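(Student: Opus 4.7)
The plan is to reduce Lemma \ref{lem:hold} to a classical De Giorgi class oscillation decay by first using Corollary \ref{prop:case1} to obtain a non-degeneracy lower bound on $|\X u|$ in a smaller ball, then feeding this into Lemma \ref{lem:cacci:k}. This follows the scheme of \cite[Theorem 4.3]{Muk-Zhong} in the Heisenberg setting and ultimately that of \cite{Dib,Tolk}.

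First I would dispose of the trivial regime: if $\mu(8r) \le 2^{s}(\delta+\mu(r_0)^2)^{1/2}(r/r_0)^{\alpha}$ for a sufficiently large $s=s(N,p,L)$ to be fixed below, then from $\omega(r) \le 2\mu(r) \le 2\mu(8r)$ the conclusion \eqref{mur4r} holds at once. I may therefore assume the opposite inequality, so that $\mu(8r)$ dominates the scale term and the task is a purely geometric oscillation reduction $\omega(r) \le (1-2^{-s})\omega(8r)$.

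Next I fix an index $l$ achieving $\sup_{B_{8r}} |X_l u| = \mu(8r)$ and, after replacing $u$ by $-u$ if necessary, assume $\sup_{B_{8r}} X_l u = \mu(8r)$. I then test condition \eqref{condition1} for this index on $B_{8r}$ and, symmetrically, \eqref{condition2} for $-X_l u$. If either holds, Corollary \ref{prop:case1} furnishes the pointwise bound $|X_l u| \ge 3\mu(8r)/16$ on $B_{4r}$, hence the crucial non-degeneracy $|\X u| \ge 3\mu(8r)/16$ throughout $B_{4r}$. On $B_{4r}$ the weight $\F(|\X u|)$ is then comparable to $\F(\mu(8r))$ up to constants depending only on $p$, so dividing the inequality of Lemma \ref{lem:cacci:k} (together with its negative-part counterpart from Remark \ref{rem:-version}) through by this weight yields, for every index $j$ and every level $k$, the standard De Giorgi class inequality
\begin{equation*}
\int_{B_{\rho'}}|\X(X_j u - k)^{\pm}|^2\,dx \le \frac{\gamma}{(\rho - \rho')^2}\int_{B_\rho}|(X_j u - k)^{\pm}|^2\,dx + \chi^2 |A^{\pm}_{k,\rho}(X_j u)|^{1 - 2/Q + \epsilon}
\end{equation*}
on concentric $B_{\rho'} \subset B_\rho \subset B_{4r}$, with $\chi^2 \le C\, r_0^{-2}\mu(r_0)^2\, \F(\mu(r_0))/\F(\mu(8r))$ and $\epsilon = 2/Q - 2/q$ after choosing $q$ slightly larger than $Q$. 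Thus $X_j u \in DG(B_{4r})$ for every $j$, and the classical De Giorgi oscillation decay yields $\osc_{B_r} X_j u \le (1-2^{-s})\osc_{B_{4r}} X_j u + C\,\chi\, r^{\epsilon Q/2}$ for some $s = s(N,p,L)$. Maximizing over $j$ and recalling the dichotomy gives \eqref{mur4r}.

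The main obstacle is the residual case in which neither \eqref{condition1} nor \eqref{condition2} applies for the chosen index: here $X_l u$ must take values both above $\mu(8r)/4$ and below $-\mu(8r)/4$ on sets of measure at least $\theta|B_{8r}|$, so $\omega_l(8r) \ge \mu(8r)/2$ and the reduction cannot be read off from a pointwise bound. In this situation one must combine the truncation gradient estimate of Lemma \ref{lem:main} (iterated as in Corollary \ref{prop:case1}) with the measure-theoretic oscillation information to extract a geometric decay, carefully tracking the constants so that the same $s$ and the same $\alpha$ work in both branches. A secondary difficulty is the singular range $1 < p < 2$ in which $\F$ blows up as $|\X u| \to 0$: it is precisely the pointwise non-degeneracy from Corollary \ref{prop:case1} that tames the ratio $\F(\mu(r_0))/\F(\mu(8r))$ appearing in $\chi$, and the final exponent $\alpha$ splits into $1/p$ for $p \ge 2$ (via $\F(\mu)\mu^2 \le (\delta+\mu^2)^{p/2}$) and $1/2$ for $1 < p < 2$ (via $\F(\mu)\mu^2 \le \delta+\mu^2$), which reflects the balancing of $\chi\, r^{\epsilon Q/2}$ against $(\delta+\mu(r_0)^2)^{1/2}(r/r_0)^{\alpha}$.
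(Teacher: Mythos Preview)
Your outline tracks the paper's strategy through Case~1 (where Corollary~\ref{prop:case1} furnishes pointwise non-degeneracy and Lemma~\ref{lem:cacci:k} then gives full De~Giorgi class membership for every $X_j u$), but your handling of the residual case has a genuine gap, and your dichotomy is set up slightly wrongly.

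First, the dichotomy in the paper is not over a single maximal index but over \emph{all} indices: Case~1 is that \emph{some} $l$ satisfies \eqref{condition1} or \eqref{condition2} on $B_{4r}$; Case~2 is the complement, so \emph{every} $i$ satisfies both measure conditions in \eqref{case2 1}. Testing only the maximal index, as you do, leaves you in a residual case that is strictly larger than Case~2 and for which you have no information about the other components.

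Second, and more importantly, you miss the key observation that drives Case~2. You do \emph{not} need to return to Lemma~\ref{lem:main}; that lemma has already done its work via Corollary~\ref{prop:case1}. The point is that for any index $i$ and any level $k\ge \mu(8r)/4$, the non-degeneracy hypothesis \eqref{comble'} of Lemma~\ref{lem:cacci:k} holds \emph{automatically} on $A^+_{k,8r}(X_iu)$, simply because $|\X u|\ge |X_iu|>k\ge \mu(8r)/4$ there. So Lemma~\ref{lem:cacci:k} applies with $\tau=1/4$ for these restricted levels, yielding a one-sided De~Giorgi inequality for $k\ge k_0=\mu(8r)/4$. The measure condition $|\{X_iu<\mu(4r)/4\}|>\theta|B_{4r}|$ supplied by Case~2 is precisely the density input needed for the standard De~Giorgi supremum reduction (cf.\ \cite{Giu,Lady-Ural}), which gives \eqref{sup est}; the companion argument for $(X_iu-k)^-$ gives \eqref{inf est}. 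Subtracting these for each $i$ yields the oscillation decay. Your vague suggestion to ``combine the truncation gradient estimate of Lemma~\ref{lem:main} \ldots\ with the measure-theoretic oscillation information'' is not the mechanism at work here and would not close the argument.
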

\begin{proof}
 Letting $\alpha$ be as given, note that we may assume 
\begin{equation}\label{assum:mu}
\omega(r) \ge \big(\delta +\mu(r_0)^2\big)^\frac{1}{ 2}\left(\frac{r}{r_0}\right)^{\alpha},
\end{equation}
since, otherwise, \eqref{mur4r} is true with $s=0$. 
In the following, we assume that (\ref{assum:mu}) is true we divide the 
proof into two alternative cases, similarly as in \cite{Dib,Tolk, Muk-Zhong}, etc.

{\it Case 1}. For at least one index $l\in\{1,\ldots,m\}$, we have either
\begin{equation}\label{small1}
  |\{x\in B_{4r} : X_lu<\mu(4r)/4\}|
 \leq \theta |B_{4r}|\quad \text{or}\quad |\{x\in B_{4r}: X_lu>-\mu(4r)/4\}|
  \leq \theta |B_{4r}|, 
 \end{equation}
where $\theta = \theta(N,p,L)>0$ is the constant in Corollary \ref{prop:case1}. Then, from Corollary \ref{prop:case1}, 
we have
\begin{equation}\label{comparable2}
  \vert\X u\vert\ge  3\mu(2r)/16\quad \text{in }\ B_{2r},
 \end{equation}
which allows us to use Lemma \ref{lem:cacci:k} with $q = 2Q$ to obtain 
\begin{equation}\label{DG estimate}
\begin{aligned}
\int_{B_{r''}} | \X (X_iu-k)^+|^2\, dx
\,\le\, & \frac{c}{(r'-r'')^2}\int_{B_{r'}}|(X_iu-k)^+|^2\,dx \\
&+  cK \F(\mu(2r))^{-1}| A^+_{k,r'}(X_iu)|^{1-\frac{1} {Q}}
\end{aligned}
\end{equation}
for any $0<r''<r'\leq 2r,\ i\in\{1,\ldots,2n\}$ and all $k\in \R$,
where $K = r_0^{-2}
|B_{r_0}|^{1/Q}\mu(r_0)^2\F(\mu(r_0))$. This implies that $X_iu$ belongs to the De Giorgi class $DG^+(B_{2r})$ for each $i\in \{1,\ldots,m\}$ and hence, it satisfies an oscillation lemma, see \cite{DeG, Lady-Ural, Giu}, etc. from which, as in \cite{Muk-Zhong}, it is not hard to obtain \eqref{mur4r}. 

{\it Case 2}. If Case 1 does not happen, then for every $i\in\{1,\ldots,m\}$, we have   
\begin{equation}\label{case2 1}
  |\{x\in B_{4r} : X_iu<\mu(4r)/4\}|
  > \theta |B_{4r}| \quad\text{and}\quad |\{x\in B_{4r}: X_iu>-\mu(4r)/4\}|
 >\theta |B_{4r}|.
 \end{equation} 
Then, note that, on the set 
$\{x\in B_{8r}: X_iu >\mu(8r)/4 \}$, we trivially have
 \begin{equation}\label{comparable4}
 \vert \X u\vert\ge \mu(8r)/4\quad \text{in } A^+_{k,8r}(X_iu)
 \end{equation}
 for all $k\geq \mu(8r)/4$. 
Thus, we can apply Lemma \ref{lem:cacci:k} with $q = 2Q$ to conclude that 
\begin{equation}\label{DG estimate 2}
\begin{aligned}
\int_{B_{r''}} | \X (X_iu-k)^+|^2\, dx
\,\le\, &\frac{c}{(r'-r'')^2}\int_{B_{r'}}|(X_iu-k)^+|^2\,dx \\
& + c K\,\F(\mu(8r))^{-1}| A^+_{k,r'}(X_iu)|^{1-\frac{1} {Q}}
\end{aligned}
\end{equation}
where $K = r_0^{-2}
|B_{r_0}|^{1/Q}\mu(r_0)^2\F(\mu(r_0))$, 
whenever $k\geq k_0= \mu(8r)/4 $ and $0<r^{\prime\prime}<r^\prime\le 8r$. 
Then, we have standard estimates for the supremum and infimum of $X_iu$, see \cite{Giu,Lady-Ural}, etc. 
such that for some $s_1 = s_1(n,p,L)>0$ he following holds, 
\begin{equation}\label{sup est}
 \sup_{B_{2r}} X_iu \le\sup_{B_{8r}} X_i u -2^{-s_1}\big(\sup_{B_{8r}}
X_iu-\mu(8r)/4\big)+ cK^\frac{1}{2}\F(\mu(8r))^{-1/2}r^\frac{1}{2}.
\end{equation}
and from the second part of Case 2 and Remark \ref{rem:-version}, we similarly have
\begin{equation}\label{inf est}
 \inf_{B_{2r}} X_iu \ge \inf_{B_{8r}} X_iu +2^{-s_1}\big(-\inf_{B_{8r}}
X_iu-\mu(8r)/4\big) - c K^\frac{1}{2}\F(\mu(8r))^{-1/2}r^\frac{1}{2}.
\end{equation}
Combining \eqref{sup est} and \eqref{inf est} as in \cite{Muk-Zhong}, one can also get \eqref{mur4r} and the proof is finished. 
\end{proof}

From \eqref{mur4r} of Lemma \ref{lem:hold}, it is easy to prove Theorem \ref{thm:mainthm2} for $\delta>0$ by a standard iteration argument which is classical. We refer to \cite{Giu, Lady-Ural, Gil-Tru}, etc. for details. Since all the constants in the above estimates are independent of $\delta$, hence from the arguments of Remark \ref{rem:del0}, we can obtain \eqref{eq:locboundholder} also for $\delta=0$. 
Thus, the proof of Theorem \ref{thm:mainthm2} is complete.  

\subsection{Concluding Remarks}
Here are some further remarks about the method and techniques used above throughout the paper and some discussion on further generalizations. 

\begin{enumerate}
\item All the apriori estimates for the equation \eqref{eq:maineq} in Section \ref{sec:apest}, rely only on the structure conditions \eqref{eq:str} and are independent of the ``$p$'' of the $p$-Laplacian. The other arguments in the later sections can also be appropriately generalized for the regularity theory of more general classes of equations; e.g. the equation 
$\dvh \big( f(|\X u|) \X u \big)=0$ with $f$ as any doubling differentiable function on $(0,\infty)$ so that $t\mapsto tf(t)$ is monotonic, admits weak solutions in sub-elliptic Orlicz-Sobolev spaces (defined similarly as Definition \eqref{def:subesob}) which can be similarly shown to be locally $C^{1,\alpha}$ whenever $1+|\X u|f'(|\X u|)/f(|\X u|)$ is positive and finite. We refer to \cite{Muk0, Muk1} for the case of the Heisenberg group.  \\

\item In the special case when the vector fields are left invariant with respect to a Carnot group of step $2$ 
(in particular, the Heisenberg group as in \cite{Zhong, Muk-Zhong}), the commutation relations are more special than \eqref{eq:comm} as mentioned earlier in Remark \ref{rem:carnot2}, up to the choice of exponential coordinates. This is why, in that case the estimates are simpler and some of the items in the integral estimates gets suppressed, see Remark \ref{rem:groupcase2}. 
But, unlike the previous case for the Heisenberg group by Mukherjee-Zhong \cite{Muk-Zhong}, here  
the lengths of the main crucial lemmas like Lemma \ref{lem:tech} and Lemma \ref{lem:cacci:k} are drastically extended and Corollary \ref{cor:est0 XTu}, Corollary \ref{cor:tech}, etc. have to be added. Also, in \cite{Zhong}, the approximation has been accomplished by a Hilbert-Haar theory for the case of the Heisenberg group, which depends on the group law and hence does not have an easy analogue for the case of general H\"ormander vector fields (some deeper theorems and newer adaptations are needed for it). Therefore, in this paper, we have used the Riemannian approximation which is more natural and easier to use.\\

\item The generalization of the above techniques from step $2$ to any higher step does not seem to be immediately available. The inequalities \eqref{eq:XTX} and \eqref{eq:XXX} in their present form, do not lead to anything useful for the case of step $3$ or any higher step. The higher order commutators have more intricate strucures. For instance, letting $X^{(s)}_1,\ldots, X^{(s)}_k$ as vector fields corresponding 
to $s$-th order commutators, the nature of the intermediate integral estimates for $\X^{(s)} u$ is unclear. In case of a Carnot group of step $r$, i.e. $1\leq s\leq r$, 
notice that $|\X \X^{(s)} u| \neq |\X^{(s)}\X u|$ except for the cases of $s=r$ (as $X_iX^{(r)}_j=X^{(r)}_j X_i$ for the last step of the Lie algebra) and $s=1$ (from the definition of the norm even though $X_iX_j\neq X_jX_i$). This feature leads to serious difficulties in the integral estimates for the intermediary steps that do not occur in a set up where these two are the only possible alternatives, i.e. the step $2$ case with $s=1,2$. Similar features typical to vector fields of step $2$ Carnot groups are also imbibed in more general step $2$ vector fields. Lastly, we refer to \cite{Dom-Man--hh} for some results on certain special cases of vector fields in higher steps. 

\end{enumerate}

\section*{Acknowledgement} 
The authors have been supported by European Union’s Horizon 2020 research and innovation programme under the 
(GHAIA) Marie Sk\l{}odowska-Curie grant agreement No. 777822. The authors are thankful to the anonymous referee and to Juan Manfredi for a careful reading of the paper and providing valuable suggestions.


\bibliographystyle{plain}
\bibliography{MyBib}

\end{document}